\theoremstyle{plain} 
\newtheorem{theorem}{Theorem}
\newtheorem{lemma}{Lemma}
\theoremstyle{plain} 
\newtheorem{definition}{Definition}
\newtheorem{assumption}{Assumption}
\theoremstyle{plain}
\newtheorem{example}{Example}
\newtheorem{remark}{Remark}
\newcommand\numberthis{\addtocounter{equation}{1}\tag{\theequation}}
\newcommand{\alglinelabel}{%
  \addtocounter{ALC@line}{-1}
  \refstepcounter{ALC@line}
  \label
}
\def\BibTeX{{\rm B\kern-.05em{\sc i\kern-.025em b}\kern-.08em
    T\kern-.1667em\lower.7ex\hbox{E}\kern-.125emX}}
\begin{document}
\title{Bridging Batch and Streaming Estimations to System Identification under Adversarial Attacks}
\author{Jihun Kim, \IEEEmembership{Student Member, IEEE} and Javad Lavaei, \IEEEmembership{Fellow, IEEE}
\thanks{This work was supported by the U. S. Army Research Laboratory and the U. S. Army Research Office under Grant W911NF2010219, Office of Naval Research under Grant N000142412673, AFOSR, NSF, and the UC Noyce Initiative.}
\thanks{Jihun Kim and Javad Lavaei are with the Department of Industrial Engineering and Operations Research, University of California, Berkeley, CA 94720 USA (emails:  {\tt\footnotesize \{jihun.kim, lavaei\}@berkeley.edu}).} 
\thanks{A preliminary version of this paper will appear in the 64$^{\text{th}}$ IEEE Conference on Decision and Control (CDC), Rio de Janeiro, Brazil, Dec 09-12, 2025 \cite{kim2025system}. The previous version primarily discussed the recovery of the Markov parameter matrix using the $\ell_1$-norm estimator, while this journal version demonstrates the superior performance of the $\ell_2$-norm estimator in terms of both recovery time and estimation error. Furthermore, we propose a stochastic subgradient descent algorithm for streaming data, particularly for scenarios where the recovery time required for the $\ell_2$-norm estimator to achieve accurate estimation has not yet been reached.}
}
\maketitle

\begin{abstract}
System identification in modern engineering systems faces emerging challenges from unanticipated adversarial attacks beyond existing detection mechanisms. In this work, we obtain a provably accurate estimate of the Markov parameter matrix of order $k$ to identify partially observed linear systems, in which the probability of having an attack at each time is $O(1/k)$. We show that given the batch data accumulated up to time $T^*$, the $\ell_2$-norm estimator achieves an error decaying exponentially as $k$ grows. We then propose a stochastic projected subgradient descent algorithm on streaming data that produces an estimate at each time $t<T^*$, in which case the expected estimation error proves to be the larger of $O(k/\sqrt{t})$ and an exponentially decaying term in $k$. This stochastic approach illustrates how non-smooth estimators can leverage first-order methods despite lacking recursive formulas. Finally, we integrate batch and streaming estimations to recover the Hankel matrix using the appropriate estimates of the Markov parameter matrix, which enables the synthesis of a robust adaptive controller based on the estimated balanced truncated model under adversarial attacks.
\end{abstract}

\begin{IEEEkeywords}
System Identification, Partially Observed Linear Systems, Adversarial Attacks, Batch and Streaming Estimations
\end{IEEEkeywords}

\section{Introduction}
\label{sec:intro}
System identification is the process of identifying the underlying dynamics and the observation model of a dynamical system, given the input and observed output trajectories. 
Modern engineering systems, such as autonomous vehicles and smart grids, are often so complex that a full analysis of the systems is infeasible, and their dynamics and observation model may be partially or completely unknown.
In this case, controlling the system without knowledge of the true system often leads to suboptimal policies and poor performance, particularly when the controllers are designed based on the estimated system. 

Emerging challenges to accurate system identification often lie in the adversarial nature of disturbances injected into the dynamics. The formulation of disturbances as benign—particularly with zero mean and independence over time—disregards temporal correlations, history-dependent drifts, and adversarial spikes in disturbances. Existing approaches for these benign disturbances include Dynamic Programming (DP) \cite{bertsekas1995dynamic, kim2024landscape}, Kalman filtering \cite{kalman1960new, kalman1961new}, and statistical learning such as the least-squares method \cite{simchowitz2018learningwithout,   sarkar2019near,jedra2020finite}. However, real-world scenarios may involve adversarial—rather than independent—disturbances and nonzero-mean—rather than zero-mean—disturbances. In this paper, we refer to such adversarial disturbances as an \textit{adversarial attack} since the adversary can model them maliciously by leveraging all historical information. In particular, these disturbances fall within the scope of robust system identification and can be addressed via non-smooth estimators such as the $\ell_2$-norm estimator \cite{yalcin2024exact, zhang2024exact} or the $\ell_1$-norm estimator \cite{kim2024prevailing, kim2025sharp}.

In particular, this paper studies the identification of the input-output mapping of a partially observed linear system under adversarial attacks; \textit{i.e.},  it aims to recover the Markov parameter matrix of order $k$, which consists of the first $k$ Markov parameters that map the $k$ most recent inputs to the observation at each time. 
To focus on the effect of attacks on the system, we \textit{isolate the effect of attacks and noise} by assuming that observation noise is zero and adversarial attacks occur infrequently, while their magnitudes can be \textit{arbitrarily large}. This assumption can be justified via a key application domain: cyber-physical systems, such as power grids, transport networks, or autonomous vehicles. They are already equipped with advanced security mechanisms \cite{pasqualetti2013cyber}. Cyberattacks can broadly be classified as detectable or undetectable, where most attacks are detectable by well-designed detectors and are effectively nullified or corrected by controllers, and thus the system dynamics is not corrupted (see literature on the correction of the states under observation noise \cite{candes2005decoding, fawzi2014secure, shoukry2016event, pajic2017attack, li2023disturbance}).  However, undetectable attacks that are far less frequent than detectable attacks 
 occur when a strong adversary leverages complete knowledge of the system to craft a sophisticated and malicious attack. These so-called \textit{stealthy} attacks slip through the system while remain unnoticed.

 The preliminary version \cite{kim2025system} of this paper focused on the $\ell_1$-norm estimator to recover the Markov parameter matrix of order $k$ when the probability of having an attack at each time is $O(1/k)$. In this journal paper, we will show that the $\ell_2$-norm estimator is not only capable of identifying the Markov parameter matrix with the \textit{batch data}— where one waits until a sufficient amount of data has been collected before applying the estimator—but also outperforms the $\ell_1$-norm estimator. Note that in a typical non-asymptotic analysis of estimators using the batch data, the goal is to show the existence of a finite time $T^*$ such that a favorable property holds for all times $t\geq T^*$, which we prove to be the case for the adopted $\ell_2$-norm estimator. However, this analysis does not provide any guarantee at times $t<T^*$. Intuitively, the threshold $T^*$ implies that the signal in the data will sufficiently be accumulated after the threshold to ensure that a certain mathematical condition to hold. Thus, the estimates before $T^*$ may show an unstable behavior, which necessitates an alternative method to recover estimates before we arrive at time $T^*$.  
 
To provide a convergence guarantee to identify the Markov parameter matrix also with the \textit{streaming data} for $t<T^*$, meaning that the estimate is sequentially updated as each new data arrives in contrast to the batch setting, we propose a projected subgradient descent algorithm using the subgradient of the $\ell_2$-norm estimator. 
There have been approaches to learning dynamical systems via the first-order method \cite{hardt2018gradient, oymak2019stochastic, jain2021sgdrer} to streaming data in different ways, while they all focused on using the gradient of the smooth least-squares objective. 
We note that the work \cite{yalcin2025subgradient} leveraged classical subgradient descent method for non-smooth objectives (see \cite{boyd2004convex, nesterov2004intro}) in fully observed systems to establish guarantees on the estimation error, but only after $t\geq T^*$. The proposed streaming estimation approach overcomes this limitation and provides convergence guarantees on the estimation error of the Markov parameter matrix for all $t$ when little is known about this matrix.
Furthermore, we use a stochastic subgradient descent algorithm \cite{bubeck2015convex, bottou2018optimization}, which computes a subgradient of the $\ell_2$-norm term only regarding the most recent observation and thus provides an unbiased approximation of the full subgradient of the $\ell_2$-norm estimator at time $t$, thereby greatly reducing computational complexity while still guaranteeing convergence in expectation.

\textbf{Contribution and Outline.} 
In Section \ref{sec:probform}, we formulate our problem. We summarize the overall structure and the contribution of each subsequent section as follows:
\begin{enumerate}[leftmargin=1cm]
    \item 
   In Section \ref{sec:l2est}, we demonstrate that the $\ell_2$-norm estimator outperforms the $\ell_1$-norm estimator in terms of both recovery time and estimation error. Specifically, to identify the Markov parameter matrix of order $k$, when the attack probability at a given time is smaller than $\frac{1}{2(k-1)}$,    
   the $\ell_1$-norm estimator achieves the error of $O(\sqrt{r}\cdot \rho^k)$,
    whereas the $\ell_2$-norm estimator further improves the error to $O(\rho^k)$, where  $r$ is the dimension of observations and $0\leq \rho<1$ is a contraction factor of the true system. Thus, the estimation error decays exponentially as $k$ grows.
    Similarly, the recovery time $T^*$ for the $\ell_2$-norm estimator no longer depends on $r$, unlike the recovery time for the $\ell_1$-norm estimator. 
    \item While the $\ell_2$-norm estimator achieves $O(\rho^k)$ after the finite time $T^*$, it does not provide the error achievable at times $t<T^*$. For this purpose, in Section \ref{sec:subg}, we leverage a  stochastic projected subgradient descent with the subgradient of the $\ell_2$-norm term only regarding the most recent observation. 
 We demonstrate that the estimation error decreases progressively with time $t$ in expectation, which was not guaranteed in the naive $\ell_2$-norm estimator. In particular, the estimation error is given by the larger of a polynomially decaying term in $t$ and an exponentially decaying term in $k$;
 we achieve an error of $O(\max\{k/\sqrt{t},~k^{1/4}\rho^{k/2}\})$  at time $t$ when little is known about the true Markov parameter matrix. If prior information of the matrix is available, the polynomial term can be improved to an exponentially decaying term in $t$, yielding $O(\max\{\exp(-t/k^2),\rho^k\})$.
\item In Section \ref{sec:framework}, we finally provide the estimation error of the Hankel matrix using the  stochastic projected subgradient descent algorithm (streaming estimation) when $t<T^*$, and the $\ell_2$-norm estimator  (batch estimation) when $t\geq T^*$. Bridging batch and streaming estimation approaches allows us to construct an appropriately estimated balanced truncated model at all time steps and synthesize the corresponding robust adaptive controller in the presence of adversarial attacks.
\end{enumerate}
In Section \ref{sec:numexp}, we present numerical experiments to support our theoretical findings. Finally, concluding remarks are provided in Section \ref{sec:conclusion}.

\textbf{Notation.} Let $\mathbb{R}^n$ denote the set of $n$-dimensional vectors and $\mathbb{R}^{n\times n}$ denote the set of $n\times n$ matrices. For a matrix $A$, $\|A\|_2$ denotes the spectral norm and $\|A\|_F$ denotes the Frobenius norm of the matrix. 
Let $A^T$ denote the transpose of the matrix. Let $I_n$ denote the identity matrix in $\mathbb{R}^{n\times n}$.
For matrices $A$ and $B$, $\langle A, B  \rangle$ denotes the Frobenius inner product.
$\|x\|_2$ denotes the $\ell_2$-norm of the vector. 
Let $\mathbb{E}$ denote the expectation operator.  
For an event $\mathcal{E}$, $\mathbb{P}(\mathcal{E})$ denotes the probability of the event, and the function $\mathbb{I}\{\mathcal{E}\}$ equals 1 if the event $\mathcal{E}$ occurs and 0 otherwise. The function
$\mathbb{I}_{\pm}\{E\}$ equals $1$ if $E$ occurs and $-1$ otherwise.
We use $\Theta(\cdot)$ for the big-$\Theta$ notation, $O(\cdot)$ for the big-$O$ notation, and $\Omega(\cdot)$ for the big-$\Omega$ notation.
Let $N(\mu,\Sigma)$ denote the Gaussian distribution with mean $\mu$ and covariance $\Sigma$. Let $\mathbb{S}^{n-1}$ denote the set $\{y\in\mathbb{R}^n : \|y\|_2=1\}$.
Finally, for a sub-Gaussian scalar variable $X$, let $\|X\|_{\psi_2}:= \inf\{t>0: \mathbb{E}[e^{X^2/t^2}]\leq 2\}.$

\section{Problem Formulation}\label{sec:probform}

Consider a linear time-invariant  system represented by:
\begin{align*}
    x_{t+1} &= A x_t + Bu_t+w_t,\numberthis\label{sysd}\\y_t&= Cx_t + Du_t, \hspace{10mm}t=0,1,\dots,
\end{align*}
where $A\in\mathbb{R}^{n \times n}, B\in\mathbb{R}^{n \times m}, C\in\mathbb{R}^{r \times n}, D\in\mathbb{R}^{r \times m}$ are unknown system matrices, $x_t\in\mathbb{R}^n$ is the state, $u_t\in \mathbb{R}^m$ is the control input, and $y_t \in \mathbb{R}^r$ is the observation at time $t$.
$w_t\in \mathbb{R}^n$ is the attack injected into the system at time $t$ which occasionally takes a nonzero value. We assume that the attack times are selected with probability $p$, and 
$w_t$ is identically zero when the system is not under attack. We allow $w_t$ to be completely arbitrarily chosen by an adversary  at the attack times. 

Our goal is to identify the behavior of the system transferred from the control inputs $u_t, u_{t-1}, \dots, u_0$ to the output observation $y_t$, under partial observability. The mapping from a series of inputs to the observation is represented by the transfer function $C(zI-A)^{-1}B+D$, a function involving the coefficients $CB$, $CAB$, $CA^2 B$, and so forth. To this end, the following notion of a Markov parameter matrix represents the mapping of order $k$, which establishes the relationship from $u_t, \dots, u_{t-k+1}$—the most recent $k$ inputs—to $y_t$.

\begin{definition}[Markov parameter matrix of order $k$]\label{markov}
 ~ From the true system $(A,B,C,D)$, the Markov parameter matrix required to recover the input-output mapping is denoted as $G^*$ and defined by 
    \begin{equation}\label{gstar}
        G^* = [D~CB~ CAB~ \cdots ~ CA^{k-2}B].
    \end{equation}
    Note that the size of $G^*$ depends on $k$; however, we omit the explicit dependence on $k$ in the notation for simplicity.
\end{definition}

By designing the control inputs $u_0, u_1, \dots$ and given the associated observation trajectory $y_0, y_1,\dots$,
our goal is to accurately approximate the input–output mapping $G^*$; to identify a Markov parameter matrix of order $k$. To this end, consider the following relationship from $u_t,\dots, u_{t-k+1}$ to $y_t$ derived from \eqref{sysd}:
    \begin{subequations}\label{relationship}
         \begin{align}
        y_t &= G^*\cdot [u_t^T~ u_{t-1}^T ~ \cdots~ u_{t-k+1}^T]^T \numberthis\label{firstline}\\ &\hspace{4mm}+ [C~ CA ~ \cdots ~ CA^{k-2}] \cdot[w_{t-1}^T~ \cdots ~ w_{t-k+1}^T]^T\numberthis\label{secondline}\\&\hspace{4mm}+ CA^{k-1}x_{t-k+1}. \numberthis\label{thirdline}
    \end{align}
    \end{subequations}
For the sake of convenience, we define
\begin{align}\label{defv}
    v_t := [C~ CA ~ \cdots ~ CA^{k-2}] \cdot[w_{t-1}^T~ \cdots ~ w_{t-k+1}^T]^T
\end{align}
Now, consider the following convex optimization problem with the $\ell_2$-norm estimator, given $T$ observation samples $y_{k-1},\dots, y_{T+k-2}$ and the control inputs $u_0, \dots, u_{T+k-2}$:
   \begin{equation}\label{l2est}
      \min_{G\in \mathbb{R}^{r\times mk}} \sum_{t=k-1}^{T+k-2}\|y_t-G\mathbf{U}_t^{(k)}\|_2,
   \end{equation}
   where $\mathbf{U}_t^{(k)} = [u_t^T ~ u_{t-1}^T ~ \cdots ~ u_{t-k+1}^T]^T$ and let $\hat G^*_T$ denote a minimizer of \eqref{l2est}.  

We will now present the assumptions required for an accurate estimation. To ensure that the estimation error $\|G^* - \hat G^*_T\|_F$ is sufficiently small, we impose the following conditions to prevent unbounded growth of the system states.

\begin{assumption}[System Stability]\label{stability}
There exist $\psi>0$ and $0\leq \rho<1$ such that 
$\|A^t\|_2\leq \psi\cdot \rho^t$ for all $t\geq 0$. 
\end{assumption}

\begin{assumption}[Sub-Gaussian norm]\label{maxnorm}
 Define a filtration $\mathcal{F}_t = \bm{\sigma}\{x_0, w_0, \dots, w_{t-1}\}$.
    There exists $\eta >0$ such that $\|x_0\|_{\psi_2} \leq \eta$ and 
    $\|w_t\|_{\psi_2}\leq \eta$ conditioned on $\mathcal{F}_t$ for all $t\geq 0$.
\end{assumption}

Note that Assumption \ref{stability} incorporates the standard system stability condition $\rho(A) < 1$, where $\rho(A)$ is the spectral radius of $A$. This follows from Gelfand's formula, $\Phi(A)=\sup_{t\geq 0} \frac{\|A^t\|_2}{\rho(A)^{t}}$, whose upper bound only depends on the system order $n$. Assumption \ref{maxnorm} requires each attack to be sub-Gaussian; this is automatically satisfied for any bounded attacks, which is reasonable since real-world actuators cannot take arbitrarily large inputs. We direct readers to Section II in \cite{kim2025system} to review the preliminaries regarding sub-Gaussian variables. 

The next assumption provides a criterion that enables us to identify the Markov parameter matrix of order $k$. 

\begin{assumption}[Probabilistic Attack]\label{null}
 $w_t$ is an attack at each time $t$ with probability $p<\frac{1}{2(k-1)}$ conditioned on $\mathcal{F}_t$; \textit{i.e.}, there exists a sequence $(\xi_t)_{t\geq 0}$ of independent $\text{Bernoulli}(p)$ variables, each independent of any $\mathcal{F}_t$, such that 
    \begin{equation}\label{include}
    \{\xi_t = 0\} \subseteq \{w_t=0\}, \quad \forall t\geq 0.
    \end{equation}
\end{assumption}

Assumption \ref{null} implies that the system is not under attack ($w_t=0$) at time $t$ if $\xi_t=0$. Note that $k$ can be selected \textit{independently} of the system order $n$, and thus the attack probability can be chosen without dependence on $n$.

Meanwhile, we can additionally infer a lower bound on the probability that $v_t=0$, since $v_t$ is exactly zero when $w_{t-1}, \dots, w_{t-k+1}$ are  zero (see \eqref{defv}). In other words, we have \begin{align*}\mathbb{P}(v_t = 0&) \geq \mathbb{P}(w_{t-1} = 0 , \dots, w_{t-k+1} = 0) \\&\geq \mathbb{P}(\xi_{t-1} = 0 , \dots, \xi_{t-k+1} = 0) \geq (1-p)^{k-1} > \frac{1}{2},\end{align*} 
where the second and third inequalities follow from \eqref{include} and the independence of $\xi_t$'s, and the final inequality holds
under $p<\frac{1}{2(k-1)}$. In Theorem \ref{theorem1}, the inequality $1-(1-p)^{k-1}<\frac{1}{2}$ will play a crucial role in the accurate identification of the Markov parameter matrix.

\section{Batch Estimation: Identification with the $\ell_2$-norm estimator}\label{sec:l2est}

In this section, we provide a non-asymptotic analysis of the $\ell_2$-norm estimator using the \textit{batch data} of trajectory length $T$.
Previously in \cite{kim2025system}, we proved that the $\ell_1$-norm estimator achieves an error of $O(\sqrt{r}\cdot \rho^k)$, which exponentially decreases with $k$, after a finite time $T= \Omega(k^2 m+ k \log(\frac{r}{\delta}))$ with probability at least $1-\delta$. In this paper, we restate the claim with the $\ell_2$-norm estimator, and will prove that the recovery time can be slightly reduced by removing the logarithmic dependence on $r$; \textit{i.e.}, $T=\Omega(k^2 m+ k \log(\frac{1}{\delta}))$. Moreover, the $\ell_2$-norm estimator provides a better final estimation error than the $\ell_1$-norm estimator since we mitigate the multiplying factor $\sqrt{r}$ and achieve an error of $O(\rho^k)$. We highlight that these results imply that the performance of the proposed $\ell_2$-norm estimator does not rely on the dimension of observations $r$.
To the best of our knowledge, this is the first result in the literature where the recovery time or the estimation error does not scale with the dimension of observations $r$, whereas existing works \cite{sarkar2021finite, oymak2022revisit} establish a dependence on $r$.

We first introduce some useful lemmas that leverage the Gaussianity of control inputs and sub-Gaussianity of attacks. 

\begin{lemma}[Gaussian Concentration Lemma]\label{gauscon}
    For an $M$-dimensional random vector $\mathbf{U}\sim N(\mu, \sigma^2 I_M)$ and the function $f$ having a Lipschitz constant of $L$, we have 
    \begin{align*}
        \|f(\mathbf{U}) - \mathbb{E}[f(\mathbf{U})]\|_{\psi_2} =O(\sigma L). 
    \end{align*}
\end{lemma}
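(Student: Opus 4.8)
The plan is to reduce the statement to the classical Gaussian concentration inequality for Lipschitz functions of a standard normal vector, and then translate the resulting sub-Gaussian tail into a bound on the $\psi_2$-norm. First I would write $\mathbf{U} = \mu + \sigma\mathbf{Z}$ with $\mathbf{Z}\sim N(0, I_M)$ and set $g(\mathbf{z}) := f(\mu + \sigma\mathbf{z})$; since $f$ is $L$-Lipschitz, $g$ is $\sigma L$-Lipschitz, and $f(\mathbf{U}) - \mathbb{E}[f(\mathbf{U})] = g(\mathbf{Z}) - \mathbb{E}[g(\mathbf{Z})]$. Finiteness of $\mathbb{E}[g(\mathbf{Z})]$ follows because $|g(\mathbf{z})| \le |g(0)| + \sigma L\|\mathbf{z}\|_2$ has a Gaussian-integrable majorant. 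Thus it suffices to show that for any $1$-Lipschitz $h$ and $\mathbf{Z}\sim N(0,I_M)$ one has $\|h(\mathbf{Z}) - \mathbb{E}[h(\mathbf{Z})]\|_{\psi_2} \le C$ for an absolute constant $C$; applying this with $h = g/(\sigma L)$ gives the claim.

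Second, I would establish the concentration bound $\mathbb{P}(|h(\mathbf{Z}) - \mathbb{E}[h(\mathbf{Z})]| \ge s) \le 2\exp(-s^2/2)$. The cleanest route is the Herbst argument via the Gaussian logarithmic Sobolev inequality: for smooth $h$ with $\|\nabla h\|_2 \le 1$, applying log-Sobolev to $e^{\lambda h}$ yields a differential inequality for $\Lambda(\lambda) := \log\mathbb{E}[e^{\lambda(h(\mathbf{Z}) - \mathbb{E}h(\mathbf{Z}))}]$ whose integration gives $\Lambda(\lambda) \le \lambda^2/2$ for all $\lambda \in \mathbb{R}$; a Chernoff bound then produces the stated tail. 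To handle a merely Lipschitz (hence a.e.-differentiable, by Rademacher) $h$, I would mollify, replacing $h$ by $h_\varepsilon := h * \varphi_\varepsilon$ with a Gaussian mollifier $\varphi_\varepsilon$, observe that $h_\varepsilon$ is smooth with $\|\nabla h_\varepsilon\|_2 \le 1$ and $h_\varepsilon \to h$ pointwise as $\varepsilon \to 0$, apply the smooth bound, and pass to the limit using dominated convergence in both the MGF and the expectation.

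Finally, I would convert the two-sided sub-Gaussian tail bound into a $\psi_2$-norm estimate. By the standard equivalence between sub-Gaussian tail decay and the Orlicz norm $\|\cdot\|_{\psi_2}$ defined in the notation section—integrate the tail bound against $d(e^{u^2/t^2})$, or equivalently bound the moments $\mathbb{E}|X|^q \le (Cq)^{q/2}$ and invoke the moment characterization—a bound of the form $\mathbb{P}(|X| \ge s) \le 2e^{-s^2/2}$ implies $\|X\|_{\psi_2} \le C_0$ for an absolute constant $C_0$. Rescaling back gives $\|f(\mathbf{U}) - \mathbb{E}[f(\mathbf{U})]\|_{\psi_2} = \sigma L\,\|h(\mathbf{Z}) - \mathbb{E}[h(\mathbf{Z})]\|_{\psi_2} \le C_0\,\sigma L = O(\sigma L)$.

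The main obstacle I anticipate is the non-smoothness of $f$: the log-Sobolev/Herbst argument is naturally stated for differentiable functions, so the mollification step—verifying that convolution with a Gaussian kernel preserves the Lipschitz constant, yields smoothness, and permits passing to the limit—needs to be carried out carefully. Everything else is either a routine change of variables or an invocation of standard sub-Gaussian norm equivalences. Alternatively, one could sidestep mollification entirely by citing the Gaussian concentration inequality (the Borell--Tsirelson--Ibragimov--Sudakov inequality) as a black box, in which case only the reduction and the tail-to-$\psi_2$ conversion remain.
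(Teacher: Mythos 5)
Your proposal is correct and matches the paper's proof: the paper performs exactly the same affine change of variables $\mathbf{U}=\mu+\sigma X$ to reduce to a $\sigma L$-Lipschitz function of a standard Gaussian and then cites the Gaussian concentration theorem (Theorem 5.2.3 in \cite{vershynin2025high}) as a black box, which is the shortcut you mention in your final paragraph. The additional Herbst/log-Sobolev and mollification material you sketch is a valid self-contained proof of that cited theorem but is not needed here.
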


\begin{lemma}\label{sumxt}
    Suppose that Assumptions \ref{stability} and \ref{maxnorm} hold. Let the control inputs $\{u_t\}_{t=0}^{T+k-2}$ be independent with $u_t\sim N(0, \sigma^2 I_m)$. Then, we have
   \begin{align}\label{expxt}
       \mathbb{E}[\|x_t\|_2^2] =O\biggr(\Bigr(\frac{ \eta+\sigma \sqrt{m} \|B\|_2}{1-\rho} \Bigr)^2\biggr), \quad\forall t\geq 0.
   \end{align}
   Moreover, given $\delta\in(0,1]$, when $T= \Omega(\log(\frac{1}{\delta }))$,
    \begin{equation}\label{23}
        \sum_{t=0}^{T-1} \|x_t\|_2 =O\Bigr(\frac{(\eta+\sigma \sqrt{m} \|B\|_2)T}{1-\rho}\Bigr)
    \end{equation}
    holds with probability at least $1-\delta$. 
\end{lemma}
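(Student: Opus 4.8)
The plan is to bound $\|x_t\|_2$ by unrolling the recursion \eqref{sysd} and controlling the resulting sums of stable-matrix powers times Gaussian inputs and sub-Gaussian attacks. First I would write $x_t = A^t x_0 + \sum_{s=0}^{t-1} A^{t-1-s}(Bu_s + w_s)$, and then apply the triangle inequality together with Assumption \ref{stability} to get $\|x_t\|_2 \le \psi\rho^t\|x_0\|_2 + \psi\|B\|_2\sum_{s=0}^{t-1}\rho^{t-1-s}\|u_s\|_2 + \psi\sum_{s=0}^{t-1}\rho^{t-1-s}\|w_s\|_2$. For the expectation bound \eqref{expxt}, I would square this, take expectations, and use that $\mathbb{E}[\|x_0\|_2^2]$, $\mathbb{E}[\|u_s\|_2^2] = \sigma^2 m$, and $\mathbb{E}[\|w_s\|_2^2]$ are all $O(\eta^2)$ or $O(\sigma^2 m)$ by Assumption \ref{maxnorm} and the Gaussian moment formula (a sub-Gaussian norm bound of $\eta$ on a vector controls its second moment up to a dimension-free constant factor — here one must be slightly careful, since $\|w_t\|_{\psi_2}\le\eta$ as a vector norm gives $\mathbb{E}\|w_t\|_2^2 = O(\eta^2)$). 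The geometric weights sum to $O(1/(1-\rho))$, and after a Cauchy–Schwarz step on the convolution sums (or by expanding the square and using $\sum_s\sum_{s'}\rho^{\cdots}$), the cross terms and diagonal terms all collapse to $O\big((1/(1-\rho))^2\big)$ times the per-step second moments, giving \eqref{expxt}.

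For the high-probability bound \eqref{23}, I would first swap the order of summation: $\sum_{t=0}^{T-1}\|x_t\|_2 \le \psi\|x_0\|_2\sum_{t\ge0}\rho^t + \psi\|B\|_2\sum_{s=0}^{T-2}\|u_s\|_2\big(\sum_{t>s}\rho^{t-1-s}\big) + \psi\sum_{s=0}^{T-2}\|w_s\|_2\big(\sum_{t>s}\rho^{t-1-s}\big)$, so that the whole quantity is bounded by $\frac{\psi}{1-\rho}\big(\|x_0\|_2 + \|B\|_2\sum_{s=0}^{T-2}\|u_s\|_2 + \sum_{s=0}^{T-2}\|w_s\|_2\big)$. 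It then suffices to show that $\sum_{s=0}^{T-2}\|u_s\|_2 = O(\sigma\sqrt{m}\,T)$ and $\sum_{s=0}^{T-2}\|w_s\|_2 = O(\eta T)$ each with probability $1-\delta/2$ (and $\|x_0\|_2 = O(\eta)$ is lower-order). For the inputs, $\|u_s\|_2$ is a $\sigma$-Lipschitz function of a standard Gaussian, so by Lemma \ref{gauscon} each $\|u_s\|_2 - \mathbb{E}\|u_s\|_2$ is $O(\sigma)$-sub-Gaussian, and the $u_s$ are independent; a sub-Gaussian Hoeffding / Bernstein bound on the sum of $T-1$ independent centered sub-Gaussians with parameter $O(\sigma)$ gives a deviation of $O(\sigma\sqrt{T\log(1/\delta)})$, which is $O(\sigma\sqrt{m}\,T)$ once $T = \Omega(\log(1/\delta))$ — this is exactly where the stated condition on $T$ enters, and also why the $\sqrt{m}$ in the mean $\mathbb{E}\|u_s\|_2 = \Theta(\sigma\sqrt{m})$ dominates. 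For the attacks, the $w_s$ are not independent, but they form a martingale-difference-type structure adapted to $\mathcal{F}_s$ with $\|w_s\|_{\psi_2\mid\mathcal{F}_s}\le\eta$; I would apply an Azuma-type / sub-Gaussian martingale concentration inequality to $\sum_s(\|w_s\|_2 - \mathbb{E}[\|w_s\|_2\mid\mathcal{F}_s])$ to get the same $O(\eta\sqrt{T\log(1/\delta)}) = O(\eta T)$ tail, with the conditional means each $O(\eta)$.

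The main obstacle is the attack term: unlike the Gaussian inputs, the $w_s$ are adversarial and only conditionally sub-Gaussian given the filtration $\mathcal{F}_s$, so one cannot use an i.i.d. concentration bound and must instead invoke a martingale/sub-Gaussian-increment concentration result (e.g. a conditional version of Hoeffding's inequality for sums of conditionally sub-Gaussian random variables) — and one must verify that $\|w_s\|_2$, not just the components of $w_s$, inherits the right conditional sub-Gaussian parameter (a dimension-free constant times $\eta$) from Assumption \ref{maxnorm}. A secondary, purely bookkeeping obstacle is making sure the convolution/double-sum manipulations in the second-moment computation genuinely produce a dimension-free $O((1/(1-\rho))^2)$ constant rather than something that secretly scales with $T$ or $n$; I would handle this by expanding $\mathbb{E}\|x_t\|_2^2$ directly and bounding each geometric double sum by $1/(1-\rho)^2$.
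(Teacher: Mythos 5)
Your proposal is correct and follows essentially the same route as the paper: unroll the recursion, use Assumption \ref{stability} and geometric sums to pull out the $1/(1-\rho)$ factors, swap the order of summation for the cumulative bound, and then apply sub-Gaussian concentration to the sum of $\|u_s\|_2$ and $\|w_s\|_2$ terms (handling the attacks through the filtration $\mathcal{F}_t$ rather than independence), with $T=\Omega(\log(1/\delta))$ entering exactly where you say, to make the $O(\sqrt{T\log(1/\delta)})$ deviation subordinate to the $O(T)$ mean. The only cosmetic difference is that the paper carries out both steps in terms of $\psi_2$-norms (triangle inequality for $\|\cdot\|_{\psi_2}$, then $\mathbb{E}[X^2]=O(\|X\|_{\psi_2}^2)$ and a single centering-plus-Hoeffding step for the whole sum) rather than your explicit moment expansion and split concentration bounds.
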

\begin{proof}
 The proofs of Lemmas \ref{gauscon} and \ref{sumxt} are provided in Appendix \ref{prooflemma1} and \ref{prooflemma2}, respectively.
\end{proof}

\medskip 

We now state the next result that is crucial for understanding that the recovery time of the $\ell_2$-norm estimator does not depend on the dimension of the observations. In other words, taking multiple observations at each time (thereby enlarging a Markov parameter matrix) does not worsen the recovery time since it is essentially equivalent to considering a single observation at each time. To this end, the following lemma shows that the worst-case recovery of the Markov parameter matrix occurs when the difference between the true and estimated matrices is rank-1.

\begin{lemma}\label{minrank1}
   Suppose that $\mathbf{U}\sim N(0,\sigma^2 I_{m (T+k-1)})$.    
    When $T \geq mk$, 
     the function $ \sum_{t=k-1}^{T+k-2} \mathbb{I}_{\pm}\{v_t=0\}\cdot \|Z\mathbf{U}_t^{(k)}\|_2$  minimized over the set $\{Z\in \mathbb{R}^{r\times mk}: \|Z\|_F=1\}$ attains its minimum at a rank-1 matrix with probability 1. 
\end{lemma}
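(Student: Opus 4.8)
The plan is to show that the objective function $\phi(Z) := \sum_{t=k-1}^{T+k-2} \mathbb{I}_{\pm}\{v_t=0\}\cdot \|Z\mathbf{U}_t^{(k)}\|_2$ restricted to the unit Frobenius sphere achieves its minimum at a rank-1 matrix. The key structural observation is that each term $\|Z\mathbf{U}_t^{(k)}\|_2$ depends on $Z$ only through the product $Z\mathbf{U}_t^{(k)} \in \mathbb{R}^r$, i.e., only through how $Z$ acts on the single vector $\mathbf{U}_t^{(k)} \in \mathbb{R}^{mk}$. So I would decompose $\mathbb{R}^{mk}$ relative to the data: write $\mathbf{u}_t := \mathbf{U}_t^{(k)}$ and let $\mathcal{U} := \mathrm{span}\{\mathbf{u}_{k-1},\dots,\mathbf{u}_{T+k-2}\}$. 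When $T \geq mk$, the Gaussianity of $\mathbf{U}$ guarantees with probability 1 that these vectors span all of $\mathbb{R}^{mk}$, so $\mathcal{U} = \mathbb{R}^{mk}$ — but more importantly, generic position means the relevant analysis is about the row space of $Z$.

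**Next I would** exploit the following: for any $Z$, only the restriction of $Z$ to (the span of) the data vectors matters, and since $Z$ has only $r$ rows, the image $\{Z\mathbf{u} : \mathbf{u}\in\mathbb{R}^{mk}\}$ is a subspace of $\mathbb{R}^r$ of dimension $\mathrm{rank}(Z) =: \rho_0$. Write the SVD $Z = \sum_{i=1}^{\rho_0} \sigma_i a_i b_i^T$ with $\{a_i\}$ orthonormal in $\mathbb{R}^r$, $\{b_i\}$ orthonormal in $\mathbb{R}^{mk}$, $\sigma_i > 0$, and $\sum \sigma_i^2 = 1$. Then $Z\mathbf{u}_t = \sum_i \sigma_i (b_i^T\mathbf{u}_t) a_i$, so $\|Z\mathbf{u}_t\|_2 = \bigl(\sum_i \sigma_i^2 (b_i^T\mathbf{u}_t)^2\bigr)^{1/2}$. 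Fixing the singular vectors $\{a_i\},\{b_i\}$ and viewing the objective as a function of $(\sigma_1^2,\dots,\sigma_{\rho_0}^2)$ on the simplex $\{\sigma_i^2 \geq 0, \sum\sigma_i^2 = 1\}$, each term $\|Z\mathbf{u}_t\|_2 = \bigl(\sum_i \sigma_i^2 c_{t,i}\bigr)^{1/2}$ with $c_{t,i} = (b_i^T\mathbf{u}_t)^2 \geq 0$ is a \emph{concave} function of the vector $(\sigma_i^2)_i$ (square root of a linear function). Hence $\phi(Z)$, as a sum — with signs $\pm 1$ — of such terms, is a difference of two concave functions, but the \emph{negative} part (from $v_t\neq 0$ terms) is concave negated, i.e. convex. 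I need the full sum to be concave to push the minimum to a vertex; this is the subtle point.

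**The main obstacle** is precisely the presence of the $-1$ signs: $\phi$ is not globally concave in $(\sigma_i^2)_i$. I would handle this by a more careful argument: rather than minimizing over all of $Z$ at once, use the fact that a minimizer $Z^\star$ exists (continuous function on a compact set) and argue by contradiction that $\mathrm{rank}(Z^\star) = \rho_0 \geq 2$ leads to a strictly smaller or equal value at a rank-1 point, with the "equal" case handled by the probability-1 genericity. Concretely: restrict to the two-dimensional slice spanned by the top two singular directions and show that $\|Z\mathbf{u}_t\|_2$ restricted to the arc $\{(\sigma_1^2,\sigma_2^2) : \sigma_1^2+\sigma_2^2 = 1-\sum_{i\geq 3}\sigma_i^2\}$ — after fixing everything else — is, for the \emph{positive-sign} terms, strictly concave (as a function of $\sigma_1^2$), and for the negative-sign terms, is a bounded Lipschitz function. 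Since by Assumption~\ref{null} the positive-sign terms (those with $v_t=0$, occurring with probability $> 1/2$ each) strictly outnumber the negative ones in expectation — and with probability 1, given $T\geq mk$, enough of them are "active" with $c_{t,1}\neq c_{t,2}$ — the sum is strictly concave along this arc, so its minimum over the closed arc is at an endpoint, where one of $\sigma_1,\sigma_2$ vanishes, reducing the rank. Iterating drives the rank to 1. The genericity from $\mathbf{U}\sim N(0,\sigma^2 I)$ is what guarantees that, almost surely, the coefficients $c_{t,i}=(b_i^T\mathbf{u}_t)^2$ are in general position for \emph{every} choice of orthonormal $\{b_i\}$ — this uniformity over the continuum of singular bases is the technically delicate part, and I would establish it via a measure-zero argument on the finite-dimensional variety of degenerate configurations, or alternatively reduce to finitely many cases by a compactness/net argument. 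The conclusion is that the infimum over $\|Z\|_F=1$ is attained (only) at rank-1 matrices with probability 1.
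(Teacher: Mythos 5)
Your reduction to the singular values is a natural idea, but the step on which the whole argument rests --- that the restricted objective is concave along the arc in $(\sigma_1^2,\sigma_2^2)$ --- does not follow from the reasoning you give, and is false in general. Writing $s_i=\sigma_i^2$ and $c_{t,i}=(b_i^T\mathbf{u}_t)^2$, each term $\sqrt{\langle s,c_t\rangle}$ is indeed concave in $s$, so the objective is a \emph{difference} of concave functions: the $v_t=0$ terms contribute concave pieces and the $v_t\neq 0$ terms contribute convex pieces. The sign of the total curvature is governed by the \emph{magnitudes} of the second derivatives, not by how many terms carry each sign: along the slice you describe, the second derivative of a single term is $-\tfrac{(c_{t,1}-c_{t,2})^2}{4\langle s,c_t\rangle^{3/2}}$, which can be made arbitrarily large for one negative-sign term (e.g.\ when $\langle s,c_t\rangle$ is small) and arbitrarily small for all positive-sign terms simultaneously. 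Hence ``positive terms outnumber negative terms'' (which is what Assumption~3 gives, and only in expectation) does not yield concavity of the sum, and the conclusion that the minimum over the arc sits at an endpoint is unsupported. This is precisely the point you flag as ``subtle'' and then resolve by an appeal to counting; that appeal is the gap. A secondary, lesser issue is the uniformity of your genericity claim over the continuum of orthonormal bases $\{b_i\}$, which you defer to an unspecified measure-zero or net argument; this is repairable, but it is not the main problem.

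For comparison, the paper avoids curvature considerations entirely: it substitutes $S=Z^TZ$, perturbs the objective by $\epsilon$ to make it differentiable, writes the KKT conditions for the trace-constrained SDP, and observes that the stationarity matrix $\sum_t \mathbb{I}_{\pm}\{v_t=0\}\,\mathbf{u}_t\mathbf{u}_t^T/(2\sqrt{\cdot})+\lambda I$ almost surely has rank at least $mk-1$ (distinct eigenvalues of a generic Gram-type sum), so complementary slackness $YS=0$ together with Sylvester's rank inequality forces $\mathrm{rank}(S)\le 1$; the ``for all $S$'' issue is handled by a countable dense-subset union bound. If you want to keep your variational framing, you would need to replace the concavity claim with a first- or second-order optimality argument of this kind rather than a term-counting one.
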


\begin{proof}
   The minimization of the given function can alternatively be written as 
    \begin{align}\label{orig}
        \min_{S\in \mathbb{R}^{mk\times mk}} \sum_{t=k-1}^{T+k-2} \mathbb{I}_{\pm}\{v_t=0\}\cdot \sqrt{(\mathbf{U}_t^{(k)})^T S \mathbf{U}_t^{(k)}},
    \end{align}
    where $S = Z^T Z$. Note that $\|Z\|_F=1$ implies that $S = Z^T Z , ~S\succeq 0, ~ \text{trace}(S)=1$. We consider the $\epsilon$-perturbed function:
    \begin{align}\label{Skkt}
        \min_{S\in \mathbb{R}^{mk\times mk}} \sum_{t=k-1}^{T+k-2} \mathbb{I}_{\pm}\{v_t=0\}\cdot \sqrt{(\mathbf{U}_t^{(k)})^T S \mathbf{U}_t^{(k)}+\epsilon},
    \end{align}
    where $\epsilon >0$. 
    We establish a Lagrangian function for \eqref{Skkt}:
    \begin{align*}
        &\sum_{t=k-1}^{T+k-2} \mathbb{I}_{\pm}\{v_t=0\}\cdot \sqrt{(\mathbf{U}_t^{(k)})^T S \mathbf{U}_t^{(k)}+\epsilon} \\&\hspace{35mm} + \lambda (\text{trace}(S) - 1) - \langle Y, S\rangle,
    \end{align*}
where $\lambda \in \mathbb{R}$ and $Y\succeq 0$.  The corresponding Karush-Kuhn-Tucker (KKT) conditions are
\begin{subequations}\label{kktcond}
    \begin{align}
        &\sum_{t=k-1}^{T+k-2} \mathbb{I}_{\pm}\{v_t=0\}\cdot \frac{\mathbf{U}_t^{(k)}(\mathbf{U}_t^{(k)})^T}{2\sqrt{(\mathbf{U}_t^{(k)})^T S \mathbf{U}_t^{(k)}+\epsilon}}  + \lambda I -Y=0, \label{stationarity}\\& \langle Y, S\rangle=0 \quad\Longleftrightarrow\quad YS=0. \label{comp}
    \end{align}
\end{subequations}
Note that the right-hand side of \eqref{comp} is derived from $tr(YS) = \langle Y^{1/2} S^{1/2}, Y^{1/2} S^{1/2}\rangle=0$ leading to $ Y^{1/2} S^{1/2}=0$. The factorization of $Y$ and $S$ is available due to
$Y\succeq 0, ~S\succeq 0$. Substituting \eqref{stationarity} into \eqref{comp}, we arrive at 
\begin{align}\label{multi0}
    \Biggr(\sum_{t=k-1}^{T+k-2} \mathbb{I}_{\pm}\{v_t=0\}\cdot \frac{\mathbf{U}_t^{(k)}(\mathbf{U}_t^{(k)})^T}{2\sqrt{(\mathbf{U}_t^{(k)})^T S \mathbf{U}_t^{(k)}+\epsilon}}  + \lambda I \Biggr) \cdot S  = 0
\end{align}
When $T\geq mk$,  it holds  that  \[\sum_{t=k-1}^{T+k-2} \mathbb{I}_{\pm}\{v_t=0\}\cdot \frac{\mathbf{U}_t^{(k)}(\mathbf{U}_t^{(k)})^T}{2\sqrt{(\mathbf{U}_t^{(k)})^T S \mathbf{U}_t^{(k)}+\epsilon}}\]
is a full-rank matrix with $mk$ distinct eigenvalues almost surely due to a continuous Gaussian distribution. This implies that
\[
Y=\sum_{t=k-1}^{T+k-2} \mathbb{I}_{\pm}\{v_t=0\}\cdot \frac{\mathbf{U}_t^{(k)}(\mathbf{U}_t^{(k)})^T}{2\sqrt{(\mathbf{U}_t^{(k)})^T S \mathbf{U}_t^{(k)}+\epsilon}}  + \lambda I 
\]
has a rank of at least $mk-1$. By Sylvester's rank inequality and \eqref{multi0}, we have 
\begin{align*}
    mk + \text{rank}(YS) &= mk + 0  \\&\geq \text{rank}\left(Y\right)  + \text{rank}(S)\geq mk-1 +\text{rank}(S),
\end{align*}
implying that $\text{rank}(S) \leq 1$ almost surely. This implies that $S$ with a rank greater than 1  satisfies \eqref{multi0} with probability 0. 

We now let $\mathbb{Q}^{mk\times mk}$ denote the set of $mk\times mk$ rational matrices. Then, we have
\begin{align*}
    &\mathbb{P} (\exists S\in \mathbb{R}^{mk\times mk} ~\text{with rank}(S)\geq 2 \text{~that satisfies \eqref{multi0}} )\\&\leq \mathbb{P} (\exists S\in \mathbb{Q}^{mk\times mk} ~\text{with rank}(S)\geq 2 \text{~that satisfies \eqref{multi0}} )\\&\leq \sum_{\substack{S\in \mathbb{Q}^{mk\times mk}, \\ \text{rank}(S)\geq 2}}\mathbb{P}\left(S \text{~satisfies~} \eqref{multi0}\right) = 0,
\end{align*}
where the first inequality is because $\{S\in \mathbb{R}^{mk \times mk}:\text{rank}(S) \geq 2\}$ is an open set  and $\mathbb{Q}^{mk\times mk}$ is dense in $\mathbb{R}^{mk\times mk}$. The second inequality comes from the union bound and the last equality is due to the countability of the set $\mathbb{Q}^{mk\times mk}$. 

Note that a minimizer should satisfy KKT conditions \eqref{kktcond} since Slater's condition holds for the constraint set $\{S\succeq 0, \text{trace}(S)=1\}$. Thus, we conclude that a minimizer of \eqref{Skkt} is a rank-1 matrix with probability 1. This claim thus far holds for any $\epsilon>0$. Moreover, a minimizer of \eqref{Skkt} with an arbitrarily small $\epsilon$ corresponds to a minimizer of \eqref{orig} due to continuity of the function, which implies that a minimizer of \eqref{orig} is also a rank-1 matrix with probability 1.

Considering $S=Z^T Z$, we have $\text{rank}(Z)=\text{rank}(S)$, and thus $\text{rank}(Z)$ is 1 almost surely.
\end{proof}

\medskip 

Finally, we present the main theorem of this section, which shows that after a finite time, the $\ell_2$-norm estimator achieves an estimation error that decays exponentially with $k$, the order of the Markov parameter matrix. 

\begin{theorem}\label{theorem1}
    Suppose that Assumptions \ref{stability}, \ref{maxnorm}, and \ref{null} hold. 
     Let $q:= 1-(1-p)^{k-1}$. Further, let the control inputs $\{u_t\}_{t=0}^{T+k-2}$ be independent with $u_t\sim N(0, \sigma^2 I_m)$.
    Given $\delta\in(0,1]$, when 
    \begin{align}\label{mintime}
        T=\Omega\left(\frac{k}{(1-2q)^2}\left[mk\log\Bigr(\frac{mk}{1-2q}\Bigr)+\log\Bigr(\frac{1}{\delta}\Bigr)\right]\right),
    \end{align}
    the estimation error of the Markov matrix is given as
    \begin{align}\label{esterror}
        \|G^* - \hat{G}^*_T\|_F =O\biggr(\frac{\rho^{k-1}\nu }{1-\rho}\biggr),
    \end{align}
    where $\hat G^*_T$ is a minimizer to \eqref{l2est} and $\nu = \frac{\|C\|_2}{1-2q}\cdot \bigr(\frac{\eta}{\sigma}+\sqrt{m}\|B\|_2\bigr)$.
\end{theorem}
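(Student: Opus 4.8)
The plan is to follow the standard recipe for analyzing non-smooth robust estimators: reduce the error bound to a deterministic inequality involving a "restricted eigenvalue"-type quantity, and then show that this quantity is bounded below with high probability using concentration. Write $Z := \hat G^*_T - G^*$ and recall from \eqref{relationship}--\eqref{defv} that $y_t - G^*\mathbf{U}_t^{(k)} = v_t + CA^{k-1}x_{t-k+1}$. The term $CA^{k-1}x_{t-k+1}$ is a nuisance that is controlled by Assumption \ref{stability}: it has norm $O(\psi \rho^{k-1}\|x_{t-k+1}\|_2)$, and by Lemma \ref{sumxt} its sum over the horizon is $O(\rho^{k-1}(\eta+\sigma\sqrt m\|B\|_2)T/(1-\rho))$ with probability $\geq 1-\delta$. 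The term $v_t$ is zero on the event $\{v_t = 0\}$, which by Assumption \ref{null} and the displayed computation has probability $\geq 1-q > 1/2$.

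The first main step is the optimality inequality. Since $\hat G^*_T$ minimizes \eqref{l2est}, substituting $G = G^*$ and using the triangle inequality termwise yields
\begin{align*}
\sum_{t=k-1}^{T+k-2}\|Z\mathbf{U}_t^{(k)}\|_2 \;\leq\; 2\sum_{t=k-1}^{T+k-2}\|v_t + CA^{k-1}x_{t-k+1}\|_2.
\end{align*}
I would then split the left side into the "clean" times $\{v_t=0\}$ and the "attacked" times $\{v_t\neq 0\}$, using $\|Z\mathbf{U}_t^{(k)}\|_2 \geq -\|Z\mathbf{U}_t^{(k)}\|_2$ on the latter, to obtain a bound of the form
\begin{align*}
\sum_{t=k-1}^{T+k-2}\mathbb{I}_\pm\{v_t=0\}\,\|Z\mathbf{U}_t^{(k)}\|_2 \;\leq\; C_1\sum_{t=k-1}^{T+k-2}\rho^{k-1}\|x_{t-k+1}\|_2,
\end{align*}
where the $v_t$ contributions on the right-hand side have been absorbed because on $\{v_t=0\}$ they vanish and on $\{v_t\neq 0\}$ the corresponding $\|Z\mathbf{U}_t^{(k)}\|_2$ has been moved to the left with a minus sign; more carefully, the attacked-time terms $\|v_t\|_2$ on the right cancel against the $+2\|v_t\|_2$ from the triangle inequality only after rearranging, so I would track constants here carefully. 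The upshot is a bound: (signed clean-time sum of $\|Z\mathbf{U}_t^{(k)}\|_2$) $\lesssim \rho^{k-1}\sum\|x_{t-k+1}\|_2$.

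The second — and hardest — step is the lower bound on the left-hand side. By homogeneity it suffices to lower bound $\inf_{\|Z\|_F=1}\sum_{t}\mathbb{I}_\pm\{v_t=0\}\|Z\mathbf{U}_t^{(k)}\|_2$ by something of order $\Theta((1-2q)T\sigma)$ with probability $\geq 1-\delta$. This is exactly where Lemma \ref{minrank1} enters: the infimum is attained at a rank-one $Z = ab^T$, so one only needs to control $\sum_t \mathbb{I}_\pm\{v_t=0\}|b^T\mathbf{U}_t^{(k)}|\cdot\|a\|_2$ uniformly over unit vectors $b\in\mathbb{S}^{mk-1}$ — a scalar problem. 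For fixed $b$, $b^T\mathbf{U}_t^{(k)}$ is $N(0,\sigma^2)$ (with mild dependence across $t$ from overlapping input windows, handled by a block/independence argument over every $k$-th index), and $\mathbb{E}[\mathbb{I}_\pm\{v_t=0\}|b^T\mathbf{U}_t^{(k)}|] \geq (1-2q)\sigma\sqrt{2/\pi}$ since $\mathbb{P}(v_t=0)\geq 1-q>1/2$ and $v_t$ is $\mathcal{F}_{t}$-measurable hence independent of $u_t$. Then a concentration bound (Lemma \ref{gauscon} for the sub-Gaussian deviation of the Lipschitz map $b\mapsto\sum|b^T\mathbf{U}_t^{(k)}|$, or a direct Bernstein argument) controls fluctuations around the mean, and a union bound over an $\epsilon$-net of $\mathbb{S}^{mk-1}$ of size $(3/\epsilon)^{mk}$ makes it uniform; optimizing $\epsilon$ produces the $mk\log(mk/(1-2q))$ term inside \eqref{mintime} and the $\log(1/\delta)$ term, and requiring the net error to be a constant fraction of the mean forces $T = \Omega\big(\tfrac{k}{(1-2q)^2}[mk\log(\tfrac{mk}{1-2q})+\log\tfrac1\delta]\big)$. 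I expect the delicate points to be (i) propagating the rank-one reduction through the net argument cleanly despite the $\epsilon$-perturbation in Lemma \ref{minrank1}, and (ii) handling the temporal dependence in $\{\mathbf{U}_t^{(k)}\}$ and the adaptivity of $\{v_t=0\}$ to the filtration — I would address the latter with a martingale/conditioning argument that exploits $v_t \in \mathcal{F}_t$ and $u_t \perp \mathcal{F}_t$.

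Combining the two steps: on the intersection of the good events (probability $\geq 1-2\delta$, then rescale $\delta$), we get $\|Z\|_F \cdot \Theta((1-2q)T\sigma) \lesssim \rho^{k-1}\cdot \Theta\big(\tfrac{(\eta+\sigma\sqrt m\|B\|_2)T}{1-\rho}\big)$, and dividing through by $(1-2q)T\sigma$ yields
\begin{align*}
\|G^*-\hat G^*_T\|_F = O\!\left(\frac{\rho^{k-1}}{1-2q}\cdot\frac{\eta/\sigma+\sqrt m\|B\|_2}{1-\rho}\right),
\end{align*}
which — after inserting the missing $\|C\|_2$ factor that comes from bounding $\|CA^{k-1}x_{t-k+1}\|_2 \leq \|C\|_2\psi\rho^{k-1}\|x_{t-k+1}\|_2$ and folding $\psi$ into the $O(\cdot)$ — is exactly \eqref{esterror} with $\nu = \tfrac{\|C\|_2}{1-2q}(\tfrac\eta\sigma+\sqrt m\|B\|_2)$. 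The $T$-factors canceling is the mechanism by which the bound becomes dimension-of-observation free: nowhere did $r$ appear, because the rank-one reduction collapsed the matrix problem to a vector problem over $\mathbb{S}^{mk-1}$.
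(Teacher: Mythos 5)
Your proposal follows essentially the same route as the paper's proof: the optimality inequality reducing everything to the signed clean-time sum $\sum_t \mathbb{I}_{\pm}\{v_t=0\}\|Z\mathbf{U}_t^{(k)}\|_2 \lesssim \rho^{k-1}\sum_t\|x_t\|_2$, the rank-one reduction via Lemma \ref{minrank1} collapsing the matrix problem to $\mathbb{S}^{mk-1}$, the expectation lower bound $(1-2q)\sigma\sqrt{2/\pi}$, Gaussian concentration of the $\sqrt{Tk}$-Lipschitz functional plus an $\epsilon$-net, and Lemma \ref{sumxt} for the state-norm sum. The one justification to tighten is the independence step in the expectation lower bound: since the adversary may choose $w_{t-1},\dots,w_{t-k+1}$ using past inputs, $\mathbb{I}_{\pm}\{v_t=0\}$ need not be independent of $\mathbf{U}_t^{(k)}$, so one should (as the paper does) first lower-bound it by $\mathbb{I}_{\pm}\{\xi_{t-1}=0,\dots,\xi_{t-k+1}=0\}$, which is genuinely independent of the inputs by Assumption \ref{null}.
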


\begin{proof}
   Given the finite time \eqref{mintime}, we will first derive a lower bound on 
    \begin{align}\label{gtrthan0}
        \sum_{t=k-1}^{T+k-2}\|(G^*-G)\mathbf{U}_t^{(k)} + v_t\|_2 -\| v_t\|_2.   
    \end{align}
    for any $G\in \mathbb{R}^{r\times mk}$. Note that we can write
    \begin{align*}
        &\sum_{t=k-1}^{T+k-2}\|(G^*-G)\mathbf{U}_t^{(k)} + v_t\|_2 -\| v_t\|_2\\&\hspace{5mm}= \sum_{\substack{t=k-1,\\v_t = 0}}^{T+k-2}\|(G^*-G)\mathbf{U}_t^{(k)}\|_2 \\&\hspace{25mm}+ \sum_{\substack{t=k-1,\\v_t \neq 0}}^{T+k-2}\|(G^*-G)\mathbf{U}_t^{(k)} + v_t\|_2 -\|v_t\|_2 \\&\hspace{5mm}\geq \sum_{\substack{t=k-1,\\v_t = 0}}^{T+k-2}\|(G^*-G)\mathbf{U}_t^{(k)}\|_2 - \sum_{\substack{t=k-1,\\v_t \neq 0}}^{T+k-2}\|(G^*-G)\mathbf{U}_t^{(k)}\|_2\\&\hspace{5mm}= \sum_{t=k-1}^{T+k-2} \mathbb{I}_{\pm} \{ v_t = 0\} \cdot \|(G^*-G)\mathbf{U}_t^{(k)}\|_2 \\&\hspace{5mm}= \|G^*-G\|_F  \sum_{t=k-1}^{T+k-2} \mathbb{I}_{\pm} \{ v_t = 0\}  \Bigr\|\frac{G^*-G}{\|G^*-G\|_F}\cdot\mathbf{U}_t^{(k)}\Bigr\|_2,\numberthis\label{ultlowerbound}
    \end{align*}
    where the first inequality follows from the triangle inequality and the last equality is from the positive homogeneity of the $\ell_2$-norm.

    For each $Z\in \mathbb{R}^{r\times mk}$ such that $\|Z\|_F=1$, define a function of $\mathbf{U} = [u_0 ~ u_1 ~\cdots u_{T+k-2}]$ :  
    \begin{align}\label{simple}
       f_Z(\mathbf{U}) :=\sum_{t=k-1}^{T+k-2} \mathbb{I}_{\pm} \{ v_t = 0\} \cdot \|Z\mathbf{U}_t^{(k)}\|_2.
    \end{align}
    To obtain a lower bound on \eqref{ultlowerbound},  it suffices to establish a universal lower bound on $f_Z(\mathbf{U})$ since the Frobenius norm of $\frac{G^*-G}{\|G^*-G\|_F}$ is 1.

Notice that $\mathbf{U}$ is a Gaussian with a variance of $\sigma^2 I_{m (T+k-1)}$. To obtain the Lipschitz constant of $f_Z$, one can observe for $\mathbf{U}, \mathbf{U}'\in \mathbb{R}^{m(T+k-1)}$ that 
    \begin{align*}
        |f_Z(\mathbf{U}) - &f_Z(\mathbf{U}')| \leq \sum_{t=k-1}^{T+k-2}
        \| Z[(u_t-u_t') ~ \dots ~ (u_{t-k}-u_{t-k}')]\|_2 \\ &\leq \|Z\|_F \sum_{t=k-1}^{T+k-2}
        \|[(u_t-u_t') ~ \dots ~ (u_{t-k}-u_{t-k}')]\|_2 \\&\leq  \sqrt{T} \sqrt{\sum_{t=k-1}^{T+k-2}
        \|[(u_t-u_t') ~ \dots ~ (u_{t-k}-u_{t-k}')]\|_2^2 }\\&\leq  \sqrt{T} \sqrt{k}\sqrt{\sum_{t=0}^{T+k-2}\|u_t - u_t'\|_2^2} \\&=  \sqrt{Tk}\|\mathbf{U} - \mathbf{U}'\|_2, \numberthis\label{Tk+1}
    \end{align*}
    where $\mathbf{U}' = [u_0' ~ u_1' ~\cdots u_{T+k-2}']$. Thus, the Lipschitz constant of $f_Z$ is $\sqrt{Tk}$. It follows from Lemma \ref{gauscon} and the Hoeffding's inequality that 
    \begin{align*}
        \mathbb{P}\biggr(f_Z(\mathbf{U}) - \mathbb{E}&\left[f_Z(\mathbf{U})\right] \geq -\frac{T(1-2q)\sigma}{\sqrt{2\pi}}\biggr) \\ &\geq 1-\exp\left(-\Omega\left(\frac{T^2 (1-2q)^2 \sigma^2}{\sigma^2 Tk}\right)\right) \\&= 1-\exp\left(-\Omega\left(\frac{T (1-2q)^2 }{ k}\right)\right).\numberthis\label{hoeffdingapply}
    \end{align*}
    Note that the Hoeffding's inequality is valid since $1-2q>0$ holds under Assumption \ref{null}.
Now, we lower bound $\mathbb{E}[f_Z(\mathbf{U})]$.     Since the event $\{v_t=0\}$ includes the event $\{w_{t-1}=0, \dots, w_{t-k+1}=0\}$ (see \eqref{secondline}), we have
    \begin{align*}
        &\mathbb{E}[f_Z(\mathbf{U})]=\mathbb{E}\left[ \sum_{t=k-1}^{T+k-2} \mathbb{I}_{\pm} \{ v_t = 0\} \cdot \|Z\mathbf{U}_t^{(k)}\|_2\right]\\ &\hspace{5mm}\geq  T~ \mathbb{E}\left[\mathbb{I}_{\pm}\{w_{t-1}=0, \dots, w_{t-k+1}=0\} \cdot \|Z\mathbf{U}_t^{(k)}\|_2\right] \\&\hspace{5mm}\geq 
        T~ \mathbb{E}\bigr[\mathbb{I}_{\pm}\{\xi_{t-1}=0, \dots, \xi_{t-k+1}=0\} \bigr] \cdot\mathbb{E}\bigr[ \|Z\mathbf{U}_t^{(k)}\|_2\bigr] \\&\hspace{5mm}\geq T(1\cdot (1-q)+(-1)\cdot q)\cdot \sigma \sqrt{\frac{2}{\pi}}=T(1-2q)\sigma \sqrt{\frac{2}{\pi}},\numberthis\label{fz}
    \end{align*}
    where the second inequality follows from \eqref{include} and the independence between $\mathbf{U}_t^{(k)}$ and  $\xi_{t-1}, \dots \xi_{t-k+1}$; the last uses 
$q:=1-(1-p)^{k-1}$ being the probability that not all $\xi_{t-1}, \dots \xi_{t-k+1}$ are zero, and $Z \mathbf{U}_t^{(k)}\sim N(0,\sigma^2 ZZ^T)$ with $\text{trace}(\sigma^2 ZZ^T) = \sigma^2$. In particular,  equality holds in the last inequality when $Z$ is a rank-1 matrix.

    Thus, considering \eqref{hoeffdingapply} and \eqref{fz},  when \begin{align}\label{onetime}
    T=\Omega\left(\frac{k}{(1-2q)^2}\log\left(\frac{1}{\delta}\right)\right),\end{align} we have  \begin{align}\label{fixedz}
    f_Z(\mathbf{U})\geq \frac{T(1-2q) \sigma }{\sqrt{2\pi}}
    \end{align}
    with probability at least $1-\delta$, for a fixed $Z\in\mathbb{R}^{r\times mk}$ such that $\|Z\|_F=1$.

    We will now prove that $f_Z(\mathbf{U}) > 0$ for all rank-1 matrices $Z$ with high probability. One can represent any rank-1 matrix as $Z = sz^T$, where $s\in \mathbb{R}^r, ~z\in\mathbb{R}^{mk}$ are unit vectors. Then, since $\|s\|_2=1,$ we have 
    \begin{align*}
    \|Z\mathbf{U}_t^{(k)}\|_2 = \|sz^T\mathbf{U}_t^{(k)}\|_2 = 
    \|z^T \mathbf{U}_t^{(k)}\|_2,
    \end{align*}
    which implies that it suffices to check that 
    \[
    f_z(\mathbf{U})=\sum_{t=k-1}^{T+k-2} \mathbb{I}_{\pm}\{v_t=0\}\cdot \|z^T\mathbf{U}_t^{(k)}\|_2>0, \quad \forall z\in\mathbb{S}^{mk-1}
    \]
     holds with high probability. 
     Note that we have $f_z(\mathbf{U}) - f_{z'}(\mathbf{U}) \geq -\|z-z'\|_2 \sum_{t=k-1}^{T+k-2}  \|\mathbf{U}_t^{(k)}\|_2$ due to the triangle inequality. Also note that $\sum_{t=k-1}^{T+k-2} \|\mathbf{U}_t^{(k)}\|_2$ has the  Lipschitz constant $\sqrt{Tk}$ similar to \eqref{Tk+1}, which incurs that 
     \begin{align*}
         &\mathbb{P}\left(\sum_{t=k-1}^{T+k-2} \|\mathbf{U}_t^{(k)}\|_2 - \mathbb{E}\biggr[\sum_{t=k-1}^{T+k-2} \|\mathbf{U}_t^{(k)}\|_2\biggr] \leq T \sigma \sqrt{mk}\right)\\&\hspace{5mm}\geq 1-\exp\left(-\Omega\left(\frac{ T^2 \sigma^2 mk}{\sigma^2 Tk}\right)\right)=1-\exp\left(-\Omega\left( Tm\right)\right).
     \end{align*}
     This implies that if $T=\Omega\bigr( \frac{1}{m}\log\left(\frac{2}{\delta}\right)\bigr)$, then \begin{align}\label{utkupbound}\mathbb{P}\biggr(\sum_{t=k-1}^{T+k-2} \|\mathbf{U}_t^{(k)}\|_2\leq 2 T\sigma \sqrt{mk}\biggr)\geq 1-\frac{\delta}{2},\end{align} since we have $\mathbb{E}[ \|\mathbf{U}_t^{(k)}\|_2]\leq \sigma\sqrt{mk}$ for $\mathbf{U}_t^{(k)}\sim N(0, \sigma^2 I_{mk})$. 
     Let $\epsilon := \frac{1-2q}{4\sqrt{2\pi \cdot mk}}$. Then, the relation
     \begin{align*}
         f_z(\mathbf{U}) - f_{z'}(\mathbf{U}) &\geq -\epsilon \cdot 2T\sigma \sqrt{mk}  = -\frac{T(1-2q)\sigma}{2\sqrt{2\pi}}
     \end{align*}
     holds for all $z,z'$ such that $\|z-z'\|_2\leq \epsilon$ with probability at least $1-\frac{\delta}{2}$. This leads to the fact that 
     if $f_{z'}(\mathbf{U}) \geq \frac{T(1-2q)\sigma}{\sqrt{2\pi}}$, then all the points in $\{z: \|z-z'\|_2\leq \epsilon\}$ satisfy that $f_{z}(\mathbf{U})\geq \frac{T(1-2q)\sigma}{2\sqrt{2\pi}}$ with probability at least $1-\frac{\delta}{2}$.  Due to the covering number lemma  \cite{vershynin2025high} and the union bound with \eqref{utkupbound}, if we let $(1+\frac{2}{\epsilon})^{mk}$ points $z'$ to satisfy $f_{z'}(\mathbf{U}) \geq \frac{T(1-2q)\sigma}{\sqrt{2\pi}}$ concurrently with probability at least $1-\frac{\delta}{2}$, this ensures that all points $z\in \mathbb{S}^{mk-1}$  satisfy $f_{z}(\mathbf{U})\geq \frac{T(1-2q)\sigma}{2\sqrt{2\pi}}$ with probability at least $1-\delta$. 
     In conclusion, considering the union bound, we replace $\delta$ in \eqref{onetime} with $\delta / [2(1+\frac{2}{\epsilon})^{mk}]$ to arrive at 
     \begin{align*}
         T&= \Omega\left(\frac{k}{(1-2q)^2}\log\left(\frac{2(1+\frac{2}{\epsilon})^{mk}}{\delta}\right)\right) \\&= \Omega\left(\frac{k}{(1-2q)^2 }\left[mk\log\left(\frac{mk}{1-2q}\right) + \log\left(\frac{1}{\delta}\right)\right]\right)
     \end{align*}
     to obtain $f_{z}(\mathbf{U})\geq \frac{T(1-2q)\sigma}{2\sqrt{2\pi}}$ for all $z\in \mathbb{S}^{mk-1}$ with probability at least $1-\delta$ (note that the condition $T\geq \frac{1}{m}\log(\frac{2}{\delta})$ for \eqref{utkupbound} is automatically satisfied). Thus, we proved that rank-1 matrix $Z$ has a positive lower bound on $f_Z(\mathbf{U})$ with high probability. Due to Lemma \ref{minrank1}, $f_Z(\mathbf{U})$ is minimized at a rank-1 matrix $Z$, implying that $f_Z(\mathbf{U})$ has the same positive lower bound universally for all $Z$ such that $\|Z\|_F=1$. Recalling \eqref{ultlowerbound}, we ultimately obtain
     \begin{align*}
        & \sum_{t=k-1}^{T+k-2}\|(G^*-G)\mathbf{U}_t^{(k)} + v_t\|_2 -\| v_t\|_2 \\&\hspace{35mm}\geq \|G^* - G\|_F\cdot \frac{T(1-2q)\sigma}{2\sqrt{2\pi}}.\numberthis\label{lowerbo}
    \end{align*}

The $\ell_2$-norm estimator \eqref{l2est} is equivalent to 
    \begin{align}\label{l2estalter}
        \min_{G\in \mathbb{R}^{r\times mk}} \sum_{t=k-1}^{T+k-2}\|(G^*-G)\mathbf{U}_t^{(k)} + v_t+CA^{k-1} x_{t-k+1}\|_2,
    \end{align}
    considering \eqref{relationship} and \eqref{defv}. 
For a minimizer $\hat{G}_T^*$ to \eqref{l2estalter}, the optimality incurs that
\begin{align*}
\sum_{t=k-1}^{T+k-2} &\|(G^*-\hat G^*_T)\mathbf{U}_t^{(k)} + v_t\|_2 -\|CA^{k-1}x_{t-k+1}\|_2 \numberthis\label{gk1}
\\&\leq\sum_{t=k-1}^{T+k-2} \|(G^*-\hat G^*_T)\mathbf{U}_t^{(k)} + v_t +CA^{k-1}x_{t-k+1}\|_2 \\&\leq \sum_{t=k-1}^{T+k-2} \| v_t +CA^{k-1}x_{t-k+1}\|_2 \\&\leq \sum_{t=k-1}^{T+k-2} \| v_t\|_2 +\sum_{t=0}^{T-1}\|CA^{k-1}x_{t}\|_2,\numberthis\label{gk4}
\end{align*}
where the first and third inequalities are due to the triangle inequality. Considering \eqref{gk1} and \eqref{gk4}, we have 
   \begin{align*}
         \sum_{t=k-1}^{T+k-2}\|(G^*-\hat G^*_T)\mathbf{U}_t^{(k)} + v_t\|_2 -\| v_t\|_2 \leq 2\sum_{t=0}^{T-1} \|CA^{k-1} x_t\|_2, 
    \end{align*}
    Notice that \eqref{lowerbo} provides a lower bound on the left-hand side with high probability. Also, the right-hand side can be bounded by Lemma \ref{sumxt}. Thus, we arrive at 
\begin{align*}
   & \|G^* - \hat G^*_T\|_F \cdot \frac{T(1-2q)\sigma}{2\sqrt{2\pi} } \\&\hspace{25mm}= O\Bigr(\rho^{k-1}\|C\|_2 \frac{(\eta+\sigma \sqrt{m}\cdot \|B\|_2)T}{1-\rho}\Bigr). 
\end{align*}
This concludes that \eqref{esterror} holds with probability at least $1-\delta$, after the finite time \eqref{mintime}.
\end{proof}

\begin{remark}
    The final estimation error of the Markov parameter matrix with the $\ell_2$-norm estimator is $O(\rho^k)$. 
    Note that this number does not scale with the time $T$, meaning that long enough time does not decay the error to zero. Instead, it is directly related to the pre-specified $k$, the number of Markov parameters. Intuitively, consider the equation \eqref{relationship}; any estimator would leverage this relationship to construct an estimate of the Markov parameter matrix. For example, if the attacks $w_t$ are independent and zero-mean Gaussian for all $t$, the works \cite{oymak2022revisit, sarkar2021finite} showed that the least-squares method achieves $O(1/\sqrt{T})$ error, but they leverage the fact that  the expectations of \eqref{secondline} and \eqref{thirdline} are zero. In our setting, we are aware that \eqref{secondline} equals zero with probability less than $0.5$ due to Assumption \ref{null}, but little is known about \eqref{thirdline}, such as its expectation or geometric median. This information is unavailable because each attack is completely arbitrary and is inherently incorporated into \eqref{thirdline}; all that is known is that the factor $A^{k-1}$ ensures the term is sufficiently small. Due to the lack of information under adversarial attacks, the best achievable bound on the estimation error is $O(\rho^k)$.
\end{remark}

\begin{remark}
    To provide a clean estimation error bound,  we proved Theorem \ref{theorem1} when each $u_t$ has a mean of zero. However, it is possible to extend this result for nonzero-mean Gaussian. For example, \eqref{fz} still holds for nonzero-mean $\mathbf{U}_t^{(k)}$ since shifting the mean for Gaussian variables always provide a greater expectation; \textit{i.e.}, $\mathbb{E}[\|Z\mathbf{U}_t^{(k)}\|_2] \leq \mathbb{E}[\|Z(\mathbf{U}_t^{(k)}+\alpha)\|_2]$ for any $\alpha\in \mathbb{R}^{mk}$. Moreover,  Lemma \ref{gauscon} holds regardless of the expectation of Gaussian variables. The only line of proof that does not hold is 
\eqref{utkupbound} since we cannot guarantee that $\mathbb{E}[\|\mathbf{U}_t^{(k)}\|_2] \le \sigma \sqrt{mk}$ for the nonzero-mean case; however, imposing an extra bound on the expectation of $u_t$ will easily resolve the issue,  with some additional constants appearing in the error bound. 
Note that mitigating the assumption of zero-mean control inputs is crucial for real-world applications since agents can then use their own control inputs with arbitrary expectation augmented with Gaussian excitation. 
\end{remark}



\section{Streaming Estimation: Stochastic Subgradient Descent Algorithm}\label{sec:subg}

In the previous section, we showed that there exists a recovery time after which the $\ell_2$-norm estimator achieves an error of $O(\rho^k)$ using the batch data of length at least the time threshold. However, its behavior \textit{prior to the recovery time} is still unknown. To this end, we propose a subgradient descent algorithm—which uses a subgradient of the $\ell_2$-norm estimator—and show that it works for the \textit{streaming data} while still providing a provable bound on the estimation error.
In contrast to smooth estimators like least squares for which closed-form solutions exist, the $\ell_2$-norm estimator generally does not admit a closed-form expression. Consequently, recursive updates from the estimate at time $t$ to that at time $t+1$ are not available. This motivates the use of the proposed subgradient descent algorithm.  

Meanwhile, computing the full subgradient at time $t$ indeed requires $\Theta(t)$ operations, and thus the total computational complexity grows quadratically with time if every iteration uses the full subgradient. Thus, to reduce complexity, the proposed algorithms adopt a stochastic approach, which only considers one observation trajectory sample at a time, to provide an unbiased approximation of the full gradient. 
Algorithms \ref{Algorithm 1} and \ref{Algorithm 2} respectively leverage the knowledge of the true Markov parameter matrix $G^*$; the former requires the inner product between the selected subgradient and $G^*$, and the latter—often referred to as Polyak step size—requires $f_t(G^*)$ which involves the knowledge of \eqref{secondline} and \eqref{thirdline}.
Despite such challenges, we provide these algorithms by offering some flexibility to choose step sizes via the parameter $\alpha$. 
We first show that these algorithms show linear convergence until the algorithm reaches $O(\rho^k)$—the error achieved by the $\ell_2$-norm estimator using the batch data.

\begin{algorithm}[!t]
  \caption{Stochastic Subgradient Descent Algorithm with the Best\protect\footnotemark[1] step size}
  
  \begin{flushleft}
        \textbf{Input:} Time Horizon $T$. Initialization of the Markov parameter matrix $G^{(0)} = 0$.  Flexibility parameter $0<\alpha\leq 1$. 
    
  \end{flushleft}

  \begin{algorithmic}[1]
    \FOR{Time $t=0,~k,~2k,\dots$}
        \STATE Consider $f_t(G) = 
        \|y_{t+k-1} - G\mathbf{U}_{t+k-1}^{(k)}\|_2$.
        \STATE Select\footnotemark[2] $g^{(t)} \in \partial f_t(G^{(t)})$.
        
        \STATE Let $\theta_t = \frac{ \langle g^{(t)}, G^{(t)}-G^* \rangle}{\|g^{(t)}\|_F^2}.$ \alglinelabel{stepsize}

        \STATE Select the step size $\alpha \theta_t \leq  \gamma_t \leq (2-\alpha)\theta_t.$ 
        \STATE Let $G^{(t+k)} = G^{(t)} - \gamma_t g^{(t)}.$

    \ENDFOR 
  \end{algorithmic}

  \begin{flushleft}
        \textbf{Output:} Estimates at times that are multiples of $k$: $G^{(k)}, G^{(2k)}, G^{(3k)}, \dots$.  
  \end{flushleft}

  \label{Algorithm 1}
\end{algorithm}

\footnotetext[1]{We refer to this step size rule as the ``best'' since $\theta_t$ in Line \ref{stepsize} corresponds to the minimizer of the function in \eqref{optimal} with respect to $\gamma_t$.}

\footnotetext[2]{To ensure that Line \ref{stepsize} is valid, we note that the $\ell_2$-norm is non-smooth at points where its subdifferential contains the zero vector, and thus a nonzero subgradient $g^{(t)}$ can always be chosen.}

\begin{algorithm}[!t]
  \caption{Algorithm \ref{Algorithm 1} with the Polyak step size}

\begin{flushleft}

    \textcolor{blue}{// Replace Line \ref{stepsize} of Algorithm \ref{Algorithm 1} with the following:}
\end{flushleft}
\begin{algorithmic}

  \STATE Let the step size $\theta_t = \max\{\frac{f_t(G^{(t)}) - f_t(G^*)}{\|g^{(t)}\|_F^2},0\}$.

\end{algorithmic}

  \label{Algorithm 2}
\end{algorithm}


\begin{theorem}\label{linconv}
Suppose that all the assumptions of Theorem \ref{theorem1} hold. 
For all $t\geq 0$ that are multiples of $k$, let $G^{(t)}$ denote the estimate produced by Algorithm \ref{Algorithm 1} at time $t$.
Then, we have
\begin{align*}
\nonumber &\mathbb{E}[\|G^{(t)}-G^*\|_F^2] \\&= O\biggr(\left(1-\frac{(1-2q)^2 (2\alpha-\alpha^2) }{2\pi\cdot mk}\right)^{t/k} \|G^{(0)} - G^*\|_F^2\\&\hspace{60mm}+\Bigr(\frac{\rho^{k-1}\nu }{1-\rho} \Bigr)^2\biggr),  
\end{align*}
where $\nu=\frac{\|C\|_2}{1-2q}\left(\frac{\eta}{\sigma}+\sqrt{m}\|B\|_2\right)$.
\end{theorem}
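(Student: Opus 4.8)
The plan is to prove a one-step stochastic contraction of $\|G^{(t)}-G^*\|_F^2$ and then unroll it into a geometric series. For the one-step bound, expand $G^{(t+k)}=G^{(t)}-\gamma_tg^{(t)}$:
\[
\|G^{(t+k)}-G^*\|_F^2=\|G^{(t)}-G^*\|_F^2-2\gamma_t\langle g^{(t)},G^{(t)}-G^*\rangle+\gamma_t^2\|g^{(t)}\|_F^2 .
\]
This is a convex quadratic in $\gamma_t$ minimized at the $\theta_t$ of Line~\ref{stepsize}, and it takes the common value $-(2\alpha-\alpha^2)\langle g^{(t)},G^{(t)}-G^*\rangle^2/\|g^{(t)}\|_F^2$ at both endpoints $\alpha\theta_t$ and $(2-\alpha)\theta_t$, so by convexity every admissible $\gamma_t$ gives
\[
\|G^{(t+k)}-G^*\|_F^2\le\|G^{(t)}-G^*\|_F^2-(2\alpha-\alpha^2)\frac{\langle g^{(t)},G^{(t)}-G^*\rangle^2}{\|g^{(t)}\|_F^2}.
\]
Since $g^{(t)}=-r^{(t)}(\mathbf{U}_{t+k-1}^{(k)})^{T}/\|r^{(t)}\|_2$ with $r^{(t)}=(G^*-G^{(t)})\mathbf{U}_{t+k-1}^{(k)}+v_{t+k-1}+CA^{k-1}x_t$ (see \eqref{relationship}), one has $\|g^{(t)}\|_F^2=\|\mathbf{U}_{t+k-1}^{(k)}\|_2^2>0$ a.s., and the task reduces to lower bounding $\langle g^{(t)},G^{(t)}-G^*\rangle^2/\|\mathbf{U}_{t+k-1}^{(k)}\|_2^2$ in expectation.

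The key single-window estimate rests on the structural fact that Algorithm~\ref{Algorithm 1} consumes \emph{non-overlapping} length-$k$ input windows. Let $\mathbb{E}_t$ be the conditional expectation given all randomness generated before the $t$-th window; then $\mathbf{U}:=\mathbf{U}_{t+k-1}^{(k)}\sim N(0,\sigma^2I_{mk})$ is independent of that history — under which $G^{(t)}$ and $x_t$ are measurable — and, by Assumption~\ref{null}, the no-attack event $\mathcal{A}_t:=\{\xi_t=\dots=\xi_{t+k-2}=0\}\subseteq\{v_{t+k-1}=0\}$, of probability $(1-p)^{k-1}=1-q$, is independent of everything. Write $\delta:=G^{(t)}-G^*$ and $\widehat{\mathbf{U}}:=\mathbf{U}/\|\mathbf{U}\|_2$, uniform on $\mathbb{S}^{mk-1}$. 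Combining the subgradient inequality $\langle g^{(t)},\delta\rangle\ge f_t(G^{(t)})-f_t(G^*)$ with the elementary bound from the proof of Theorem~\ref{theorem1}, namely $\|(G^*-G)\mathbf{U}+\zeta\|_2-\|\zeta\|_2\ge\mathbb{I}_{\pm}\{v=0\}\|(G^*-G)\mathbf{U}\|_2-2\|CA^{k-1}x\|_2$ with $\zeta=v_{t+k-1}+CA^{k-1}x_t$, and dividing by $\|\mathbf{U}\|_2$, gives $\langle g^{(t)},\delta\rangle/\|\mathbf{U}\|_2\ge\mathbb{I}_{\pm}\{v_{t+k-1}=0\}\|\delta\widehat{\mathbf{U}}\|_2-2\|CA^{k-1}x_t\|_2/\|\mathbf{U}\|_2$. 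Because $\mathbb{I}_{\pm}\{v_{t+k-1}=0\}\ge\mathbb{I}_{\pm}\{\mathcal{A}_t\}$ and $\mathcal{A}_t$ is independent of the rotationally invariant window, $\mathbb{E}_t[\mathbb{I}_{\pm}\{v_{t+k-1}=0\}\|\delta\widehat{\mathbf{U}}\|_2]\ge(2(1-q)-1)\,\mathbb{E}_t[\|\delta\widehat{\mathbf{U}}\|_2]=(1-2q)\,\mathbb{E}_t[\|\delta\widehat{\mathbf{U}}\|_2]$ — the same signed-indicator device that produced the factor $1-2q$ in Theorem~\ref{theorem1}. Since $Z\mapsto\mathbb{E}[\|Z\widehat{\mathbf{U}}\|_2]$ over $\|Z\|_F=1$ is minimized at a rank-$1$ matrix (concavity of $\sqrt{\cdot}$, in the spirit of Lemma~\ref{minrank1}) with minimum $\mathbb{E}|\langle z,\widehat{\mathbf{U}}\rangle|\ge1/\sqrt{\pi mk}$, we get $\mathbb{E}_t[\|\delta\widehat{\mathbf{U}}\|_2]\ge\|\delta\|_F/\sqrt{\pi mk}$, while $\mathbb{E}_t[\|CA^{k-1}x_t\|_2/\|\mathbf{U}\|_2]\le\psi\rho^{k-1}\|C\|_2\|x_t\|_2\,\mathbb{E}[\|\mathbf{U}\|_2^{-1}]=O\big(\rho^{k-1}\|C\|_2\|x_t\|_2/(\sigma\sqrt{mk})\big)$ by Assumption~\ref{stability} and the inverse-chi moment. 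Hence $\mathbb{E}_t[\langle g^{(t)},\delta\rangle/\|\mathbf{U}\|_2]\ge A_t-B_t$ with $A_t=\tfrac{1-2q}{\sqrt{\pi mk}}\|\delta\|_F$ and $B_t=O(\rho^{k-1}\|C\|_2\|x_t\|_2/(\sigma\sqrt{mk}))$, both measurable given the past; conditional Jensen and $(A_t-B_t)^2\ge A_t^2-2A_tB_t$ then yield $\mathbb{E}_t[\langle g^{(t)},\delta\rangle^2/\|g^{(t)}\|_F^2]\ge\tfrac{(1-2q)^2}{\pi mk}\|\delta\|_F^2-2A_tB_t$.

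Plugging this into the one-step bound, taking total expectation, applying Cauchy--Schwarz to $\mathbb{E}[A_tB_t]$ together with $\mathbb{E}[\|x_t\|_2^2]=O\big((\tfrac{\eta+\sigma\sqrt m\|B\|_2}{1-\rho})^2\big)$ from Lemma~\ref{sumxt}, and using $2uv\le\tfrac12u^2+2v^2$ to absorb half of the quadratic term, I obtain the recursion
\[
\mathbb{E}[\|G^{(t+k)}-G^*\|_F^2]\le\Big(1-\tfrac{(1-2q)^2(2\alpha-\alpha^2)}{2\pi mk}\Big)\mathbb{E}[\|G^{(t)}-G^*\|_F^2]+O\!\Big(\tfrac{(2\alpha-\alpha^2)(1-2q)^2}{mk}\big(\tfrac{\rho^{k-1}\nu}{1-\rho}\big)^2\Big),
\]
with $\nu=\tfrac{\|C\|_2}{1-2q}(\tfrac{\eta}{\sigma}+\sqrt m\|B\|_2)$ (here the $\tfrac12$ from the absorption combined with $(\mathbb{E}|\langle z,\widehat{\mathbf{U}}\rangle|)^2\ge\tfrac{1}{\pi mk}$ produces the $\tfrac{1}{2\pi}$). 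Unrolling over the $t/k$ completed iterations and summing the geometric series — whose ratio $1-\kappa$, $\kappa=\tfrac{(1-2q)^2(2\alpha-\alpha^2)}{2\pi mk}$, makes the additive piece sum to $O\big((\tfrac{\rho^{k-1}\nu}{1-\rho})^2\big)$ — gives exactly the stated bound.

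I expect the main obstacle to be the single-window estimate of the second paragraph. Unlike Theorem~\ref{theorem1}, there is no sum over $T$ samples to concentrate, so everything must be carried out in expectation on one window; moreover $\langle g^{(t)},G^{(t)}-G^*\rangle$ need not be sign-definite when the adversary is active (so the update can locally move in the wrong direction), and recovering the sharp factor $(1-2q)^2$ — rather than a weaker $1-q$ obtained by naively restricting to the event $\mathcal{A}_t$ — is precisely what the signed-indicator argument of Theorem~\ref{theorem1}, together with the independence of the normalized window from the attack pattern, makes possible. One must also lower bound $\mathbb{E}[\|\delta\widehat{\mathbf{U}}\|_2]$ uniformly over the unknown $\delta=G^{(t)}-G^*$ (reducing to the rank-$1$ worst case) and decouple the Gaussian magnitude $\|\mathbf{U}\|_2$ from the state contribution $\|CA^{k-1}x_t\|_2$ before invoking rotational invariance.
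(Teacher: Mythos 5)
Your proposal is correct and reaches the paper's exact recursion and constants, but it is organized differently in two respects. First, the paper isolates the nuisance term $\mathbb{E}[\|CA^{k-1}x_t\|_2\,|\,\mathcal{G}_t]$ by splitting on the event $J_t$ of \eqref{xtcondition} (error large relative to the state term, where contraction occurs) versus $J_t^c$ (error already at the $O(\rho^{k-1})$ floor), whereas you keep the cross term $-2A_tB_t$ and absorb it by $2uv\le\tfrac12u^2+2v^2$ into half the contraction plus an additive remnant; both routes produce $\bigl(1-\tfrac{(1-2q)^2(2\alpha-\alpha^2)}{2\pi mk}\bigr)$ with the same additive term summing geometrically to $O\bigl((\tfrac{\rho^{k-1}\nu}{1-\rho})^2\bigr)$. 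Second, the paper treats the ratio $\langle g^{(t)},\delta\rangle^2/\|g^{(t)}\|_F^2$ via the Cauchy--Schwarz bound $\mathbb{E}[X]^2\le\mathbb{E}[X^2/Y]\,\mathbb{E}[Y]$ with $\mathbb{E}[\|g^{(t)}\|_F^2\,|\,\mathcal{G}_t]\le\sigma^2mk$ as in \eqref{ubgtsq}, while you exploit the exact identity $\|g^{(t)}\|_F=\|\mathbf{U}_{t+k-1}^{(k)}\|_2$, normalize pointwise, use the independence of the Gaussian direction and magnitude, and apply conditional Jensen; your spherical excitation bound $\mathbb{E}\|Z\widehat{\mathbf{U}}\|_2\ge 1/\sqrt{\pi mk}$ (rank-one worst case by concavity of $S\mapsto\mathbb{E}\sqrt{\widehat{\mathbf{U}}^TS\widehat{\mathbf{U}}}$) is the spherical analogue of the Gaussian computation in \eqref{fz} and yields the same $\tfrac{1}{2\pi mk}$ after the AM--GM absorption. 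The shared core — the one-step expansion \eqref{noexp} from the endpoint values of the quadratic in $\gamma_t$, the signed-indicator device through the exogenous coins $\xi$ to get the factor $1-2q$ rather than $1-q$, and Lemma \ref{sumxt} for $\mathbb{E}\|x_t\|_2^2$ — is identical.

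Two spots in your write-up deserve an explicit line. When you pass from $\mathbb{E}_t[X]\ge A_t-B_t$ to $\mathbb{E}_t[X^2]\ge A_t^2-2A_tB_t$, the intermediate inequality $(\mathbb{E}_t[X])^2\ge(A_t-B_t)^2$ fails when $A_t-B_t<0$; the conclusion still holds because in that case $A_t^2-2A_tB_t\le -A_t^2\le 0\le\mathbb{E}_t[X^2]$, but the case split should be stated. Also, the identity $\|g^{(t)}\|_F=\|\mathbf{U}_{t+k-1}^{(k)}\|_2$ holds only off the (probability-zero, for continuous $\mathbf{U}$) event that the residual vanishes, where the subdifferential is a ball and the chosen nonzero subgradient may have strictly smaller norm; a one-sentence remark that this event is null suffices.
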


\begin{proof}
 Following the step size rule, we have 
 \begin{align*}
     &\|G^{(t+k)}-G^*\|_F^2  = \|G^{(t)}-\gamma_t g^{(t)}-G^*\|_F^2\\&= \|G^{(t)}-G^*\|_F^2 -2\gamma_t \langle g^{(t)}, G^{(t)}-G^*\rangle + \gamma_t^2 \|g^{(t)}\|_F^2  \numberthis\label{optimal}
     \\&= \|G^{(t)}-G^*\|_F^2  - (2\alpha-\alpha^2)\frac{\langle g^{(t)}, G^{(t)}-G^*\rangle ^2}{\|g^{(t)}\|_F^2}. \numberthis\label{noexp}
 \end{align*}

Now, for all $t\geq 0$ that are multiples of $k$,
define the filtration $\mathcal{G}_t = \bm{\sigma}\{G^{(0)},G^{(k)},G^{(2k)}, \dots, G^{(t)}\}$. 
Let $J_t$ be the $\mathcal{G}_t$-measurable event that 
    \begin{align}\label{xtcondition}
         \|G^{(t)}-G^*\|_F\geq   \frac{\sqrt{2\pi}}{(1-2q)\sigma}\mathbb{E}[\|CA^{k-1} x_{t}\|_2~|~\mathcal{G}_t]
    \end{align}
    holds. Also, let $J_t^c$ denote its complement. 
 We now decompose
\begin{align*}
\mathbb{E}[\|G^{(t+k)} -  G^*\|_F^2~|~\mathcal{G}_t] =& \underbrace{ \mathbb{E}\bigr[\mathbb{I}\{J_t^c\}\cdot\|G^{(t+k)} -  G^*\|_F^2~|~\mathcal{G}_t\bigr]}_{[\text{Term~} 1]} \\&+ \underbrace{  \mathbb{E}\bigr[\mathbb{I}\{J_t\}\cdot\|G^{(t+k)} -  G^*\|_F^2~|~\mathcal{G}_t\bigr]}_{[\text{Term~} 2]},\numberthis\label{decomposition}
\end{align*}
and bound each term on the right-hand side separately.

Observe that \eqref{noexp} implies that the estimation error does not increase with the proposed step size; \textit{i.e.}, $\|G^{(t+k)}-G^*\|_F^2 \leq \|G^{(t)}-G^*\|_F^2$. Thus, we have
\begin{align}\label{term1}
    [\text{Term~}1] \leq \mathbb{I}(J_t^c)\cdot \|G^{(t)}-G^*\|_F^2.
\end{align}

Now, we focus on [Term $2$] and analyze the related term 
   $\mathbb{E}\bigr[ \mathbb{I}\{J_t\}\cdot (f_t(G^{(t)})-f_t(G^* ))~|~\mathcal{G}_t\bigr]$. We first recall \eqref{relationship} to yield 
\begin{align*}
   & \mathbb{E}[f_t(G^{(t)})-f_t(G^* )~|~\mathcal{G}_t]\\& = \mathbb{E}\bigr[ \|(G^{(t)}-G^*)\mathbf{U}_{t+k-1}^{(k)}+v_{t+k-1} +CA^{k-1} x_{t} \|_2\\&\hspace{45mm} - \|v_{t+k-1} +CA^{k-1} x_{t} \|_2 ~\bigr|~\mathcal{G}_t\bigr]\\&\geq
    \mathbb{E}\bigr[\mathbb{I}\{v_{t+k-1}=0\}\cdot(\|(G^{(t)}-G^*)\mathbf{U}_{t+k-1}^{(k)} +CA^{k-1} x_{t} \|_2 \\&\hspace{57mm} - \|CA^{k-1} x_{t} \|_2 )~\bigr|~ \mathcal{G}_t\bigr] \\&\hspace{5mm}+ 
     \mathbb{E}\bigr[- \mathbb{I}\{v_{t+k-1}\neq 0\} \cdot \|(G^{(t)}-G^*)\mathbf{U}_{t+k-1}^{(k)}\|_2  ~\bigr|~\mathcal{G}_t\bigr]\\&\geq (1-q)\mathbb{E}[\|(G^{(t)}-G^*)\mathbf{U}_{t+k-1}^{(k)}\|_2~|~\mathcal{G}_t]\\&\hspace{1mm}-(1-q)\mathbb{E}[\|CA^{k-1} x_{t}\|_2|\mathcal{G}_t]- q\mathbb{E}[\|(G^{(t)}-G^*)\mathbf{U}_{t+k-1}^{(k)}\|_2|\mathcal{G}_t]\\&\geq(1-2q)\cdot \|G^{(t)}-G^*\|_F \cdot  \sigma \sqrt{\frac{2}{\pi}}-\mathbb{E}[\|CA^{k-1}x_t\|_2~|~\mathcal{G}_t],
\end{align*}
where the first inequality is due to the triangle inequality in the case of $\{v_{t+k-1}\neq 0\}$, and the second inequality follows from  the fact that 
\begin{align*}
&\mathbb{E}[\|(G^{(t)}-G^*)\mathbf{U}_{t+k-1}^{(k)} +CA^{k-1} x_{t}\|_2 ~|~\mathcal{G}_t]\\&\hspace{35mm}\geq \mathbb{E}[\|(G^{(t)}-G^*)\mathbf{U}_{t+k-1}^{(k)}\| ~|~\mathcal{G}_t]
\end{align*}
due to 
the symmetry of a zero-mean Gaussian variable $\mathbf{U}_{t+k-1}^{(k)}=[u_{t+k-1}, \dots, u_{t}]$ being independent of  $\mathcal{G}_t$ and $x_t$. Note that the event $\{v_{t+k-1}=0\}$ including the event $\{\xi_{t+k-2}=0, \dots, \xi_t = 0\}$ applies to relate the expression with $q$, similar to the approach in \eqref{fz}.

Noting that \eqref{xtcondition} holds under $J_t$, we finally arrive at
\begin{align*}
   & \mathbb{E}\bigr[ \mathbb{I}\{J_t\}\cdot (f_t(G^{(t)})-f_t(G^* ))~|~\mathcal{G}_t\bigr]\\&\geq  \mathbb{E}\biggr[ \mathbb{I}\{J_t\}\cdot \Bigr((1-2q)\cdot \|G^{(t)}-G^*\|_F \cdot  \sigma \sqrt{\frac{2}{\pi}}\\&\hspace{30mm}-\mathbb{E}[\|CA^{k-1}x_t\|_2~|~\mathcal{G}_t]\Bigr)~\biggr|~\mathcal{G}_t\biggr]\\ &\geq\mathbb{I}\{J_t\}\cdot \Bigr( \|G^{(t)}-G^*\|_F\cdot (1-2q)  \sigma\sqrt{\frac{1}{2\pi}}\Bigr).\numberthis\label{lbft}
\end{align*}

We now take expectation of both sides of \eqref{noexp} conditioned on $\mathcal{G}_t$, which leads to
\begin{align*}
    &\mathbb{E}\bigr[\mathbb{I}\{J_t\}\cdot\|G^{(t+k)}-G^*\|_F^2~|~ \mathcal{G}_t\bigr] \\&= \mathbb{I}\{J_t\}\cdot\|G^{(t)}-G^*\|_F^2  \\&\hspace{4mm}- (2\alpha-\alpha^2) \mathbb{E}\left[ \frac{\mathbb{I}\{J_t\}^2\cdot\langle g^{(t)}, G^{(t)}-G^*\rangle ^2}{\|g^{(t)}\|_F^2}~\Biggr|~\mathcal{G}_t\right] \\&\leq \mathbb{I}\{J_t\}\cdot\|G^{(t)}-G^*\|_F^2 \\&\hspace{4mm} -  (2\alpha-\alpha^2)\frac{\mathbb{E}\bigr[\mathbb{I}\{J_t\}\cdot\langle g^{(t)}, G^{(t)}-G^*\rangle ~|~\mathcal{G}_t\bigr]^2}{\mathbb{E}[\|g^{(t)}\|_F^2~|~\mathcal{G}_t]} \\&\leq \mathbb{I}\{J_t\}\cdot\|G^{(t)}-G^*\|_F^2 \\&\hspace{4mm} -  (2\alpha-\alpha^2)\frac{\mathbb{E}\bigr[\mathbb{I}\{J_t\}\cdot  (f_t(G^{(t)})-f_t(G^* )) ~|~\mathcal{G}_t\bigr]^2}{\mathbb{E}[\|g^{(t)}\|_F^2~|~\mathcal{G}_t]},\numberthis\label{inequality1}
\end{align*}
where the equality uses $\mathbb{I}\{J_t\}^2 = \mathbb{I}\{J_t\}$ and the first inequality follows from the Cauchy-Schwarz inequality. 
The last inequality is due to 
\begin{align*}
    \mathbb{E}\bigr[ \mathbb{I}\{J_t\} \cdot&\langle g^{(t)}, G^{(t)}-G^*\rangle ~|~\mathcal{G}_t \bigr]\\&\geq \mathbb{E}\bigr[ \mathbb{I}\{J_t\} \cdot(f_t(G^{(t)})-f_t(G^* )) ~|~\mathcal{G}_t \bigr]\geq 0,
\end{align*}
which follows from the convexity of $f_t$ and its subgradient inequality  $f_t(G^*) - f_t(G^{(t)})\geq\langle g^{(t)}, G^*-G^{(t)}\rangle$, while the nonnegativity is guaranteed by \eqref{lbft}. 
Moreover, we have that 
\begin{align*}
    \mathbb{E}[\|g^{(t)}\|_F^2~|~\mathcal{G}_t] &\leq \sup_{\|e\|\leq 1 } \mathbb{E}\left[\bigr\| e (\mathbf{U}_{t+k-1}^{(k)})^T\bigr\|_F^2
\right] \\&= \sup_{\|e\|\leq 1} \mathbb{E}\left[\text{trace}\bigr(e (\mathbf{U}_{t+k-1}^{(k)})^T\mathbf{U}_{t+k-1}^{(k)}e^T\bigr)\right]\\&\leq \sup_{\|e\|\leq 1 }\mathbb{E}\Bigr[(\mathbf{U}_{t+k-1}^{(k)})^T\mathbf{U}_{t+k-1}^{(k)}\cdot e^T e\Bigr] \\&\leq \mathbb{E}\Bigr[(\mathbf{U}_{t+k-1}^{(k)})^T\mathbf{U}_{t+k-1}^{(k)}\Bigr] 
\leq \sigma^2 mk,\numberthis\label{ubgtsq}
\end{align*}
where we used the cyclic property of trace. We apply \eqref{lbft} and \eqref{ubgtsq} to the inequality \eqref{inequality1} to obtain
\begin{align*}
    &[\text{Term~}2]=\mathbb{E}\bigr[\mathbb{I}\{J_t\}\cdot\|G^{(t+k)}-G^*\|_F^2~|~ \mathcal{G}_t\bigr] \\&\leq \mathbb{I}\{J_t\}\cdot \|G^{(t)}-G^*\|_F^2 \\&\hspace{10mm}- (2\alpha-\alpha^2)\frac{\mathbb{I}\{J_t\}^2\cdot \|G^{(t)}-G^*\|_F^2 \cdot (1-2q)^2 \sigma^2 }{2\pi \cdot \sigma^2 mk}\\&\leq\mathbb{I}\{J_t\}\cdot  \left(1-\frac{(1-2q)^2  (2\alpha-\alpha^2)}{2\pi \cdot mk}\right)\|G^{(t)}-G^*\|_F^2. \numberthis\label{term2}
\end{align*}

Now, we are ready to integrate $[\text{Term~}1]$ and $[\text{Term~}2]$ via \eqref{decomposition}.
For simplicity, define $\bar{c}:= \frac{(1-2q)^2  (2\alpha-\alpha^2)}{2\pi \cdot mk}$. Then, considering \eqref{term1} and \eqref{term2}, we arrive at 
\begin{align*}
&\mathbb{E}[\|G^{(t+k)}-G^*\|_F^2~|~\mathcal{G}_t] = [\text{Term~}1]+[\text{Term~}2]\\&\leq \mathbb{I}(J_t^c)\cdot \|G^{(t)}-G^*\|_F^2 + \left(1-\bar c\right)\cdot\mathbb{I}\{J_t\}  \|G^{(t)}-G^*\|_F^2 \\
&= ((1-\bar c)+\bar c)\cdot\mathbb{I}(J_t^c)  \|G^{(t)}-G^*\|_F^2 \\&\hspace{40mm}+ \left(1-\bar c\right)\cdot \mathbb{I}\{J_t\}  \|G^{(t)}-G^*\|_F^2
\\&= \left(1-\bar c \right)\|G^{(t)}-G^*\|_F^2+\bar c \cdot \mathbb{I}(J_t^c) \|G^{(t)}-G^*\|_F^2,\numberthis\label{barc}
\end{align*}
where we use $\mathbb{I}\{J_t^c\}+\mathbb{I}\{J_t\}=1$ for the last equality. 
Note that $J_t^c$ is the complement of \eqref{xtcondition}. Then, we have
\begin{align*}
    \mathbb{I}(J_t^c) \|G^{(t)}-G^*&\|_F^2\leq  \mathbb{I}(J_t^c)\biggr(\frac{\sqrt{2\pi}}{(1-2q)\sigma}\mathbb{E}[\|CA^{k-1} x_{t}\|_2~|~\mathcal{G}_t]\biggr)^2\\&\leq \mathbb{I}(J_t^c)\biggr(\frac{\sqrt{2\pi}}{(1-2q)\sigma}\biggr)^2\mathbb{E}[\| CA^{k-1}x_{t}\|_2^2~|~\mathcal{G}_t] \\&\leq\biggr(\frac{\sqrt{2\pi}}{(1-2q)\sigma}\|C\|_2\|A^{k-1}\|_2\biggr)^2\mathbb{E}[\| x_{t}\|_2^2~|~\mathcal{G}_t],
\end{align*}
where we use Jensen's inequality and  $\mathbb{I}(J_t^c)\leq 1$. Substituting this inequality into \eqref{barc} and recursively applying the tower rule with respect to $\mathcal{G}_{t-k}, \dots,\mathcal{G}_{0}$ finally yields
\begin{align*}
    &\mathbb{E}[\|G^{(t)}-G^*\|_F^2] \\&\leq 
   (1-\bar c)^{t/k} \|G^{(0)}-G^*\|_F^2+ \biggr(\frac{\sqrt{2\pi}}{(1-2q)\sigma}\|C\|_2\|A^{k-1}\|_2\biggr)^2\\&\hspace{45mm}\times  \bar c \sum_{i=1}^{t/k} (1-\bar c)^{i-1} \mathbb{E}[\|x_{t-ki}\|_2^2]\\&\leq (1-\bar c)^{t/k} \|G^{(0)}-G^*\|_F^2+ \biggr(\frac{\sqrt{2\pi}}{(1-2q)\sigma}\|C\|_2\|A^{k-1}\|_2\biggr)^2\\&\hspace{50mm}\times  \max_{i\in \{1,\dots, t/k\}} \mathbb{E}[\|x_{t-ki}\|_2^2].
\end{align*}
We leverage \eqref{expxt} in Lemma \ref{sumxt} to bound $\max_i\mathbb{E}[\|x_{t-ki}\|_2^2]$ and complete the proof.
\end{proof}


\begin{theorem}\label{2conv}
    Using Algorithm \ref{Algorithm 2}, we achieve the same result of the estimation error given in Theorem \ref{linconv}. 
\end{theorem}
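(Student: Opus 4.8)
The plan is to follow the proof of Theorem~\ref{linconv} essentially line for line, since Algorithm~\ref{Algorithm 2} differs from Algorithm~\ref{Algorithm 1} only in the scalar $\theta_t$ computed in Line~\ref{stepsize}; the selection $\gamma_t\in[\alpha\theta_t,(2-\alpha)\theta_t]$ and the update $G^{(t+k)}=G^{(t)}-\gamma_t g^{(t)}$ are unchanged, and footnote~2 still guarantees $g^{(t)}\neq 0$ so that $\theta_t$ is well defined. First I would re-derive the one-step descent inequality replacing \eqref{noexp}. Expanding $\|G^{(t+k)}-G^*\|_F^2$ exactly as in \eqref{optimal} and splitting on the two branches of the Polyak rule: if $f_t(G^{(t)})\le f_t(G^*)$ then $\theta_t=\gamma_t=0$ and the iterate is stationary; if $f_t(G^{(t)})>f_t(G^*)$ then $\theta_t=(f_t(G^{(t)})-f_t(G^*))/\|g^{(t)}\|_F^2$, and writing $\gamma_t=\beta\theta_t$ with $\beta\in[\alpha,2-\alpha]$, combining the subgradient inequality $f_t(G^{(t)})-f_t(G^*)\le\langle g^{(t)},G^{(t)}-G^*\rangle$ with $2\beta-\beta^2=1-(1-\beta)^2\ge 2\alpha-\alpha^2$ yields
\[
\|G^{(t+k)}-G^*\|_F^2\le\|G^{(t)}-G^*\|_F^2-(2\alpha-\alpha^2)\frac{(\max\{f_t(G^{(t)})-f_t(G^*),\,0\})^2}{\|g^{(t)}\|_F^2}.
\]
In particular the error never increases, which is all that is needed to retain the bound \eqref{term1} on $[\text{Term }1]$.

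Next I would reuse the entire decomposition apparatus of Theorem~\ref{linconv} without change: the same $\mathcal{G}_t$-measurable event $J_t$ of \eqref{xtcondition}, the same split \eqref{decomposition}, and the same handling of $[\text{Term }1]$. For $[\text{Term }2]$ I would take the $\mathbb{I}\{J_t\}$-weighted conditional expectation of the new descent inequality, apply the Cauchy--Schwarz step of \eqref{inequality1} with $\mathbb{I}\{J_t\}\max\{f_t(G^{(t)})-f_t(G^*),0\}/\|g^{(t)}\|_F$ now playing the role of $\mathbb{I}\{J_t\}\langle g^{(t)},G^{(t)}-G^*\rangle/\|g^{(t)}\|_F$, and then invoke the pointwise bound $\max\{f_t(G^{(t)})-f_t(G^*),0\}\ge f_t(G^{(t)})-f_t(G^*)$ to reduce the numerator to $\mathbb{E}[\mathbb{I}\{J_t\}(f_t(G^{(t)})-f_t(G^*))\mid\mathcal{G}_t]^2$. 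This is precisely the quantity already lower bounded in \eqref{lbft}, while the denominator is upper bounded by $\sigma^2 mk$ via \eqref{ubgtsq}; hence $[\text{Term }2]$ obeys the same estimate \eqref{term2}, namely $[\text{Term }2]\le\mathbb{I}\{J_t\}(1-\bar c)\|G^{(t)}-G^*\|_F^2$ with $\bar c=\frac{(1-2q)^2(2\alpha-\alpha^2)}{2\pi\,mk}$. From this point on, equation \eqref{barc}, the Jensen-plus-Assumption~\ref{stability} bound on $\mathbb{I}(J_t^c)\|G^{(t)}-G^*\|_F^2$, the iterated tower rule, and the appeal to \eqref{expxt} of Lemma~\ref{sumxt} are verbatim, producing the identical final bound on $\mathbb{E}[\|G^{(t)}-G^*\|_F^2]$.

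The only genuinely new element is the truncation $\max\{\cdot,0\}$ in the Polyak step, and this is the step I expect to need the most care. The concern is that clipping could weaken the per-step progress enough to spoil the linear-rate term, but it does not: in the clipped regime the iterate does not move, so the error trivially does not increase; and the elementary inequality $\max\{x,0\}\ge x$ ensures that after taking the $\mathbb{I}\{J_t\}$-weighted conditional expectation the progress term is still at least $\mathbb{E}[\mathbb{I}\{J_t\}(f_t(G^{(t)})-f_t(G^*))\mid\mathcal{G}_t]^2/\mathbb{E}[\|g^{(t)}\|_F^2\mid\mathcal{G}_t]$, exactly the term already controlled in the proof of Theorem~\ref{linconv}, which is nonnegative by \eqref{lbft} so that the Cauchy--Schwarz and squaring manipulations remain valid. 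Consequently no new quantitative estimate is required, and Algorithms~\ref{Algorithm 1} and~\ref{Algorithm 2} enjoy identical guarantees.
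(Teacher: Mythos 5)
Your proposal is correct and follows essentially the same route as the paper's proof: derive the one-step inequality with the clipped Polyak step (showing the error never increases, so $[\text{Term }1]$ is unchanged), then apply Cauchy--Schwarz and the bound $(x)_+\ge x$ together with the nonnegativity from \eqref{lbft} to recover exactly \eqref{inequality1}, after which the argument of Theorem \ref{linconv} applies verbatim. The only cosmetic difference is that you make the dependence on $\gamma_t\in[\alpha\theta_t,(2-\alpha)\theta_t]$ explicit via $2\beta-\beta^2\ge 2\alpha-\alpha^2$, which the paper states as an equality somewhat loosely; your version is if anything slightly more careful.
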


\begin{proof}
We will use the same decomposition as in \eqref{decomposition}.     
    Using the Polyak step size only considering the positive part, \eqref{noexp} is replaced with
    \begin{align*}
       & \|G^{(t+k)}-G^*\|_F^2 \\&= \|G^{(t)}-G^*\|_F^2 -2\gamma_t \langle g^{(t)}, G^{(t)}-G^*\rangle + \gamma_t^2 \|g^{(t)}\|_F^2  \\&\leq \|G^{(t)}-G^*\|_F^2 -2\gamma_t (f_t(G^{(t)})-f_t(G^*)) + \gamma_t^2 \|g^{(t)}\|_F^2 \\&= \|G^{(t)}-G^*\|_F^2  - (2\alpha-\alpha^2)\frac{(f_t(G^{(t)})-f_t(G^*))^2}{\|g^{(t)}\|_F^2}\\&\hspace{36mm}\times \mathbb{I}\{f_t(G^{(t)})-f_t(G^*)> 0\}.\numberthis\label{lbft2}
    \end{align*}
This ensures that the estimation error still does not increase, and thus we again achieve 
$    [\text{Term~}1] \leq \mathbb{I}(J_t^c)\cdot \|G^{(t)}-G^*\|_F^2,$
as in \eqref{term1}. For $[\text{Term~}2]$, we take  expectation of both sides of \eqref{lbft2} conditioned on $\mathcal{G}_t$ and use the Cauchy-Schwarz inequality to obtain 
    \begin{align*}
           &\mathbb{E}\bigr[\mathbb{I}\{J_t\}\cdot\|G^{(t+k)}-G^*\|_F^2~|~ \mathcal{G}_t\bigr] \\&= \mathbb{I}\{J_t\}\cdot\|G^{(t)}-G^*\|_F^2  \\&\hspace{7mm}- (2\alpha-\alpha^2) \mathbb{E}\left[ \frac{\mathbb{I}\{J_t\}^2\cdot[(f_t(G^{(t)})-f_t(G^*))_+]^2 }{\|g^{(t)}\|_F^2}~\Biggr|~\mathcal{G}_t\right] \\&\leq \mathbb{I}\{J_t\}\cdot\|G^{(t)}-G^*\|_F^2 \\&\hspace{7mm} -  (2\alpha-\alpha^2)\frac{\mathbb{E}\bigr[\mathbb{I}\{J_t\}\cdot (f_t(G^{(t)})-f_t(G^*))_+ ~|~\mathcal{G}_t\bigr]^2}{\mathbb{E}[\|g^{(t)}\|_F^2~|~\mathcal{G}_t]} \\&\leq \mathbb{I}\{J_t\}\cdot\|G^{(t)}-G^*\|_F^2 \\&\hspace{7mm} -  (2\alpha-\alpha^2)\frac{\mathbb{E}\bigr[\mathbb{I}\{J_t\}\cdot  (f_t(G^{(t)})-f_t(G^* )) ~|~\mathcal{G}_t\bigr]^2}{\mathbb{E}[\|g^{(t)}\|_F^2~|~\mathcal{G}_t]},
    \end{align*}
    where $(\cdot)_+ = \max\{\cdot, 0\}$. 
   Note that we arrived at the same expression as \eqref{inequality1}. Thus, we again achieve \eqref{term2} to bound $[\text{Term~}2]$. The remaining arguments follow the same reasoning as in the proof of Theorem \ref{linconv}.
\end{proof}

\begin{remark}
    In Theorems \ref{linconv} and \ref{2conv}, noting that $\max\{X,Y\}$ and $X+Y$ are of the same order, we showed that both Algorithms \ref{Algorithm 1} and \ref{Algorithm 2} achieve an expected estimation error of $O(\max\{ \exp(-t/k^2),\rho^k\})$, where the first term follows from  $\bigr(1-\frac{(1-2q)^2 (2\alpha-\alpha^2)}{2\pi\cdot mk}\bigr)^{t/(2k)} =\Theta( \exp(-\frac{t}{2k}\cdot \frac{(1-2q)^2 (2\alpha-\alpha^2)}{2\pi \cdot mk} ))$, considering that $-\log(1-x) = \Theta(x)$ for $0<x<\frac{1}{2\pi}$.  Note that this error bound implies that, after a time of $\Omega(k^3\log(1/\rho))$,  the error reaches $O(\rho^k)$, which corresponds to the estimation error achieved by the $\ell_2$-norm estimator in Theorem~\ref{theorem1}. On the other hand, we were only able to obtain estimates at times that are multiples of $k$, due to the dependence between $\mathbf{U}_{t}^{(k)}$ and $\mathbf{U}_{t+1}^{(k)}$. This streaming estimation is also related to the batch estimation in the sense that the Lipschitz constant of $f_Z(\mathbf{U})$ is $\sqrt{Tk}$ (see \eqref{simple} and \eqref{Tk+1}), which ultimately incurs the recovery time of the $\ell_2$-norm estimator to be $k$ times larger than in the hypothetical case where $\mathbf{U}_{t}^{(k)}$ were independent for all $t$. 
\end{remark}

Although Algorithms \ref{Algorithm 1} and \ref{Algorithm 2} achieve linear convergence until reaching an error of $O(\rho^k)$, their step sizes $\gamma_t$ are generally not fully accessible since $G^*$ is unknown. 
In the case of deterministic functions, using a geometrically decaying step size without knowledge of $G^*$ can still yield linear convergence \cite{kim2025revisit}; however, this approach does not apply to the stochastic functions considered in this paper.

We now forgo linear convergence and instead aim for sublinear convergence, requiring minimal prior knowledge of $G^*$. Applying classical subgradient descent method (see \cite{boyd2004convex, nesterov2004intro}) with a step size of $\Theta(1/t)$ at time $t$ allows us to bound the minimum suboptimality gap—the difference between the objective values of the current estimate and the true solution. However, this approach only guarantees the minimum gap, leaving uncertainty about whether the current estimate is sufficiently accurate. More importantly, it provides a bound on the suboptimality gap rather than on the estimation error itself. Estimating the error directly is a central focus of this paper, as our ultimate goal is to use the estimate for controller synthesis with small parameter errors.
To this end, we present Algorithm \ref{Algorithm 3}, which incorporates a projection step into the subgradient descent algorithm, where $\Pi_{\bar G}(\cdot)$ is the projection of a point onto a convex set $\bar G$. 
In the next theorem, we will show that the step size of $\Theta(1/t)$ achieves sublinear convergence until the algorithm has the same exponential order with $O(\rho^{k/2})$, while only requiring an upper bound on the norm of the true Markov parameter matrix.

\begin{algorithm}[!t]
  \caption{Stochastic Projected Subgradient Descent Algorithm with non-increasing step size}

 \begin{flushleft}
        \textbf{Input:} Time Horizon $T$. Time Horizon $T$. Initialization of the Markov parameter matrix $G^{(0)} = 0$. A non-increasing step size rule $(\gamma_t)_{t\geq 0}$.  A convex set $\bar G$ that contains $G^*$. 
    
  \end{flushleft}

  \begin{algorithmic}[1]
   \FOR{Time $t=0,~k,~2k,\dots$}
        \STATE Consider $f_t(G) = 
        \|y_{t+k-1} - G\mathbf{U}_{t+k-1}^{(k)}\|_2$.
        \STATE Select $g^{(t)} \in \partial f_t(G^{(t)})$.

        \STATE Select the step size $\gamma_t\geq 0.$ 
        \STATE Let $G^{(t+k)} = \Pi_{\bar G}(G^{(t)} - \gamma_t g^{(t)}).$
    \ENDFOR 
  \end{algorithmic}
  \begin{flushleft}
        \textbf{Output:} Estimates at times that are multiples of $k$: $G^{(k)}, G^{(2k)}, G^{(3k)}, \dots$.  
  \end{flushleft}

  \label{Algorithm 3}
\end{algorithm}

\begin{theorem}\label{theoremalgo3}
   Suppose that all the assumptions of Theorem \ref{theorem1} hold.  Assume that $\|G^*\|_F\leq R$ for a given $R>0$.  
For all $t\geq 0$ that are multiples of $k$, let $G^{(t)}$ denote the estimate produced by Algorithm \ref{Algorithm 3} at time $t$, with the set  $\bar G = \{G: \|G\|_F\leq R\}$ and 
    the step size $\gamma_t = \frac{k\beta}{t+k}$, where $\beta> \frac{\sqrt{2\pi}R}{(1-2q)\sigma}$.
    Then, we have
\begin{align}\label{inductionstep}
\nonumber&\mathbb{E}[\|G^{(t)}-G^*\|_F^2]= O\Bigr(\frac{\rho^{k-1}\nu}{1-\rho}\cdot\Bigr( \frac{R\sqrt{mk}}{1-2q}+\frac{\rho^{k-1}\nu}{1-\rho}\Bigr)\\&\hspace{63mm}+ \frac{kE}{t+k} \Bigr), 
\end{align}
where $ E= \max\left\{\|G^{(0)}-G^*\|_F^2, \frac{\beta^2 \sigma^2 mk}{\beta(1-2q)\sigma /(\sqrt{2\pi}R) -1}\right\}$ and $\nu=\frac{\|C\|_2}{1-2q}\left(\frac{\eta}{\sigma}+\sqrt{m}\|B\|_2\right)$. 
\end{theorem}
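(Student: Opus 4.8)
The plan is to reuse the one-step machinery from the proof of Theorem~\ref{linconv}, now with the deterministic step size $\gamma_t=k\beta/(t+k)$, and to finish with an induction on $n:=t/k$. Since $G^*\in\bar G$ and Euclidean projection onto a convex set is non-expansive, the update gives $\|G^{(t+k)}-G^*\|_F^2\le\|G^{(t)}-G^*\|_F^2-2\gamma_t\langle g^{(t)},G^{(t)}-G^*\rangle+\gamma_t^2\|g^{(t)}\|_F^2$. Convexity of $f_t$ gives $\langle g^{(t)},G^{(t)}-G^*\rangle\ge f_t(G^{(t)})-f_t(G^*)$, and — exactly as in Theorem~\ref{linconv}, via $y_{t+k-1}=G^*\mathbf U_{t+k-1}^{(k)}+v_{t+k-1}+CA^{k-1}x_t$, the inclusion $\{v_{t+k-1}=0\}\supseteq\{\xi_t=\cdots=\xi_{t+k-2}=0\}$, independence of $\mathbf U_{t+k-1}^{(k)}$ from $\mathcal G_t$, and $\mathbb E\|Z\mathbf U_{t+k-1}^{(k)}\|_2=\sigma\sqrt{2/\pi}\,\|Z\|_F$ for rank-one $Z$ — one obtains $\mathbb E[f_t(G^{(t)})-f_t(G^*)\mid\mathcal G_t]\ge a\|G^{(t)}-G^*\|_F-b_t$, with $a:=(1-2q)\sigma\sqrt{2/\pi}$ and $b_t:=\mathbb E[\|CA^{k-1}x_t\|_2\mid\mathcal G_t]$. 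Together with $\mathbb E[\|g^{(t)}\|_F^2\mid\mathcal G_t]\le\sigma^2mk$ (since $\|g^{(t)}\|_F\le\|\mathbf U_{t+k-1}^{(k)}\|_2$, cf.~\eqref{ubgtsq}) this yields the conditional recursion $\mathbb E[\|G^{(t+k)}-G^*\|_F^2\mid\mathcal G_t]\le\|G^{(t)}-G^*\|_F^2-2\gamma_t\big(a\|G^{(t)}-G^*\|_F-b_t\big)+\gamma_t^2\sigma^2mk$.

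Next I convert the linear (sharp) lower bound into a squared-error contraction, using the same split $J_t:=\{\|G^{(t)}-G^*\|_F\ge 2b_t/a\}$ as in Theorem~\ref{linconv} together with the a priori bound $\|G^{(t)}-G^*\|_F\le 2R$ (from the projection and $\|G^*\|_F\le R$). On $J_t$, $a\|G^{(t)}-G^*\|_F-b_t\ge\tfrac12 a\|G^{(t)}-G^*\|_F\ge\tfrac{a}{4R}\|G^{(t)}-G^*\|_F^2$, which produces a contraction factor $1-\gamma_t a/(2R)$; on $J_t^c$ I only use $\|G^{(t+k)}-G^*\|_F\le\|G^{(t)}-G^*\|_F+\gamma_t\|g^{(t)}\|_F$ with $\|G^{(t)}-G^*\|_F<2b_t/a$, retaining the cross term $\gamma_t\|g^{(t)}\|_F\|G^{(t)}-G^*\|_F$. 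The key bookkeeping is that, when taking total expectations, I keep the $J_t^c$-part attached to the contraction by writing $\mathbb E[\mathbb I\{J_t\}\|G^{(t)}-G^*\|_F^2]\ge\mathbb E\|G^{(t)}-G^*\|_F^2-4\mathbb E[b_t^2]/a^2$; this makes every $b_t$-term carry a factor $\gamma_t$, hence be summable, rather than contribute a persistent constant. Bounding $\mathbb E[b_t]$ and $\mathbb E[b_t^2]$ through $\|A^{k-1}\|_2\le\psi\rho^{k-1}$, Jensen, and $\mathbb E\|x_t\|_2^2=O((\tfrac{\eta+\sigma\sqrt m\|B\|_2}{1-\rho})^2)$ from~\eqref{expxt}, and writing $\delta_n:=\mathbb E\|G^{(nk)}-G^*\|_F^2$, $\gamma_{nk}=\beta/(n+1)$, I arrive at a scalar recursion of the form
$$\delta_{n+1}\le\Big(1-\tfrac{\tilde\kappa}{n+1}\Big)_{+}\delta_n+\frac{A_0}{n+1}+\frac{\beta^2\sigma^2mk}{(n+1)^2},\qquad\tilde\kappa:=\frac{\beta a}{2R}=\frac{\beta(1-2q)\sigma}{\sqrt{2\pi}R},$$
where $A_0$ is chosen so that $A_0/\tilde\kappa$ equals, up to absolute constants, the floor $\tfrac{\rho^{k-1}\nu}{1-\rho}\big(\tfrac{R\sqrt{mk}}{1-2q}+\tfrac{\rho^{k-1}\nu}{1-\rho}\big)$ of~\eqref{inductionstep} — its two pieces coming respectively from the $J_t^c$ cross term ($\sqrt{mk}/(1-2q)$) and from the $\mathbb E[b_t^2]$ term (the square). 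The hypothesis $\beta>\sqrt{2\pi}R/((1-2q)\sigma)$ is precisely $\tilde\kappa>1$, which is what lets a $\Theta(1/n)$ step size produce $\Theta(1/n)$ decay.

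I then close by induction on $n$, claiming $\delta_n\le F+\tfrac{kE}{nk+k}=F+\tfrac{E}{n+1}$ with $F:=A_0/\tilde\kappa$ and $E$ as in the statement; note $\tilde\kappa-1=\beta(1-2q)\sigma/(\sqrt{2\pi}R)-1>0$, so $E\ge\beta^2\sigma^2mk/(\tilde\kappa-1)$. The base case is $\delta_0=\|G^{(0)}-G^*\|_F^2\le E\le F+E$. For the inductive step with $n+1\ge\tilde\kappa$, substituting the hypothesis into the recursion and using $\tfrac1{n+1}-\tfrac1{n+2}=\tfrac1{(n+1)(n+2)}\le\tfrac1{(n+1)^2}$ reduces the required inequality to $\beta^2\sigma^2mk-(\tilde\kappa-1)E\le(\tilde\kappa F-A_0)(n+1)$, which holds because $\tilde\kappa F=A_0$ and $E\ge\beta^2\sigma^2mk/(\tilde\kappa-1)$. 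For the finitely many steps with $n+1<\tilde\kappa$ the contraction coefficient is $0$, so $\delta_{n+1}\le A_0/(n+1)+\beta^2\sigma^2mk/(n+1)^2$ and there the claim is verified directly, the $\Theta(R^2mk/(1-2q)^2)$-size terms being absorbed by $E/(n+1)$; substituting $F$ and $E$ gives~\eqref{inductionstep}.

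I expect the delicate point to be exactly the bookkeeping that prevents the $b_t$-terms from accumulating: a naive treatment of $J_t^c$ leaves a non-decaying additive constant $\Theta(\mathbb E[b_t^2]/a^2)$ in the scalar recursion, and against a vanishing rate $\tilde\kappa/(n+1)$ this alone would force $\delta_n$ to grow like $n$. Rewriting $\mathbb E[\mathbb I\{J_t\}\|G^{(t)}-G^*\|_F^2]$ through $\delta_n$ and subtracting $4\mathbb E[b_t^2]/a^2$, while retaining the $J_t^c$ cross term, is what makes the two floor terms emerge with the precise shapes in~\eqref{inductionstep}. A secondary nuisance is pinning down the constant in $E$: using only half of the sharpness on $J_t$ (the source of the $\tfrac12$ in $\tilde\kappa$, equivalently the $\sqrt{2\pi}$ in the hypothesis on $\beta$) is what makes the clean choice $E=\max\{\|G^{(0)}-G^*\|_F^2,\ \beta^2\sigma^2mk/(\tilde\kappa-1)\}$ work, and the early iterations $n+1<\tilde\kappa$ must be checked separately.
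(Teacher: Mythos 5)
Your proposal is correct and follows essentially the same route as the paper's proof: projection non-expansiveness, the same event split $J_t$ defined by \eqref{xtcondition}, the lower bound \eqref{lbft} combined with $\|G^{(t)}-G^*\|_F\le 2R$ to obtain the contraction factor $1-\gamma_t(1-2q)\sigma/(\sqrt{2\pi}R)$, the subgradient second-moment bound \eqref{ubgtsq}, the same attachment of a $\gamma_t$ factor to the $b_t$-floor terms on $J_t^c$, and the same induction with the same $E$ exploiting $\beta>\sqrt{2\pi}R/((1-2q)\sigma)$. Your explicit $(\cdot)_+$ handling of the early iterations where $1-\tilde\kappa/(n+1)<0$ is a small refinement of a point the paper glosses over, but it does not alter the argument.
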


\begin{proof}
Following the step size rule, we have
\begin{align*}
    &\|G^{(t+k)}-G^*\|_F^2 = \|\Pi_{\bar G} (G^{(t)}-\gamma_t g^{(t)})-\Pi_{\bar G}(G^*)\|_F^2\\&\leq \|G^{(t)}-\gamma_t g^{(t)}-G^*\|_F^2\numberthis\label{intere}\\&\leq (\|G^{(t)}-G^*\|_F + \gamma_t \|g^{(t)}\|_F)^2  \\&= \|G^{(t)}-G^*\|_F^2 + 2\gamma_t \|g^{(t)}\|_F \|G^{(t)}-G^*\|_F + \gamma_t^2 \|g^{(t)}\|_F^2,\numberthis\label{generic}
\end{align*}
where we use that $\Pi_{\bar G}(G^*) = G^*$ and the contraction of the projection over the convex set $\bar G$. 
In the remaining proof, we will use the same definition of $J_t$ as the $\mathcal{G}_t$-measurable event that \eqref{xtcondition} happens, and the same decomposition \eqref{decomposition}.

First, $[\text{Term~}1]$ can be bounded by the generic inequality \eqref{generic} as
\begin{align*}
    [\text{Term~}&1] =  \mathbb{E}\bigr[\mathbb{I}\{J_t^c\}\cdot\|G^{(t+k)} -  G^*\|_F^2~|~\mathcal{G}_t\bigr]\\&\leq \mathbb{E}\bigr[\mathbb{I}\{J_t^c\}\cdot\bigr( \|G^{(t)}-G^*\|_F^2 \\&\hspace{10mm}+ 2\gamma_t \|g^{(t)}\|_F \|G^{(t)}-G^*\|_F + \gamma_t^2 \|g^{(t)}\|_F^2\bigr)~|~\mathcal{G}_t\bigr]\\&=\mathbb{I}\{J_t^c\}\cdot (\|G^{(t)}-G^*\|_F^2 \\&\hspace{10mm}+2\gamma_t \sigma\sqrt{mk} \|G^{(t)}-G^*\|_F + \gamma_t^2 \sigma^2 mk),\numberthis\label{term11}
\end{align*}
where we used \eqref{ubgtsq} together with Jensen's inequality to derive an upper bound of $\mathbb{E}[\|g^{(t)}\|_F~|~\mathcal{G}_t]$. 

For [$\text{Term~}2$], we first note that due to the projection and an upper bound of the norm of $G^*$, we have
\begin{align*}
\|G^{(t)}-G^*\|_F \leq \|G^{(t)}\|_F + \|G^*\|_F\leq 2R.
\end{align*}
Then, from \eqref{intere}, we have
    \begin{align*}
        &[\text{Term~}2]=\mathbb{E}\bigr[\mathbb{I}(J_t)\cdot \|G^{(t+k)}-G^*\|_F^2 ~|~\mathcal{G}_t\bigr] 
\\&\leq \mathbb{E}\bigr[\mathbb{I}(J_t)\cdot \|G^{(t)}-\gamma_t g^{(t)}-G^*\|_F^2 ~|~\mathcal{G}_t] 
\\&\leq \mathbb{I}(J_t)\cdot \Bigr(\|G^{(t)}-G^*\|_F^2+\gamma_t^2  \mathbb{E}[\|g^{(t)}\|_F^2~|~\mathcal{G}_t] \Bigr) \\&\hspace{27mm}-2\gamma_t \mathbb{E}[\mathbb{I}(J_t)\cdot \langle g^{(t)}, G^{(t)}-G^*\rangle ~|~\mathcal{G}_t]
\\&\leq \mathbb{I}(J_t)\cdot \bigr(\|G^{(t)}-G^*\|_F^2+\gamma_t^2 \sigma^2mk \bigr) \\&\hspace{27mm}-2\gamma_t \mathbb{E}[\mathbb{I}(J_t)\cdot (f_t(G^{(t)})-f_t(G^*)) ~|~\mathcal{G}_t]
  \\&\leq \mathbb{I}(J_t)\Bigr(\|G^{(t)}-G^*\|_F^2  + \gamma_t^2 \sigma^2 mk\\&\hspace{35mm}-2\gamma_t \|G^{(t)}-G^*\|_F\frac{(1-2q)\sigma}{\sqrt{2\pi}} \Bigr)\\& \leq \mathbb{I}(J_t)\Bigr(\|G^{(t)}-G^*\|_F^2+ \gamma_t^2 \sigma^2 mk \\&\hspace{35mm}-2\gamma_t \frac{\|G^{(t)}-G^*\|_F^2}{2R}\frac{(1-2q)\sigma}{\sqrt{2\pi}} \Bigr) \\&= \mathbb{I}(J_t)\Bigr[\Bigr(1-\gamma_t \frac{(1-2q)\sigma}{\sqrt{2\pi}R}\Bigr) \|G^{(t)}-G^*\|_F^2 + \gamma_t^2 \sigma^2 mk\Bigr],\numberthis\label{term22}
    \end{align*}
    where the third inequality is due to the subgradient inequality $f_t(G^*) - f_t(G^{(t)})\geq\langle g^{(t)}, G^*-G^{(t)}\rangle$, the fourth inequality directly follows from \eqref{lbft} used in the proof of Theorem \ref{linconv}, and the last inequality is due to $\|G^{(t)}-G^*\|\leq 2R$. Integrating \eqref{term11} and \eqref{term22}, we arrive at
    \begin{align*}
       & \mathbb{E}[\|G^{(t+k)}-G^*\|_F^2~|~\mathcal{G}_t] = [\text{Term~}1]+[\text{Term~}2]\\&\hspace{0mm}=\Bigr(1-\gamma_t \frac{(1-2q)\sigma}{\sqrt{2\pi}R}\Bigr) \|G^{(t)}-G^*\|_F^2 + \gamma_t^2 \sigma^2 mk \\&\hspace{10mm}+ \mathbb{I}(J_t^c)\cdot \Bigr( \gamma_t \frac{(1-2q)\sigma}{\sqrt{2\pi}R} \|G^{(t)}-G^*\|_F^2\\&\hspace{35mm}+2\gamma_t \sigma \sqrt{mk} \|G^{(t)}-G^*\|_F \Bigr)\\&\hspace{0mm}\leq\Bigr(1-\gamma_t \frac{(1-2q)\sigma}{\sqrt{2\pi}R}\Bigr) \|G^{(t)}-G^*\|_F^2 + \gamma_t^2 \sigma^2 mk \\&\hspace{5mm}+ \mathbb{I}(J_t^c)\cdot \Bigr( \gamma_t \frac{(1-2q)\sigma}{\sqrt{2\pi}R} \Bigr(\frac{\sqrt{2\pi}}{(1-2q)\sigma}\Bigr)^2\mathbb{E}[\|CA^{k-1} x_{t}\|_2^2~|~\mathcal{G}_t]\\&\hspace{20mm}+2\gamma_t \sigma \sqrt{mk} \frac{\sqrt{2\pi}}{(1-2q)\sigma}\mathbb{E}[\|CA^{k-1} x_{t}\|_2~|~\mathcal{G}_t] \Bigr),
    \end{align*}
    where the equality follows from  $\mathbb{I}(J_t)+\mathbb{I}(J_t^c)=1$, and the inequality is derived from considering the event $J_t^c$ and Jensen's inequality. 
    Since $\mathbb{I}(J_t^c)\leq 1$, taking full expectation and substituting the step size $\gamma_t = \frac{k\beta}{t+k}$ yields 
    \begin{align*}
        &\mathbb{E}[\|G^{(t+k)}-G^*\|_F^2 ]\\&\leq \Bigr(1- \frac{k\beta(1-2q)\sigma}{(t+k) \sqrt{2\pi}R}\Bigr) \mathbb{E}[\|G^{(t)}-G^*\|_F^2] + \frac{\beta^2 \sigma^2 mk^3}{(t+k)^2}\\&\hspace{2mm}+\frac{k\beta}{t+k}\cdot O\Bigr( \frac{(1-2q)\sigma}{\sqrt{2\pi}R} \Bigr(\frac{\rho^{k-1}\nu}{1-\rho}\Bigr)^2 + 2 \sigma \sqrt{mk}\Bigr(\frac{\rho^{k-1}\nu}{1-\rho}\Bigr)\Bigr),\numberthis\label{induction2}
    \end{align*}
    where we leverage \eqref{expxt} in Lemma \ref{sumxt} to bound  $\mathbb{E}[\|x_{t}\|_2]$ and  $\mathbb{E}[\|x_{t}\|_2^2]$. Now, it follows by induction to prove \eqref{inductionstep}. For a base case, $t=0$ is trivial. For an induction hypothesis, suppose that \eqref{inductionstep} holds for $t$. Then, \eqref{induction2} turns out to be
      \begin{align*}
        &\mathbb{E}[\|G^{(t+k)}-G^*\|_F^2 ]\\&\leq \underbrace{\Bigr(1- \frac{k\beta(1-2q)\sigma}{(t+k) \sqrt{2\pi}R}\Bigr) \cdot\frac{kE}{t+k} + \frac{\beta^2 \sigma^2 mk^3}{(t+k)^2} }_{(a)}\\&\hspace{3mm}+\underbrace{\Bigr(1- \frac{k\beta(1-2q)\sigma}{(t+k) \sqrt{2\pi}R}\Bigr)\cdot O\Bigr(\frac{\rho^{k-1}\nu}{1-\rho}\cdot \Bigr(\frac{R\sqrt{mk}}{1-2q}+\frac{\rho^{k-1}\nu}{1-\rho}\Bigr)\Bigr)}_{(b)}\\&\hspace{3mm}+\underbrace{\frac{k\beta}{t+k} \cdot O\Bigr(\frac{(1-2q)\sigma}{\sqrt{2\pi}R} \Bigr(\frac{\rho^{k-1}\nu}{1-\rho}\Bigr)^2 + 2 \sigma \sqrt{mk}\Bigr(\frac{\rho^{k-1}\nu}{1-\rho}\Bigr)\Bigr)}_{(c)},
        \end{align*}
where 
\begin{align}\label{bc}
(b)+(c) = O\Bigr(\frac{\rho^{k-1}\nu}{1-\rho}\cdot \Bigr(\frac{R\sqrt{mk}}{1-2q}+\frac{\rho^{k-1}\nu}{1-\rho}\Bigr)\Bigr)
\end{align}
and    
        \begin{align*}
        & (a)= \frac{kE}{t+2k} + \frac{k^2}{(t+k)^2}\biggr( \frac{t+k}{t+2k} E \\&\hspace{43mm}- \frac{\beta (1-2q)\sigma}{\sqrt{2\pi}R}E+ \beta^2 \sigma^2 mk\biggr) \\&\hspace{3mm}\leq \frac{kE}{t+2k} + \frac{k^2}{(t+k)^2}\left( \Bigr(1 - \frac{\beta (1-2q)\sigma}{\sqrt{2\pi}R}\Bigr)E+ \beta^2 \sigma^2 mk\right)\\&\hspace{3mm}\leq \frac{kE}{t+2k} + \frac{k^2}{(t+k)^2}\cdot 0 = \frac{kE}{t+2k},\numberthis\label{1/k}
    \end{align*}
    where the last inequality is due to the construction of $\beta$ and $E$. Considering \eqref{bc} and \eqref{1/k}  completes the induction step to prove \eqref{inductionstep}.
\end{proof}

\begin{remark}
    In Theorem \ref{theoremalgo3},
    considering the $kE$ term in \eqref{inductionstep},
    we showed that Algorithm \ref{Algorithm 3} achieve an estimation error of $O(\max\{k/\sqrt{t}, ~k^{1/4} \rho^{k/2}\})$ in expectation, where the first term polynomially decays with $t$ and the second term exponentially decays with $k$. Note that this error bound implies that, after a time of $\Omega(k^{3/2}(1/\rho)^k)$, the error reaches $O(k^{1/4}\rho^{k/2})$, which is indeed larger than $O(\rho^k)$ achieved with Algorithms \ref{Algorithm 1} and \ref{Algorithm 2}. The key distinction in the analysis is that, when either $\langle g^{(t)}, G^*\rangle$ or $f_t(G^*)$ is known,  every subgradient step guarantees a descent in the estimation error deterministically (see \eqref{noexp} or \eqref{lbft2}). In the absence of such information (which is more realistic), however, the estimation error may increase in the worst case (see \eqref{generic}), which results in the relatively weakened exponentially decaying term in $k$. 
    In the theorem,    
    we assumed the knowledge of an upper bound $R$ on the norm of the true Markov parameter matrix. Since any upper bound is acceptable, one can take a large enough value for the constant $R$. 
    In particular, we can derive an upper bound for $G^*$ via \eqref{gstar} to arrive at 
$\|G^*\|_F \leq \sqrt{ \|D\|_F^2 + \psi(k-1)\|C\|_F^2 \|B\|_F^2}$. Given any loose bounds for $\|D\|_F, \|C\|_F, \|B\|_F$, we obtain a candidate value for $R$. 
In practice, a sufficiently large $R$ and accordingly large $\beta$ are expected to work effectively for the algorithm, particularly because $\beta$ can be chosen as any value larger than the threshold $\frac{\sqrt{2\pi}R}{(1-2q)\sigma}$.
\end{remark}

\begin{remark}\label{manybatchremark}
   In the class of subgradient descent algorithms, we used a stochastic variant that considers the $\ell_2$-norm term regarding a single observation trajectory sample at each time. 
   A mini-batch variant—computing the subgradient using the averaged $\ell_2$-norm terms over a fixed number $(>1)$ of trajectory samples (\text{e.g.}, $f_t(G) = \frac{1}{\mathcal{B}}\sum_{i\in B_t} \|y_i  - G\mathbf{U}_i^{(k)}\|_2$, where $B_t\subseteq \{0, \dots, t\}$ is a mini-batch of size $\mathcal{B}$ at time $t$)—may provide additional benefits while incurring only a constant-factor slowdown in computational complexity. In particular, it could improve \eqref{ubgtsq} by reducing the expected squared norm of the subgradient when the observations are sufficiently separated over time to eliminate dependence among the
   $\mathbf{U}_t^{(k)}$'s. However, recycling past data within a single trajectory cannot fully eliminate dependence, since current estimates still rely on prior samples. A common remedy is to collect a sufficiently large dataset along the trajectory to ensure high concentration around expectation (see \cite{oymak2019stochastic} for a similar single-trajectory analysis), and then to randomly select multiple trajectory samples from it at each subgradient descent step. This approach yields estimates at all times after collection, rather than only at multiples of $k$, since the accumulated data effectively serves as an oracle of control inputs that provide sufficient excitation (as in \eqref{lbft}) while keeping the squared norm bounded (as in \eqref{ubgtsq}). Accordingly, one may use a stochastic algorithm initially and transition to a mini-batch algorithm once enough data has been collected.
\end{remark}

\section{Hybrid Estimation: System Identification Framework}\label{sec:framework}

In this section, we provide a general framework to recover the true system via the batch and the streaming estimation approaches. 
In particular, we use the estimated Markov parameter matrix obtained via the approaches in Sections \ref{sec:l2est} and \ref{sec:subg}, respectively, and concurrently recover $A,B,C,D$ from $G^* = [D~ CB ~ CAB ~ \dots ~ CA^{k-2}B]$. This is a nonconvex problem, resulting in infinitely many solutions up to a similarity transformation. To address this issue, it turns out that the balanced truncation can be recovered up to order $\lfloor\frac{k}{2}\rfloor$ from $G^*$ via the Ho-Kalman algorithm \cite{hokalman1966kalman}. The analysis consists of constructing Hankel matrix and the resulting $d$-order balanced truncated model defined below. 

\medskip 

\begin{definition}[Hankel Matrix]\label{hankeldef}
    The $(\alpha, \beta)$-dimensional Hankel matrix for $(A,B,C)$ is defined as 
    \[
    \mathcal{H}_{\alpha,\beta}=  \begin{bmatrix}
        CA^\alpha B & CA^{\alpha+1}B & \cdots & CA^{\alpha+\beta-1}B \\ CA^{\alpha+1}B & CA^{\alpha+2}B & \cdots & CA^{\alpha+\beta}B &  \\ \vdots & \vdots & \ddots &\vdots \\ CA^{\alpha+\beta-1}B & CA^{\alpha+\beta}B & \cdots& CA^{\alpha+2\beta-2}B
    \end{bmatrix}. 
    \]
\end{definition}


\begin{definition}[$d$-order balanced truncated model]\label{dorder}
    Let the singular value decomposition (SVD) of the matrix $\mathcal{H}_{0,\infty}$ be given as $U\Sigma V^T$, where $\Sigma\in \mathbb{R}^{n\times n}$ is a diagonal matrix with singular values $\sigma_1\geq \dots\geq \sigma_n\geq 0$. Then for any $d\in \{1,\dots n\}$, the $d$-order balanced truncated model is defined as
    \begin{subequations}
         \begin{align*}
        &C^{(d)} = (U\Sigma^{1/2})_{[1:r], [1:d]}, ~ B^{(d)} = (\Sigma^{1/2}V^T)_{[1:d], [1:m]} \\ & A^{(d)} = (U\Sigma^{1/2})_{[1:\infty], [1:d]}^{\dagger} (U\Sigma^{1/2})_{[r+1:\infty], [1:d]},
    \end{align*}
    \end{subequations}
    where $(\cdot )^\dagger$ denotes the pseudoinverse and $X_{[n_1:n_2], [m_1:m_2]}$ denotes the submatrix of $X$ consisting of the $n_1^\text{th}$ to the $n_2^\text{th}$ rows and the $m_1^\text{th}$ to the $m_2^\text{th}$ columns.
\end{definition}

We use an estimated Markov parameter matrix obtained in the previous sections to produce 
the estimates $\hat A^{(d,t)}, \hat B^{(d,t)}, \hat C^{(d,t)}, \hat D^{(t)}$ to   
recover $A^{(d)}, B^{(d)}, C^{(d)}, D$ at each time $t$. To be specific, the first $r\times m$ submatrix of the estimated Markov parameter matrix will exactly be $\hat D^{(t)}$; the next submatrices will be the estimations of $CB, CAB, \dots, CA^{k-2}B$. Then, one can obtain the estimated $d$-order balanced truncated model via Definition \ref{dorder} by replacing each component of the Hankel matrix with estimations of $CB, CAB, \dots, CA^{k-2}B$, using the zero matrix for the components $CA^{k-1} B , CA^{k}B, \dots$, and perform SVD. 

The works \cite{sarkar2021finite, oymak2022revisit} provided an analysis that the estimations of the balanced truncated model are close enough to the true model if the true Hankel matrix and the estimated Hankel matrix are close enough. Thus, it suffices to show that the accuracy of the estimated Hankel matrix. 
Algorithm \ref{Algorithm 4} integrates batch and streaming estimation approaches, by using either batch or streaming estimates of the Markov parameter matrix, whichever is more suitable at each time.
More precisely, recall that there exists a recovery time $T^*$ such that the $\ell_2$-norm estimator achieves an estimation error of $O(\rho^k)$; for $T<T^*$, Algorithms \ref{Algorithm 1}, \ref{Algorithm 2}, or \ref{Algorithm 3}  yield errors with an exponentially decaying term in $k$ plus a term in $t$ that decays exponentially or polynomially. 
The following theorem provides the estimation error bound for the Hankel matrix obtained from Algorithm \ref{Algorithm 4}, where Line \ref{line3} implements Algorithm \ref{Algorithm 3}. 
The result readily extends to the case in which Algorithm \ref{Algorithm 1} or \ref{Algorithm 2} is invoked in place of Algorithm \ref{Algorithm 3}.


\begin{algorithm}[!t]
  \caption{System Identification and Adaptive Control Framework using batch and streaming estimates}

 \begin{flushleft}
        \textbf{Input:} Recovery time $T^*$, given by \eqref{mintime},  required for the $\ell_2$-norm estimator.  Initial Markov matrix estimate $G^{(0)} = 0$. The balanced truncated model order $d\leq \lfloor \frac{k}{2}\rfloor$. 
  \end{flushleft}

  \begin{algorithmic}[1]
    \FOR{Time $t=1,2,\dots,T^*$}
        
        \IF{$t<T^*$}
            \STATE Use Algorithm \ref{Algorithm 1}, \ref{Algorithm 2}, or \ref{Algorithm 3} depending on the available information and obtain $G^{(t)}$.\alglinelabel{line3}
        \ELSE
            \STATE Solve the $\ell_2$-norm estimator \eqref{l2est} to obtain $\hat G^*_{T^*}$, 
            
        \ENDIF   
\STATE Construct the estimated Hankel matrix $\mathcal{\hat H}^{(t)}$ using $G^{(t)}$ (if $t<T^*$) or $\hat G^*_{T^*}$ (if $t=T^*$). Each entry is formed from the available estimates of the Markov parameters ($CB, \dots, CA^{k-2}B$), while unavailable components ($CA^{k-1}B, CA^{k}B, \dots$) are set to zero.

        \STATE        
        Derive the estimated balanced truncated model $\hat A^{(d,t)},\hat B^{(d,t)},\hat C^{(d,t)}, \hat D^{(t)}$ from the SVD of the estimated Hankel matrix $\mathcal{\hat H}^{(t)}$. 

        \STATE Based on $\hat A^{(d,t)}, \hat B^{(d,t)},\hat C^{(d,t)}, \hat D^{(t)}$, synthesize a robust adaptive controller for stability/performance guarantees.\alglinelabel{line9}
        
    \ENDFOR 
  \end{algorithmic}

  \label{Algorithm 4}
\end{algorithm}

\begin{theorem}\label{hankelthm}

Suppose that all the assumptions of Theorem \ref{theorem1} hold.
Consider the variant of Algorithm \ref{Algorithm 4} that invokes  Algorithm \ref{Algorithm 3} at Line \ref{line3}.
When $t<T^*$, the estimation errors of $D$ and the Hankel matrix, respectively, are given as
\begin{align*}
& \bullet ~\mathbb{E}[\|D-\hat D^{(t)}\|_2] =  O_{\rho, k, t}\biggr(\frac{k^{1/4}\rho^{k/2}}{\sqrt{1-\rho}}+\frac{k}{\sqrt{t}} \biggr), 
 \\& \bullet~ \mathbb{E}[\|\mathcal{H}_{0,\infty} -  \mathcal{\hat H}^{(t)}\|_2] =O_{\rho, k, t}\biggr(\frac{ \rho^{ k/2} }{1-\rho^2}+\frac{k^{3/4}\rho^{k/2}}{\sqrt{1-\rho}}+\frac{k^{3/2}}{\sqrt{t}} \biggr),
\end{align*} 
where $O_{\rho,k,t}(\cdot)$ denotes the big-$O$ notation hiding all factors independent of $\rho,k,t$. Moreover, when $t=T^*$, we have 
\begin{align*}
& \bullet ~\|D-\hat D^{(t)}\|_2 =  O_{\rho, k, t}\biggr(\frac{\rho^{k}}{1-\rho}\biggr), 
 \\& \bullet~ \|\mathcal{H}_{0,\infty} -  \mathcal{\hat H}^{(t)}\|_2 =O_{\rho, k, t}\biggr(\frac{ \rho^{ k/2} }{1-\rho^2}+\frac{k^{1/2}\rho^k}{1-\rho} \biggr)
\end{align*} 
with probability at least $1-\delta$.
\end{theorem}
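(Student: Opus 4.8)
The plan is to reduce both error bounds to quantities already under control: the batch estimation error $\|G^*-\hat G^*_{T^*}\|_F = O(\rho^{k-1}\nu/(1-\rho))$ from Theorem~\ref{theorem1} (valid with probability at least $1-\delta$), the streaming bound on $\mathbb{E}[\|G^{(t)}-G^*\|_F^2]$ from Theorem~\ref{theoremalgo3}, and a \emph{deterministic} truncation term that depends only on the true system through Assumption~\ref{stability}. For the $D$-estimate this is immediate: $\hat D^{(t)}$ is by construction the leading $r\times m$ block of the estimated Markov parameter matrix, so $\|D-\hat D^{(t)}\|_2\le\|D-\hat D^{(t)}\|_F\le\|G^*-\hat G^{(t)}\|_F$ (respectively $\le\|G^*-\hat G^*_{T^*}\|_F$ at $t=T^*$). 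At $t=T^*$ this gives the stated $O(\rho^k/(1-\rho))$ directly from Theorem~\ref{theorem1}; at $t<T^*$ I would apply Jensen's inequality $\mathbb{E}[X]\le\sqrt{\mathbb{E}[X^2]}$ together with $\sqrt{a+b}\le\sqrt a+\sqrt b$ to Theorem~\ref{theoremalgo3}, after substituting $E=O(k)$ and absorbing $\nu,R,m,1-2q$ into the $O_{\rho,k,t}$ notation, so that the $\frac{\rho^{k-1}\nu}{1-\rho}\cdot\frac{R\sqrt{mk}}{1-2q}$ part contributes $\frac{k^{1/4}\rho^{k/2}}{\sqrt{1-\rho}}$ and the $\frac{kE}{t+k}$ part contributes $\frac{k}{\sqrt t}$.

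For the Hankel matrix I would split $\mathcal{H}_{0,\infty}-\hat{\mathcal{H}}^{(t)}=P_1+P_2$, where $P_1$ collects the block errors sitting on the anti-diagonals $i+j-2\le k-2$ — the indices $\ell$ for which an estimate of $CA^\ell B$ is available — and $P_2$ is the restriction of $\mathcal{H}_{0,\infty}$ to the anti-diagonals $i+j-2\ge k-1$ (the blocks that are zeroed out). To bound $\|P_1\|_2$, note that the block-Hankel matrix carrying a single error block $\Delta_\ell:=CA^\ell B-\widehat{CA^\ell B}$ on one anti-diagonal is a block permutation of a block-diagonal matrix with repeated block $\Delta_\ell$, hence has spectral norm $\|\Delta_\ell\|_2\le\|\Delta_\ell\|_F$; summing over the at most $k$ relevant anti-diagonals and applying Cauchy--Schwarz gives $\|P_1\|_2\le\sqrt{k}\,\big(\sum_\ell\|\Delta_\ell\|_F^2\big)^{1/2}\le\sqrt{k}\,\|G^*-\hat G^{(t)}\|_F$, since $\sum_\ell\|\Delta_\ell\|_F^2\le\|G^*-\hat G^{(t)}\|_F^2$. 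Plugging in Theorem~\ref{theoremalgo3} (again via $\mathbb{E}[X]\le\sqrt{\mathbb{E}[X^2]}$) for $t<T^*$, respectively Theorem~\ref{theorem1} for $t=T^*$, yields the $\frac{k^{3/4}\rho^{k/2}}{\sqrt{1-\rho}}+\frac{k^{3/2}}{\sqrt t}$ and $\frac{k^{1/2}\rho^k}{1-\rho}$ contributions.

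The genuinely new estimate is $\|P_2\|_2$, which I expect to be the main obstacle. The key observation is that the truncated region $\{(i,j):i+j-2\ge k-1\}$ is contained in $\{i\ge\lceil(k+1)/2\rceil\}\cup\{j\ge\lceil(k+1)/2\rceil\}$, so $P_2$ decomposes (as a sum of two matrices obtained by zeroing out the complementary blocks) into a piece supported on high block-rows and a piece on high block-columns. Using the factorization $\mathcal{H}_{0,\infty}=\mathcal{O}_\infty\mathcal{C}_\infty$ with $\mathcal{O}_\infty=[C^T\;(CA)^T\;(CA^2)^T\;\cdots]^T$ and $\mathcal{C}_\infty=[B\;AB\;A^2B\;\cdots]$, and the fact that zeroing block-rows or block-columns cannot increase the spectral norm, I would bound $\|P_2\|_2\le\|\mathcal{O}_{\ge h}\|_2\|\mathcal{C}_\infty\|_2+\|\mathcal{O}_\infty\|_2\|\mathcal{C}_{\ge h}\|_2$ with $h=\lceil(k+1)/2\rceil$, where $\mathcal{O}_{\ge h}$ keeps only the block rows $CA^h,CA^{h+1},\dots$ and $\mathcal{C}_{\ge h}$ only the block columns $A^hB,A^{h+1}B,\dots$. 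Each tail operator norm is then controlled by the stacked-matrix inequality $\|[M_1^T\;M_2^T\;\cdots]^T\|_2^2=\|\sum_i M_i^TM_i\|_2\le\sum_i\|M_i\|_2^2$ together with Assumption~\ref{stability}'s $\|A^t\|_2\le\psi\rho^t$, turning each into a geometric series in $\rho^2$ that starts at index $h\ge k/2$; this is precisely where the $\frac{\rho^{k/2}}{1-\rho^2}$ term — with its halved exponent and $1-\rho^2$ denominator — originates. It is essential here to use the square-summed spectral-norm bound rather than a triangle inequality over anti-diagonals, since the latter would only produce $\rho^{k-1}/(1-\rho)$ and, more importantly, the wrong exponent in $\rho$; getting this step right is the technical heart of the theorem.

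Finally I would combine: adding the two bounds on $\|P_1\|_2$ and $\|P_2\|_2$ gives the Hankel error, the $D$ error is as above, the $t=T^*$ statements inherit the ``with probability at least $1-\delta$'' qualifier verbatim from Theorem~\ref{theorem1}, and the $t<T^*$ statements are in expectation through Theorem~\ref{theoremalgo3}. The only remaining work is routine bookkeeping: tracking which constants ($\nu$, $R$, $\psi$, $m$, $1-2q$) are hidden in $O_{\rho,k,t}(\cdot)$, substituting $E=O(k)$, and splitting $\sqrt{a+b}$ into $\sqrt a+\sqrt b$ to land on the stated $\max$-type (equivalently, sum-type) orders.
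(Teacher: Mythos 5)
Your proposal is correct and follows essentially the same route as the paper's proof: reduce both claims to the Markov-matrix errors (Theorem~\ref{theorem1} at $t=T^*$ with probability $1-\delta$, Theorem~\ref{theoremalgo3} via Jensen and $\sqrt{a+b}\le\sqrt a+\sqrt b$ for $t<T^*$), read the $D$-bound off the leading $r\times m$ block, and split the Hankel error into an estimated part carrying a $\sqrt k$ factor times the Markov-matrix error and a truncation tail of order $\rho^{k/2}/(1-\rho^2)$ coming from the observability/controllability factorization with square-summed spectral norms. The only genuine difference is the choice of truncation region, and it is where your write-up needs one repair. The paper compares $\mathcal H_{0,\infty}$ with the zero-padding $\bar{\mathcal H}_{0,d}$ of its leading $d\times d$ block ($d=\lfloor k/2\rfloor$), so the discarded region $\{i>d\}\cup\{j>d\}$ is a union of \emph{full} block rows and block columns, both identifiable with $\mathcal H_{d,\infty}=[C^T\ (CA)^T\ \cdots]^T A^d\,[B\ AB\ \cdots]$ up to deleting rows or columns; the contraction-under-deletion step is then literal and yields $\sqrt2\,\psi^3\rho^d\|C\|_2\|B\|_2/(1-\rho^2)$. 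Your anti-diagonal region $\{i+j-2\ge k-1\}$ is \emph{not} such a union: within a block row $i\ge h$ your $P_2$ additionally zeroes the entries with $j<k+1-i$, and these extra zeros do not form full rows or columns of $\mathcal O_{\ge h}\mathcal C_\infty$, so the inequality $\|P_2\|_2\le\|\mathcal O_{\ge h}\|_2\|\mathcal C_\infty\|_2+\|\mathcal O_\infty\|_2\|\mathcal C_{\ge h}\|_2$ does not follow from ``zeroing block rows/columns is a contraction'' alone. It is rescued by the tool you already invoke: bound each block row of $P_2$ separately (zeroing columns inside a single block row is right-multiplication by a $0/1$ projection, hence $\|P_{2,i}\|_2\le\|CA^{i-1}[B\ AB\ \cdots]\|_2\le\psi^2\rho^{i-1}\|C\|_2\|B\|_2(1-\rho^2)^{-1/2}$) and then apply the stacked-matrix inequality $\|M\|_2^2\le\sum_i\|M_i\|_2^2$ over $i\ge h$, which recovers the $\rho^{k/2}/(1-\rho^2)$ tail. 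Everything else---your permutation argument for $\|P_1\|_2\le\sqrt k\,\|G^*-\tilde G^{(t)}\|_F$ (the paper gets the same $\sqrt d$ factor by viewing the block rows of $\bar{\mathcal H}_{0,d}-\hat{\mathcal H}^{(t)}$ as submatrices of $G^*-\tilde G^{(t)}$), the substitution $E=O(k)$, and the bookkeeping of which statements hold in expectation versus with probability $1-\delta$---matches the paper.
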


\begin{proof}
For convenience, let $\tilde G^{(t)}$   be $G^{(t)}$ for all $t<T^*$, and $\hat G^*_{T^*}$ for $t=T^*$. Then, from Theorem \ref{theoremalgo3}, we have
\begin{align}\label{beforeT}
    &\mathbb{E}[\|G^* - \tilde G^{(t)}\|_2]=O_{\rho, k, t}\biggr(\frac{k^{1/4}\rho^{k/2}}{\sqrt{1-\rho}} + \frac{k}{\sqrt{t}}\biggr), \quad \forall t < T^*,
\end{align}
which follows from $\|\cdot \|_2 \leq \|\cdot \|_F$ and Jensen's inequality. From Theorem \ref{theorem1}, we also have 
\begin{align}\label{afterT}
    \|G^*-\tilde G^{(t)}\|_2 =O_{\rho,k, t}\biggr(\frac{\rho^k}{1-\rho}\biggr), \quad t=T^*,
\end{align}
with probability at least $1-\delta$. 
Considering these relations, it now suffices to bound $\|D-\hat D^{(t)}\|_2$ and $\|\mathcal{H}_{0,\infty} -  \mathcal{\hat H}^{(t)}\|_2 $ in terms of $\|G^*- \tilde G^{(t)}\|_2$.

   First, note that $D$ and $\hat D^{(t)}$ correspond to the first $r\times m$ submatrices of $G^*$ and  $\tilde G^{(t)}$, respectively. Hence, we have $\|D-\hat D^{(t)}\|_2 \leq \|G^*- \tilde G^{(t)}\|_2$, which immediately provides the estimation error bound for $D$.

    Now, for the estimation error of the Hankel matrix, let $d:=\lfloor \frac{k}{2}\rfloor$ and     
    define $\mathcal{\bar H}_{0,d}$ as the zero-padded matrix of $\mathcal{H}_{0,d}$, where the right and bottom parts are extended infinitely with zeros, with $\mathcal{H}_{0,d}$ as its leading principal submatrix. Then, by the triangle inequality, we have
    \begin{align*}
        \|\mathcal{H}_{0,\infty} -  \mathcal{\hat H}^{(t)}\|_2\leq \underbrace{\|\mathcal{H}_{0,\infty} -  \mathcal{\bar H}_{0,d}\|_2}_{[\text{Term~I}]}+\underbrace{\|\mathcal{\bar H}_{0,d} -  \mathcal{\hat H}^{(t)}\|_2}_{[\text{Term~II}]}.
    \end{align*}
    For $[\text{Term~I}]$, note that 
\begin{equation*}
\mathcal{H}_{0,\infty} -\mathcal{\bar { H}}_{0,d} =\begin{bmatrix}
    0 & H_{12}\\ H_{21}&H_{22}
\end{bmatrix},
\end{equation*}
where $\begin{bmatrix}
    H_{21}&H_{22}
\end{bmatrix}=\begin{bmatrix}
    H_{12}\\H_{22}
\end{bmatrix}=\mathcal{H}_{d,\infty}$. Considering that the squared spectral norm of a matrix is bounded by the sum of the squared spectral norms of its submatrices, we have 
\begin{align*}
    \|&\mathcal{H}_{0,\infty} -\mathcal{\bar { H}}_{0,d} \|_2 \leq(\|\begin{bmatrix}
    H_{21}&H_{22}
\end{bmatrix}\|_2^2 + \|H_{12}\|_2^2)^{1/2}\\&\hspace{3mm}\leq 
\sqrt{2} \|\mathcal{H}_{d,\infty}\|_2 = \sqrt{2}\left\|
\begin{bmatrix}
    C \\CA \\\vdots
\end{bmatrix}A^d
\begin{bmatrix}
    B &AB&\cdots
\end{bmatrix}
\right\|_2\\&\hspace{3mm}\leq \sqrt{2} \bigr(\sum_{i=0}^\infty \|CA^{i}\|_2^2\bigr)^{1/2}\cdot \|A^d\|_2 \cdot \bigr(\sum_{i=0}^\infty \|A^{i}B\|_2^2\bigr)^{1/2}\\&\hspace{3mm}\leq \sqrt{2}\psi^3 \frac{ \rho^d \|C\|_2  \|B\|_2 }{1-\rho^2}\numberthis\label{firstbound}
\end{align*}
For $[\text{Term~II}]$, note that each block matrix consisting of rows $(i-1)r+1$ to $ir$ of $  \mathcal{\bar H}_{0,d} - \mathcal{\hat H}^{(t)}$ for $i=1, \dots, d$ is a submatrix of $G^* - \tilde G^{(t)}$ by the construction of Hankel matrices. 
Thus, we have
\begin{equation}\label{secondbound}
    \|\mathcal{\bar H}_{0,d} - \mathcal{\hat H}^{(t)}\|_2 \leq \sqrt{d} \|G^* - \tilde G^{(t)}\|_2.
\end{equation}
Recall that $d=\lfloor \frac{k}{2}\rfloor$, and integrate \eqref{firstbound} and \eqref{secondbound} to yield
\begin{align*}
    \|\mathcal{H}_{0,\infty} -  \mathcal{\hat H}^{(t)}\|_2 =O_{\rho, k, t}\biggr(\frac{\rho^{k/2}}{1-\rho^2} + \sqrt{k}\|G^*-\tilde G^{(t)}\|_2\biggr), 
\end{align*}
Applying \eqref{beforeT} to this bound for $t<T^*$ and \eqref{afterT} for $t=T^*$ completes the proof.
\end{proof}

\begin{remark}
 The proof of Theorem \ref{hankelthm} establishes that the estimation errors for both $D$ and the Hankel matrix are affinely proportional to $\|G^*-\tilde G^{(t)}\|_2$; \textit{i.e.}, the estimation error of the Markov parameter matrix at each time $t$.  Note that in Algorithm \ref{Algorithm 4}, we leverage the \textit{streaming estimation} at the initial stage ($t<T^*$) to guarantee control over the expected $\|G^*-\tilde G^{(t)}\|_2$, which decreases gradually over time $t$ via Algorithms \ref{Algorithm 1}, \ref{Algorithm 2}, or \ref{Algorithm 3}, thereby ensuring that the estimation error of the Hankel matrix remains appropriately bounded at earlier times. 
     When the time exceeds $T^*$, which is required for the batch data the $\ell_2$-norm estimator uses, the estimation error of the Hankel matrix arrives at $O(\rho^{k/2} + k^{1/2}\rho^k)$ with high probability, indicating that the error ultimately decays exponentially as $k$ grows under the \textit{batch estimation} regime.
Unlike the streaming approach leveraged for $t<T^*$, batch estimation typically incurs significant computational overhead when performed repeatedly, whereas Algorithm \ref{Algorithm 4} requires computing the $\ell_2$-norm estimator only once and incurs negligible overhead, since the estimator achieves the best achievable error with high probability for all $t\geq T^*$.
\end{remark}

\begin{remark}
Line \ref{line9} of Algorithm \ref{Algorithm 4} highlights that we can synthesize a controller based on the estimated balanced truncated model. 
We note that the accuracy of this system identification  contributes to designing a well-performed controller. In particular, when the model mismatch is sufficiently small, the adaptive control approach—designing a controller based on the estimated model and applying it to the true system—is often effective.
This principle has direct implications for stability. Given an adaptive controller designed with a positive stability margin,  the stability of the true closed-loop system is still guaranteed if the model error is smaller than this margin.
Similarly for performance guarantee, even though an adaptive controller will yield suboptimal performance when the model has a small error, the performance loss (\textit{e.g.}, suboptimality gap) remains bounded within an acceptable limit. Detailed analysis of robust stability and robust performance under small model perturbations can be found in the literature on the robust control theory (see \cite{zhourobust}).
\end{remark}

\section{Numerical Experiments}\label{sec:numexp}
In this section, we provide numerical experiments to demonstrate our theoretical findings in the previous sections. 

\begin{example}[Batch estimation]\label{example1}
    In this example, we illustrate the results in Section \ref{sec:l2est}, and show how the $\ell_2$-norm estimator outperforms the least-squares method and the $\ell_1$-norm estimator. Recall that Theorem \ref{theorem1} shows that the $\ell_2$-norm estimator shows an estimation error of $O(\rho^k)$ when $T= \Omega(k^2 m+ k \log(\frac{1}{\delta}))$; while our preliminary version shows that the $\ell_1$-norm estimator arrives at an estimation error of $O(\sqrt{r} \cdot \rho^k)$ when $T=\Omega(k^2 m+ k \log(\frac{r}{\delta}))$. 
Note that the least-squares method cannot achieve a small enough error unlike the aforementioned two estimators due to the lack of robustness to adversarial attacks.

\begin{figure}[t]
     \centering
     \begin{subfigure}[b]{0.235\textwidth}
         \centering
    \includegraphics[width=\textwidth]{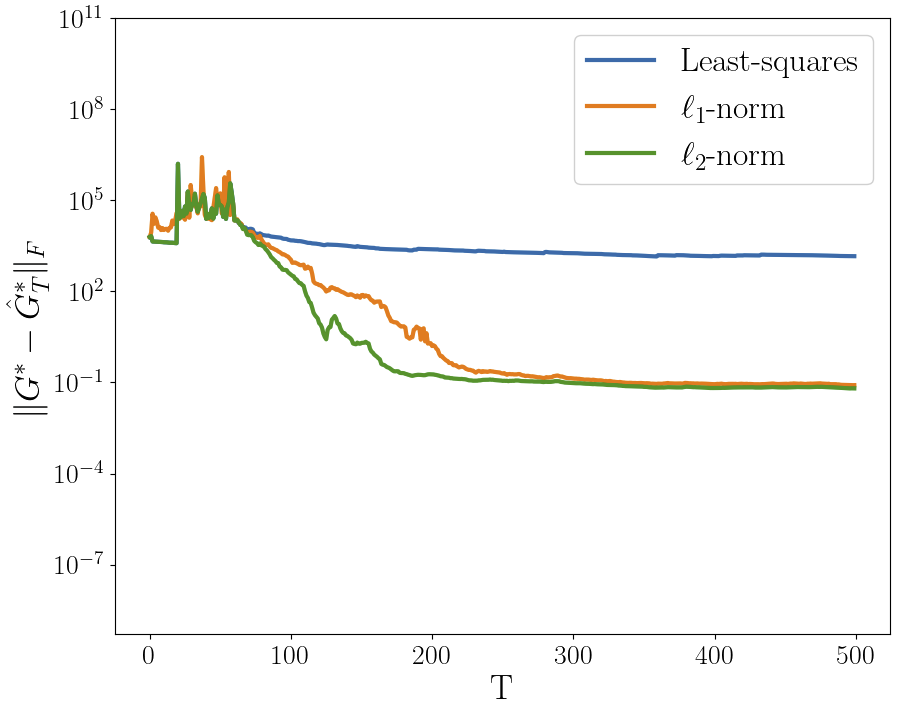}
         \caption{$k=10$}
         \label{k10}
     \end{subfigure}
     \begin{subfigure}[b]{0.235\textwidth}
         \centering
    \includegraphics[width=\textwidth]{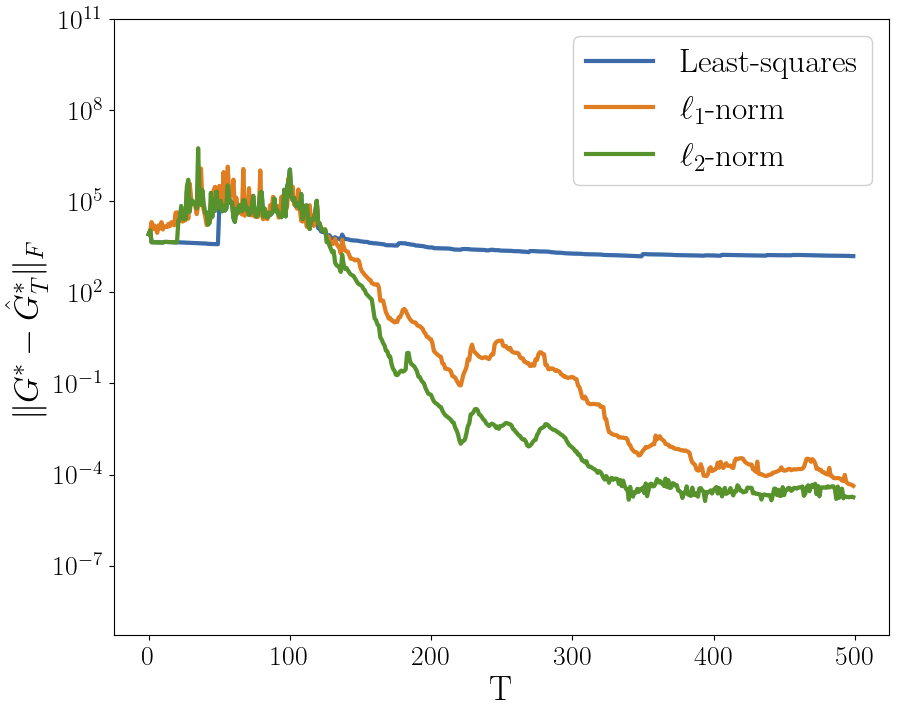}
         \caption{$k=20$}
         \label{k20}
     \end{subfigure}
        \caption{\small{Estimation error for the Markov parameter matrix of order $k=10, 20$ (batch estimation): $\ell_2$-norm estimator vs. $\ell_1$-norm estimator vs. least-squares under adversarial attacks. We report the average runtimes of the experiment of $k=20$ up to $T=500$: $\ell_2$-norm estimator—237.09s, $\ell_1$-norm estimator—269.16s, Least-squares—305.50s.}}
        \label{ex1}
\end{figure}

In the experiments, we 
use $n=300$, $m=6$, $r=9$, and $k=10~\text{or}~20$. We generate the matrix $A$ by selecting all entries from Uniform$[-1,1]$ and rescaled to satisfy $\|A\|_2 = 0.6$. 
The initial state is set to a vector of $1000$s, the control inputs at each time is designed to follow $N(0, 100I_{m})$, and the attack time probability is set to $p=\frac{1}{2k}$ to satisfy  Assumption \ref{null}. The attack $w_t$ is sub-Gaussian with covariance $25I_{n}$ and a mean vector whose entries are either $300$ or $1000$, depending on the sign of the corresponding coordinate of $x_t$. Figure \ref{ex1} shows the estimation error on a log scale over time, where the least-squares method fails to recover the Markov parameter matrix, resulting in an error of around $10^4$. In contrast, the  $\ell_2$-norm and $\ell_1$-norm estimators yield a decreasing error to arrive at a sufficiently small error for both $k=10$ and $20$. However, one can observe that the $\ell_2$-norm estimator outperforms the $\ell_1$-norm estimator in terms of both the recovery time and the ultimate error. Moreover, for both $\ell_2$-norm and $\ell_1$-norm estimators, a larger $k$ results in a smaller error, although a longer time is required for the convergence, which strongly supports Theorem \ref{theorem1}.
\end{example}

\begin{figure}[t]
     \centering
     \begin{subfigure}[b]{0.235\textwidth}
         \centering
    \includegraphics[width=\textwidth]{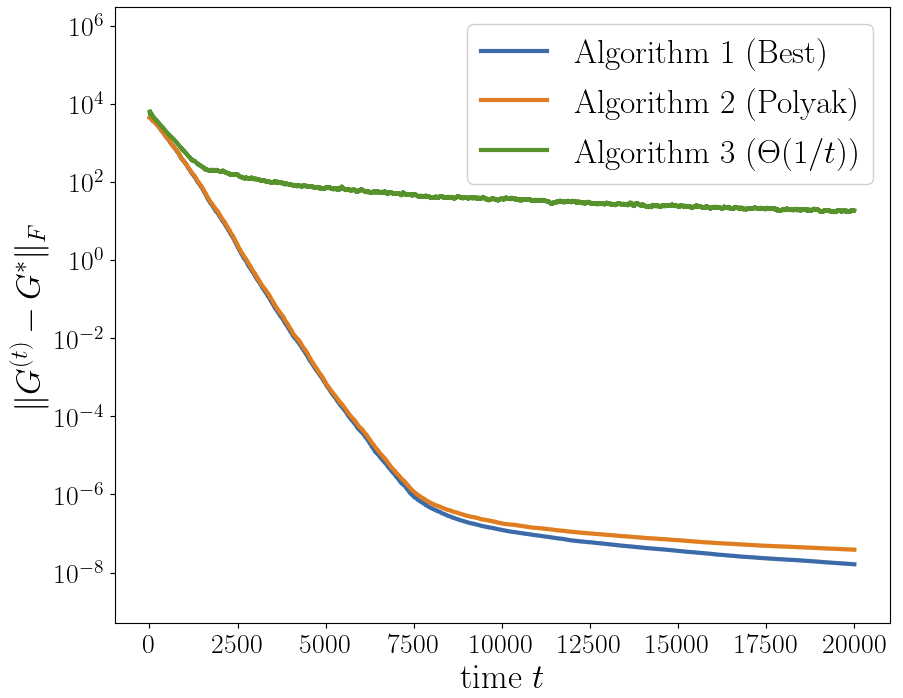}
         \caption{Stochastic  descent algorithm }
         \label{sto}
     \end{subfigure}
     \begin{subfigure}[b]{0.235\textwidth}
         \centering
    \includegraphics[width=\textwidth]{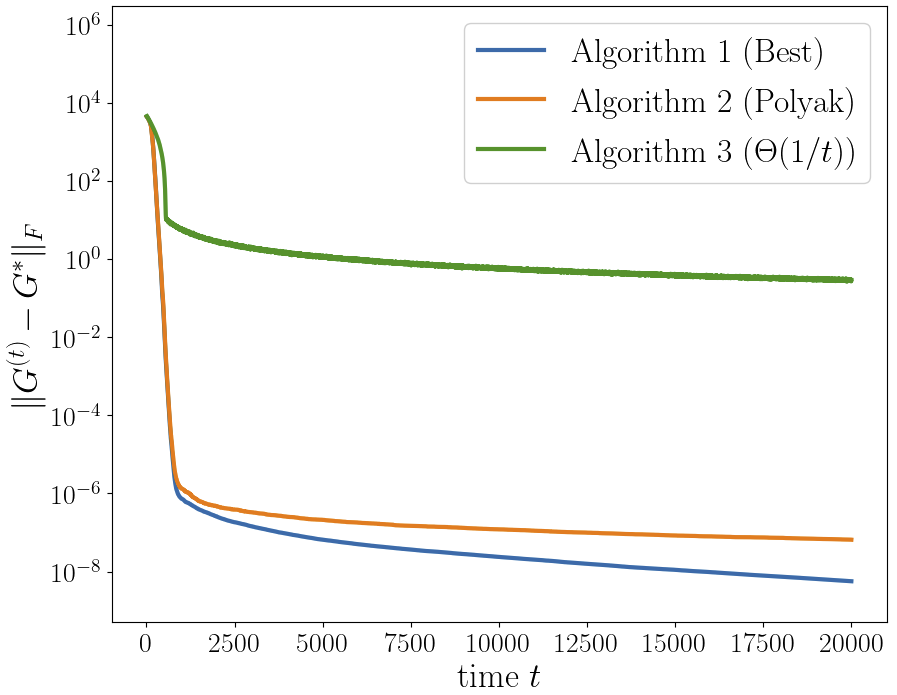}
         \caption{Mini-batch descent algorithm}
         \label{minib}
     \end{subfigure}
        \caption{\small{Estimation error for the Markov parameter matrix of order $k=20$ (streaming estimation): Best step size vs. Polyak step size vs. $\Theta(1/t)$ under adversarial attacks. We report the average runtimes of running all three step size rules up to $t=20000$: Stochastic—15.20s, Mini-batch—22.99s.}}
        \label{ex2}
\end{figure}

\begin{example}[Streaming estimation]
    In this example, we illustrate the results in Section \ref{sec:subg}, and show how the stochastic gradient descent can provide the estimates at all time steps. The experimental settings are identical to those in Example \ref{example1}, except that here we present results only for $k=20$. 
    First, we run a stochastic subgradient descent using Algorithms \ref{Algorithm 1}, \ref{Algorithm 2}, and \ref{Algorithm 3}. Recall that Algorithms \ref{Algorithm 1} and \ref{Algorithm 2} assume knowledge of the true Markov parameter matrix $G^*$, whereas Algorithm \ref{Algorithm 3} does not. In Figure \ref{sto}, all three algorithms demonstrate a decreasing estimation error, with the rate of decrease clearly improving as more information is available. Algorithms \ref{Algorithm 1} and \ref{Algorithm 2} achieve linear convergence, while Algorithm \ref{Algorithm 3} still shows a decrease consistent with sublinear convergence $O(k/\sqrt{t})$ despite the lack of knowledge of $G^*$. 
    
Then, we additionally run a mini-batch subgradient descent as discussed in Remark \ref{manybatchremark}, applying all three algorithms with a batch size of $100$. At each time $t$, we randomly select $\min\{t,100\}$ observation trajectory samples from time steps $0, \dots, t$ to compute the average of the $\ell_2$-norm terms, rather than using a single $\ell_2$-norm term.
The descent rate improves remarkably with the increased batch size. Interestingly, although the runtime of the mini-batch approach is theoretically $100$ times that of the stochastic approach, vectorization reduces this overhead to only $1.5$ times. This suggests that the mini-batch approach is preferable over the stochastic approach despite the slight increase in computational cost.

We also note that the computational runtimes reported above are significantly lower than those of the batch estimators; in particular, the batch estimation experiments were conducted up to time $500$, whereas the streaming estimation experiments were run up to time $20000$ and still required much less time.
Moreover, the batch estimation in Figure \ref{ex1} does not consistently show a decreasing estimation error, especially at an initial stage, indicating that its estimates are unreliable at early times. In contrast, the streaming estimation in Figure \ref{ex2} (particularly \ref{minib}) demonstrates a decreasing behavior from the beginning. Accordingly, the \textit{batch and streaming estimation approaches should be integrated}: the streaming approach should be initially leveraged to ensure both early-time descent and manageable computational complexity, and the batch approach can be applied once the system designer is confident that the recovery time of the $\ell_2$-norm estimator has been reached.
\end{example}

\section{Conclusion}\label{sec:conclusion}

In this paper, we aim to obtain an accurate identification of the Markov parameter matrix of order $k$, denoted by $G^*$, where the probability of having an attack at each time is $O(1/k)$. First, we study the $\ell_2$-norm estimator in terms of control inputs and observations, and show that there exists a finite time after which the estimator uses the accumulated batch data to achieve an estimation error that decays exponentially with $k$. Second, we analyze the stochastic projected subgradient descent method for the streaming data and provide theoretical guarantees on the expected estimation error at all times $t$, using a step size of $\Theta(1/t)$. The resulting error is bounded by the larger of $O(k/\sqrt{t})$ and an exponentially decreasing term in $k$ when little is known about $G^*$. We further analyze the case where prior information of $G^*$ is available and demonstrate the estimation error of $O(\max\{\exp(-t/k^2), \rho^k\})$, although this represents an idealized scenario.
This stochastic algorithm provides computational efficiency and yields estimates whose error bounds decrease over time in an initial stage, even when 
the data has not yet been sufficiently collected, whereas the $\ell_2$-norm estimator requires a minimum amount of data to work effectively but achieves the best error once the required data is available.
These provable estimation errors and operational properties of both batch and streaming approaches carry over to the estimation error of the Hankel matrix and the balanced truncations of the true system. This work addresses the possibility of integrating streaming and batch estimations to synthesize a robust adaptive controller at any point during the system identification process.

\appendix

\subsection{Proof of Lemma \ref{gauscon}}\label{prooflemma1}

\begin{proof}
   When $f(X)$ has a Lipschitz constant of $L$, then the function $\tilde f(X):= f(\mu + \sigma X)$ has a Lipschitz constant of $\sigma L$ in the sense that 
\begin{align*}
   | f(\mu + \sigma X_1 ) - f(\mu + \sigma X_2 )| &\leq L\| (\mu + \sigma X_1) - (\mu + \sigma X_2)\| \\&= \sigma L \|X_1 -X_2\|. 
\end{align*}
Given that $X\sim N(0, I_M)$, we can apply Theorem 5.2.3 in \cite{vershynin2025high} to $\tilde f(X)$ and arrive at
\begin{align*}
    \|\tilde f(X) - \mathbb{E}[\tilde f(X)]\|_{\psi_2} =O(\sigma L).
\end{align*}
Observing that $\tilde f(X)$ has the same distribution with $f(\mathbf{U})$ completes the proof. 
\end{proof}

\subsection{Proof of Lemma \ref{sumxt}}\label{prooflemma2}

\begin{proof}
Note that $\|\|x_0\|_2 \|_{\psi_2} \leq \eta, \|\|w_t\|_2\|_{\psi_2} \leq \eta, $ and $\|\|u_t\|_2\|_{\psi_2} \leq \sigma\sqrt{m}$  follows from Assumption \ref{maxnorm} and the Gaussianity of $u_t$. 
In turn, using the system dynamics \eqref{sysd} yields
\begin{align*}
    \|\|x_t&\|_2\|_{\psi_2} = \Bigr\|\Bigr\|A^t x_0 + \sum_{i=0}^{t-1} (A^{t-1-i}Bu_i+A^{t-1-i} w_{i})\Bigr\|_2 \Bigr\|_{\psi_2} \\& \leq \|A^t\|_2\cdot\|\|x_0\|_2\|_{\psi_2} \\&\hspace{5mm}+ \sum_{i=0}^{t-1} \|A^{t-1-i}\|_2 \bigr(\|B\|_2\cdot \|\|u_i\|_2\|_{\psi_2} + \|\|w_i\|_2\|_{\psi_2}\bigr) \\&= \frac{\psi}{1-\rho} \cdot O(\eta + \sigma \sqrt{m} \|B\|_2) 
    \numberthis\label{xtsums}
\end{align*}
due to the triangle inequality and geometric sum with Assumption \ref{stability}. The sub-Gaussian property that $\mathbb{E}[X^2]=O(\|X\|_{\psi_2}^2)$ for a sub-Gaussian variable $X$ then concludes \eqref{expxt}. We can also sum up $\|x_t\|_2$ over $t=0, \dots, T-1$ to yield
\begin{align*}
        &\sum_{t=0}^{T-1} \|x_t\|_2 
        \le\sum_{i=0}^{\infty} \|A^i\|_2   \Bigr[\|x_0\|_{2} + \sum_{t=0}^{T-2} (\|w_t\|_{2} +  \|Bu_t\|_{2})\Bigr]\\&\hspace{16mm}\leq\frac{\psi}{1-\rho}\Bigr[\|x_0\|_{2} + \sum_{t=0}^{T-2} (\|w_t\|_{2} +  \|B\|_2\|u_t\|_{2})\Bigr].
    \end{align*}
     This incurs that $\|\sum_{t=0}^{T-1} \|x_t\|_2 \|_{\psi_2} \leq \frac{\psi(\eta+\sigma\sqrt{m}\|B\|_2)\sqrt{T}}{1-\rho}$ using the filtration $\mathcal{F}_t$ and the independence between $u_t$'s. 
    Applying the centering lemma from \cite{vershynin2025high}—which states that the sub-Gaussian norm of a centered sub-Gaussian variable is bounded by a constant multiple of the sub-Gaussian norm of the original variable—together with Hoeffding's inequality yields
    \begin{align*}
        &\mathbb{P}\Bigr( \sum_{t=0}^{T-1} \|x_t\|_2-\mathbb{E}\Bigr[\sum_{t=0}^{T-1} \|x_t\|_2\Bigr] \leq \frac{(\eta+\sigma \sqrt{m}\cdot \|B\|_2)T}{1-\rho}   \Bigr) \\&\hspace{45mm} \geq 1-\exp(-\Omega(T)).\numberthis\label{xt123}
    \end{align*}
   From \eqref{xtsums}, we have 
    $\mathbb{E}\bigr[\sum_{t=0}^{T-1} \|x_t\|_2\bigr]=O\bigr(\frac{(\eta+\sigma \sqrt{m}\cdot \|B\|_2)T}{1-\rho}\bigr)$. Applying this upper bound to \eqref{xt123} completes the proof. 
\end{proof}

\renewcommand*{\bibfont}{\small}
\printbibliography

@book{bertsekas1995dynamic,
  title={Dynamic Programming and Optimal Control, Two Volume Set},
  author={Bertsekas, Dimitri P},
  year={2012},
  publisher={Athena scientific, MA},
  edition = {4th}
}

@article{kalman1960new,
  title={A new approach to linear filtering and prediction problems},
  author={Kalman, Rudolf E.},
  journal={Journal of Basic Engineering},
  volume={82},
  number={1},
  pages={35--45},
  year={1960},
  publisher={ASME}
}

@article{yalcin2025subgradient,
  title={Subgradient Method for System Identification with Non-Smooth Objectives},
  author={Baturalp Yalcin and Jihun Kim and Javad Lavaei},
  journal={arXiv preprint arXiv:2503.16673},
  year={2025}
}

@article{kim2025system,
  title={System Identification from Partial Observations under Adversarial Attacks},
  author={Jihun Kim and Javad Lavaei},
  journal={arXiv preprint arXiv:2504.00244},
note={to appear in \textit{Conference on Decision and Control}, IEEE, 2025.}
}

@article{zhang2024exact,
  title={Exact Recovery Guarantees for Parameterized Nonlinear System Identification Problem under Sparse Disturbances or Semi-Oblivious Attacks},
  author={Haixiang Zhang and Baturalp Yalcin and Javad Lavaei and Eduardo D. Sontag},
journal={Transactions on Machine Learning Research},
  year={2025},
 publisher={JMLR}
}

@book{zhourobust,
  title={Robust and optimal control},
  author={Kemin Zhou and John C. Doyle and Keith Glover},
  year={1996},
  publisher={Prentice Hall}
}

@article{bottou2018optimization,
  title={Optimization methods for large-scale machine learning},
  author={Bottou, L{\'e}on and Curtis, Frank E and Nocedal, Jorge},
  journal={SIAM Review},
  volume={60},
  number={2},
  pages={223--311},
  year={2018},
  publisher={SIAM}
}

@book{boyd2004convex,
  title={Convex Optimization},
  author={Stephen Boyd and Lieven Vandenberghe},
  year={2004},
  publisher={Cambridge University Press}
}

@book{nesterov2004intro,
  title={Introductory Lectures on Convex Optimization: A Basic Course},
  author={Yurii Nesterov},
  year={2004},
  publisher={Springer New York}
}

@book{bubeck2015convex,
  title={Convex Optimization: Algorithms and Complexity},
  author={Bubeck, S{\'e}bastien},
  year={2015},
  publisher={Now Publishers Inc.},
  series={Foundations and Trends{\textregistered} in Machine Learning},
  volume={8},
  number={3-4},
  pages={231--357}
}

@article{sarkar2021finite,
  title={Finite Time {LTI} System Identification},
  author={Tuhin Sarkar and Alexander Rakhlin and Munther A. Dahleh},
  journal={Journal of Machine Learning Research},
  volume={22},
  number={26},
  pages={1--61},
  year={2021}
}

@inproceedings{sarkar2019near,
  title={Near optimal finite time identification of arbitrary linear dynamical systems},
  author={Tuhin Sarkar and Alexander Rakhlin},
  booktitle={International Conference on Machine Learning},
  pages={5610--5618},
  year={2019}
}

@inproceedings{oymak2019stochastic,
  title     = {Stochastic Gradient Descent Learns
State Equations with Nonlinear Activations},
  author    = {Samet Oymak},
  booktitle = {Annual Conference on Learning Theory},
  pages     = {1--29},
  year      = {2019},
publisher={PMLR}
}

@inproceedings{jain2021sgdrer,
  title     = {Streaming Linear System Identification with Reverse Experience Replay},
  author    = {Jain, Prateek and Kowshik, Suhas and Nagaraj, Dheeraj and Netrapalli, Praneeth},
  booktitle = {Advances in Neural Information Processing Systems},
  pages     = {30140--30152},
  year      = {2021}
}

@article{hardt2018gradient,
  title={Gradient descent learns linear dynamical systems},
  author={Moritz Hardt and Tengyu Ma and Benjamin Recht},
  journal={Journal of Machine Learning Research},
  volume={19},
  number={29},
  pages={1--44},
  year={2018},
  publisher={PMLR}
}

@article{kalman1961new,
  title={New results in linear filtering and prediction theory},
  author={Rudolf E. Kalman and Richard S. Bucy},
  journal={Journal of Basic Engineering},
  volume={83},
  number={1},
  pages={95--108},
  year={1961},
  publisher={ASME}
}

@article{hokalman1966kalman,
  title={Effective construction of linear state-variable models from input/output functions},
  author={Bin-Lun Ho and Rudolf E. Kalman},
  journal={Automatisierungstechnik},
  volume={14},
  number={112},
  pages={545--548},
  year={1966},
  publisher={De Gruyter}
}

@book{vershynin2025high,
  title={High-Dimensional Probability: An Introduction with Applications in Data Science},
  author={Roman Vershynin},
  year={2025},
    edition={2},
  publisher={Cambridge University Press}
}

@inproceedings{jedra2020finite,
  title={Finite-time Identification of Stable Linear Systems Optimality of the Least-Squares Estimator},
  author={Jedra, Yassir and Proutiere, Alexandre},
  booktitle={Conference on Decision and Control},
  year={2020},
  organization={IEEE}
}

@inproceedings{simchowitz2018learningwithout,
  title={Learning Without Mixing: Towards A Sharp Analysis of Linear System Identification},
  author={Max Simchowitz and Horia Mania and Stephen Tu and Michael I. Jordan and Benjamin Recht},
  booktitle={Conference On Learning Theory},
  pages={439--473},
  year={2018}
}

@article{fawzi2014secure,
  title={Secure Estimation and Control for Cyber-Physical Systems Under Adversarial Attacks},
  author={Hamza Fawzi and Paulo Tabuada and Suhas Diggavi},
  journal={IEEE Transactions on Automatic Control},
  volume={59},
  number={6},
  pages={1454--1467},
  year={2014},
  publisher={IEEE}
}

@article{shoukry2016event,
  title={Event-Triggered State Observers for Sparse Sensor Noise/Attacks},
  author={Yasser Shoukry and Paulo Tabuada},
  journal={IEEE Transactions on Automatic Control},
  volume={61},
  number={8},
  pages={2079--2091},
  year={2016},
  publisher={IEEE}
}

@article{pajic2017attack,
  title={Attack-Resilient State Estimation for Noisy Dynamical Systems},
  author={Miroslav Pajic and Insup Lee and George J. Pappas},
  journal={IEEE Transactions on Control of Network Systems},
  volume={4},
  number={1},
  pages={82--92},
  year={2017},
  publisher={IEEE}
}

@article{kim2024landscape,
  title={The Landscape of Deterministic and Stochastic Optimal Control Problems: One-Shot Optimization Versus Dynamic Programming},
  author={Jihun Kim and Yuhao Ding and Yingjie Bi and Javad Lavaei},
  journal={IEEE Transactions on Automatic Control},
  volume={69},
  number={12},
  pages={8587--8602},
  year={2024},
  publisher={IEEE}
}

@article{candes2005decoding,
  title={Decoding by Linear Programming},
  author={Emmanuel J. Cand\`{e}s and Terence Tao},
journal={IEEE Transactions on Information Theory},
  year={2005},
volume={51},
number={12},
pages={4203--4215}
}

@article{oymak2022revisit,
  title={Revisiting {H}o–{K}alman-Based System Identification: Robustness and Finite-Sample Analysis},
  author={Samet Oymak and Necmiye Ozay},
  journal={IEEE Transactions on Automatic Control},
  volume={67},
  number={4},
  pages={1914--1928},
  year={2022},
  publisher={IEEE}
}

@article{pasqualetti2013cyber,
  title={Attack Detection and Identification in Cyber-Physical Systems},
  author={Fabio Pasqualetti and Florian D\"{o}rfler and Francesco Bullo},
  journal={IEEE Transactions on Automatic Control},
  volume={58},
  number={11},
  pages={2715--2729},
  year={2013},
  publisher={IEEE}
}

@article{li2023disturbance,
  title={Disturbance decoupled secure state estimation: An orthogonal projection-based method},
  author={Jinghao Li and Guang-Hong Yang},
  journal={Automatica},
  volume={147},
  articleno = {110740},
  year={2023},
  publisher={Elsevier}
}

@inproceedings{kim2024prevailing,
  title={Prevailing against Adversarial Noncentral Disturbances: Exact Recovery of Linear Systems with the $ \ell_1$-norm Estimator},
  author={Kim, Jihun and Lavaei, Javad},
  booktitle={American Control Conference (ACC)},
  pages={1161--1168},
  year={2025},
  organization={IEEE}
}

@article{kim2025sharp,
  title={On the Sharp Input-Output Analysis of Nonlinear Systems under Adversarial Attacks},
  author={Jihun Kim and Yuchen Fang and Javad Lavaei},
  journal={arXiv preprint arXiv:2505.11688},
  year={2025}
}

@article{kim2025revisit,
  title={Revisiting the Geometrically Decaying Step Size: Linear Convergence for Smooth or Non-Smooth Functions},
  author={Jihun Kim},
  journal={arXiv preprint arXiv:2508.13569},
  year={2025}
}

@article{yalcin2024exact,
  title={Exact recovery for system identification with more corrupt data than clean data},
  author={Yalcin, Baturalp and Zhang, Haixiang and Lavaei, Javad and Arcak, Murat},
  journal={IEEE Open Journal of Control Systems},
  year={2024},
  publisher={IEEE}
}


\begin{IEEEbiography}[{\includegraphics[width=1in,height=1.25in,clip,keepaspectratio]{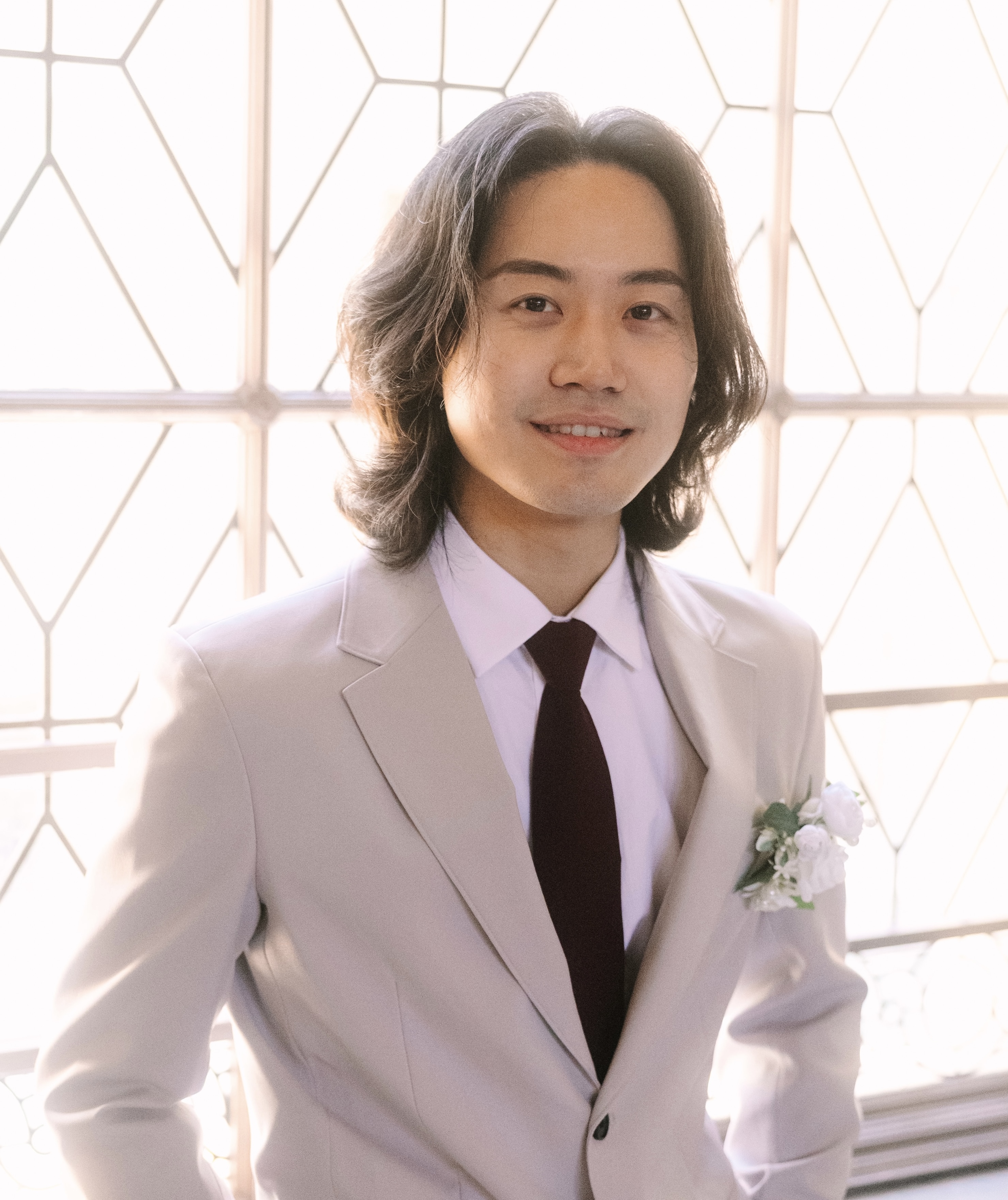}}]{Jihun Kim} (Student Member, IEEE) is a Ph.D. Candidate in Industrial Engineering and Operations Research at University of California, Berkeley, CA, USA. He obtained the B.S. degree in both Industrial Engineering and Statistics from Seoul National University in 2022. His research interests include theoretical foundations in optimization, control, machine learning, and system identification, particularly in the context of dynamical systems under attack and their applications to safety-critical systems.
\end{IEEEbiography}

\begin{IEEEbiography}[{\includegraphics[width=1in,height=1.25in,clip,keepaspectratio]{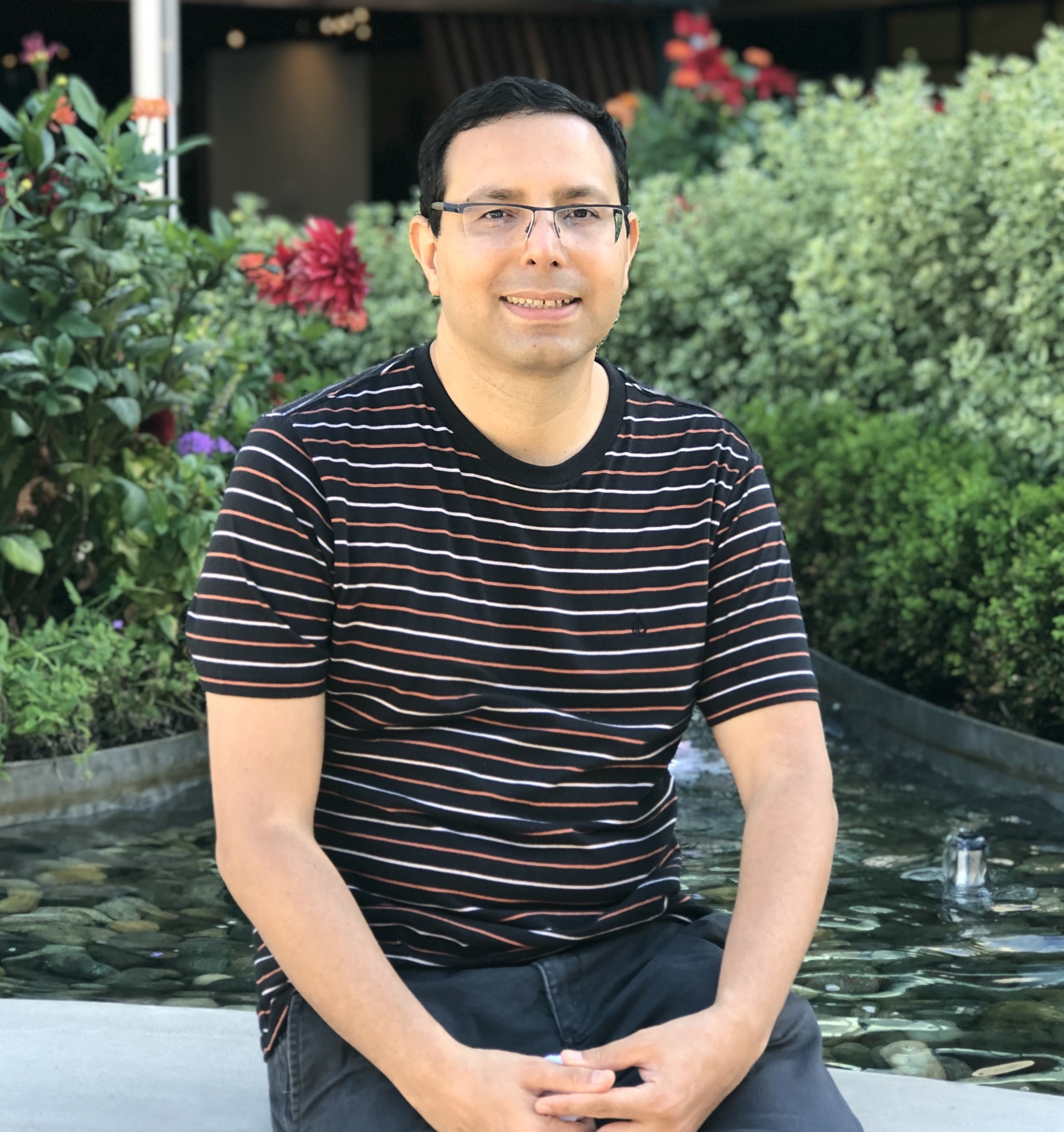}}]{Javad Lavaei} (Fellow, IEEE) is an Associate Professor in the Department of Industrial Engineering and Operations Research at UC Berkeley. He obtained the Ph.D. degree in Control \& Dynamical Systems from California Institute of Technology. He is a senior editor of the IEEE Systems Journal and has served on the editorial boards of the IEEE Transactions on Automatic Control, IEEE Transactions on Control of Network Systems, IEEE Transactions on Smart Grid, and IEEE Control Systems Letters. 
\end{IEEEbiography}

\end{document}